\documentclass[reqno]{amsart}
\usepackage{amsaddr}
\usepackage[english]{babel}
\usepackage[utf8]{inputenc} 

\usepackage{amsfonts,amssymb,amsmath,amsthm}
\numberwithin{equation}{section}
\usepackage{dsfont}
\usepackage{multirow}
\usepackage{url}
\usepackage{xcolor} 
\usepackage{graphics}
\usepackage{booktabs}
\usepackage{epsfig}
\usepackage{epstopdf}
\usepackage{psfrag}                
\usepackage{mathrsfs}
\usepackage{mathtools}
\mathtoolsset{showonlyrefs}    
\usepackage{subfigure}
\usepackage{nicefrac}
\usepackage{bm}
\usepackage{diagbox}
\usepackage{caption}
\usepackage{algorithm}
\usepackage{algpseudocode}
\usepackage{paralist}
\usepackage[shortlabels,inline]{enumitem}

\usepackage[pdftex,
	pdftitle={Optimized multilevel Monte Carlo methods\\ in Banach spaces},
	pdfauthor={K.~Kirchner, F.~Nobile, Ch.~Schwab, and T.~Vanzan},
	bookmarksopen,
	colorlinks,
	linkcolor=black,
	urlcolor=black,
	citecolor=black
	]{hyperref}

\newcommand{\w}{\omega}
\newcommand{\dprime}{\prime\prime}

\newcommand{\pprime}{p^\prime}
\newcommand{\refe}{\text{ref}}
\newcommand{\tensorW}{\otimes_\varepsilon^2 W^{1,p}_{\left\{0\right\}}(I)}
\newcommand{\Wdual}{W^{-1,p'\!}_{\left\{0\right\}}(I)}
\newcommand{\lb}{\bm{l}}
\newcommand{\cb}{\bm{c}}
\newcommand{\ub}{\bm{u}}
\newcommand{\vb}{\bm{v}}
\newcommand{\hb}{\bm{h}}
\newcommand{\eb}{\bm{e}}
\newcommand{\cIt}{\widetilde{\mathcal{I}}}
\newcommand{\cTt}{\widetilde{\mathcal{T}}}
\newcommand{\triplenorm}[1]{\left\lvert\!\left\lvert\!\left\lvert #1 \right\rvert\!\right\rvert\!\right\rvert}
\newcommand{\Biggtriplenorm}[1]{\Biggl\lvert\!\Biggl\lvert\!\Biggl\lvert #1 \Biggr\rvert\!\Biggr\rvert\!\Biggr\rvert}

\newcommand{\bbE}{\mathbb{E}}
\newcommand{\bbN}{\mathbb{N}}
\newcommand{\bbP}{\mathbb{P}}
\newcommand{\bbR}{\mathbb{R}}
\newcommand{\bbZ}{\mathbb{Z}}

\newcommand{\bbM}{\mathbb{M}}
\newcommand{\E}{\mathbb{E}}
 
\newcommand{\cA}{\mathcal{A}}
\newcommand{\cB}{\mathcal{B}}
\newcommand{\cC}{\mathcal{C}}

\newcommand{\cI}{\mathcal{I}}

\newcommand{\cS}{\mathcal{S}}
\newcommand{\cT}{\mathcal{T}}

\newcommand{\Omegat}{\widetilde{\Omega}} 
\newcommand{\omegat}{\widetilde{\omega}} 
\newcommand{\cAt}{\widetilde{\cA}} 
\newcommand{\bbPt}{\widetilde{\bbP}} 
\newcommand{\bbEt}{\widetilde{\bbE}} 
\newcommand{\Nht}{\widetilde{N}_h}
\newcommand{\wOmega}{\widetilde{\Omega}}
\newcommand{\wA}{\widetilde{\mathcal{A}}}
\newcommand{\wP}{\widetilde{\mathbb{P}}}
\newcommand{\weta}{\widetilde{\eta}}

\newcommand{\norm}[2]{     \| #1       \|_{ #2 }}

\newcommand{\rd}{\mathop{}\!\mathrm{d}}
\newcommand{\from}{\colon} 
 
\newcommand{\indk}{\kappa} 

\newtheorem{lemma}{Lemma}[section]
\newtheorem{proposition}[lemma]{Proposition}
\newtheorem{theorem}[lemma]{Theorem}
\newtheorem{corollary}[lemma]{Corollary} 

\theoremstyle{remark}
\newtheorem{remark}[lemma]{Remark}

\theoremstyle{definition}
\newtheorem{definition}[lemma]{Definition}
\newtheorem{assumption}[lemma]{Assumption}
\newtheorem{example}[lemma]{Example}

\begin{document}

\author[K.~Kirchner, F.~Nobile, Ch.~Schwab, and T.~Vanzan]{Kristin Kirchner$^{{\lowercase\mathrm{a}},{\lowercase\mathrm{b}}}$,
		Fabio Nobile$^{{\lowercase\mathrm{c}}}$,
		Christoph Schwab$^{{\lowercase\mathrm{d}}}$
                \\
                \and 
		Tommaso Vanzan$^{{\lowercase\mathrm{e}}}$}

\address[Kristin Kirchner]{
	$\,^{\lowercase\mathrm{a}}$Department of Mathematics, 
	KTH Royal Institute of Technology\\
	Lindstedtsv\"agen 25, 
	114$\,$28 Stockholm, Sweden} 

\address[Kristin Kirchner]{
	$\,^{\lowercase\mathrm{b}}$Delft Institute of Applied Mathematics, 
	Delft University of Technology\\
	P.O.~Box 5031,  2600$\,$GA Delft, 
	The Netherlands}

\address[Fabio Nobile]{
	$\,^{\lowercase\mathrm{c}}$Institute of Mathematics, 
	EPF Lausanne \\
	Route Cantonale, 1015 Lausanne,  
	Switzerland}

\address[Christoph Schwab]{
	$\,^{\lowercase\mathrm{d}}$Seminar for Applied Mathematics, 
	ETH Z\"urich \\
	R\"amistrasse 101, 8092 Z\"urich,  
	Switzerland}
	
\address[Tommaso Vanzan]{
	$\,^{\lowercase\mathrm{e}}$Dipartimento di Scienze Matematiche, 
	Politecnico di Torino \\
  	Corso Duca degli Abruzzi 24, 10129 Torino,  
	Italy}


\title[Optimized multilevel Monte Carlo methods in Banach spaces]{
	Optimized multilevel Monte Carlo methods\\ in Banach spaces}


\keywords{Banach space valued random variable, injective tensor product, Monte
Carlo estimation, multilevel methods, Rademacher averages, type of Banach space.}

\subjclass[2020]{Primary
			 65C05; 
			  Secondary  46A32, 
			  60H25, 
			  65N22.
			    }  

\date{\today}


\begin{abstract}
We present a comprehensive theoretical and numerical analysis of 
Monte Carlo methods for the estimation of statistical moments of 
random variables $X\colon\Omega\rightarrow E$ 
taking values in a Banach space $E$. For practical computation,
we consider finite-dimensional approximation subspaces ${(E_\ell)_{\ell\in\bbN}\subset E}$
of increasing dimension.
We  develop a refined error analysis that explicitly accounts for  
a dependence of the Rademacher type 
constants on the dimension of~$E_\ell$, leading to 
novel complexity results for 
single- and multilevel Monte Carlo methods 
to estimate the mean and injective moments 
of arbitrary order, which are, in certain 
cases, sharper than those derived in~\cite{KKChS2024}.
Moreover, we show that, in favorable cases, 
the resulting error-vs.-work bounds are independent of the Rademacher type of $E$. 

We then focus on $L^p(S)$-valued 
random variables for a $\sigma$-finite 
measure space $(S,\cS,\mu)$ 
satisfying certain approximation properties,
and 
prove that for a random variable
$X\in L^q(\Omega;L^p(S))\cap L^p(S;L^q(\Omega))$, 
with $q\in (1,\infty)$ and $p\in [1,\infty)$, 
the $L^q$-convergence rate of a multilevel Monte Carlo estimator 
is determined exclusively 
by the integrability parameter $\min\{q,2\}$, 
with no dependence on the 
Rademacher type $\min\{p,2\}$ of $L^p(S)$. 
We further investigate the impact of measuring the 
(multilevel) Monte Carlo error in the $L^q(\Omega;L^p(S))$-norm 
while $X$ possesses additional regularity, 
$X\in L^{\widetilde{q}}(\Omega;L^p(S))\cap L^p(S;L^{\widetilde{q}}(\Omega))$ 
with $\widetilde{q}\in [q,\infty)$.  
This analysis reveals an interplay between the sampling error
and the strong approximation error, 
and leads to optimized error–vs.-work bounds
for both single- and multilevel Monte Carlo methods.

Numerical experiments confirm the sharpness 
of the analyses presented in estimating 
both first and second moments.
\end{abstract}

\maketitle
 
\section{Introduction} 
Estimating statistical moments of random variables taking values 
in infinite-dimensional vector spaces is a fundamental challenge in the 
field of uncertainty quantification. 
This setting naturally arises in mathematical models involving 
random or stochastic differential equations, 
where the solution itself is not a finite-dimensional vector, 
but instead an element of an appropriate function space.
While the first moment, that is the mean, is often 
itself the quantity of interest, 
higher-order moments provide additional information 
about the probability distribution of the random variable, 
and can be relevant in statistical tests \cite{mardia1970measures}.

To estimate moments, a widely used approach is the 
Monte Carlo method. 
It is well-known that for a random variable $X\in L^2(\Omega;H)$, where $H$ is a Hilbert space and 
$(\Omega,\cA,\bbP)$ denotes a complete probability space, 
the Monte Carlo method to approximate the mean converges 
with the rate $\nicefrac{1}{2}$ in the number of samples. 
This result extends to the estimation 
of statistical moments of any order $k\in \bbN$, 
viewed as elements of the Hilbert tensor product space $H^{(k)}\!$, 
provided that ${X\in L^{2k}(\Omega; H)}$.
However, this picture drastically changes 
when the random variable has i) reduced integrability 
or ii) takes values in a Banach space $E$. 
The convergence rate indeed depends on both the integrability 
of the random variable and on 
geometric properties of the Banach space $E$. 
In rigorous terms, if $E$ is of \emph{Rademacher type} $p\in [1,2]$ (see \cite[Chapter 7]{AnalysisInBanachSpacesII2017})  
and $X\in L^q(\Omega;E)$, with $q\in [1,\infty)$, 
then the Monte Carlo method to approximate 
the mean converges in $L^q$ at the rate $1-\frac{1}{\min\left\{p,q\right\}}$, 
see e.g.\ \cite[Proposition 9.11]{LedouxTalagrand2011} 
and \cite[Corollary~3.15]{KKChS2024}. 
The extension of this result to arbitrary $k$th moments 
is not straightforward since, 
in contrast to the Hilbertian setting, 
there is no canonical choice for the norm 
on the algebraic tensor product space $\otimes^k E$. 
Provided that $X\in L^{kq}(\Omega;E)$, Kirchner and Schwab~\cite{KKChS2024} 
have recently proved the Monte Carlo convergence rate 
$1-\frac{1}{\min\left\{p,q\right\}}$ 
in the \emph{injective tensor norm}, 
which is the weakest crossnorm, 
and showed that the Monte Carlo method 
does in general not converge 
in the \emph{projective tensor norm}, 
which is the strongest crossnorm, 
see \cite[Chapter 6]{ryan2002introduction}. 
In practice, the convergence rate 
$1-\frac{1}{\min\left\{p,q\right\}}$ can be discouraging, 
since to achieve an error of order~$\epsilon$  
it suggests to use asymptotically 
$\epsilon^{-\max \{p'\!,\, q'\}}$ samples, 
where $p'$ and $q'$ are the H\"older conjugates of $p$ and $q$. 
This could be a much larger sample size 
than the asymptotic $\epsilon^{-2}$ many samples 
which are needed for a random variable $X\in L^2(\Omega;H)$, 
and it could become rapidly unaffordable 
as $p$ or $q$ decrease.
We note that while it is quite immediate to verify 
the sharpness of the rate $1-\frac{1}{\min\left\{p,q\right\}}$ 
with respect to~$q$ (take, e.g., the real-valued random variable 
$u(\omega):=\omega^{-\frac{1}{q+\delta}}$, $\delta>0$, 
which lies in $L^q((0,1);\bbR)$, and estimate 
$\int_0^1 u(\omega)\rd \w$ using the Monte Carlo method), 
to the best of the authors' knowledge, 
there are no numerical experiments yet 
confirming the sharpness of 
the rate with respect to the Rademacher type $p$; 
an aspect that will be addressed in this work.

When solving differential equations, 
numerical methods yield finite-dimensional approximations 
of the random variable $X$. 
As the corresponding additional discretization error 
has to be taken into account for the overall accuracy, 
this gives rise to more sophisticated sampling methods, 
which leverage the availability of 
several \emph{levels} of approximations, 
each of different accuracy but also of different computational cost. 
A vast literature has been devoted in the last two decades to 
MultiLevel Monte Carlo (MLMC) methods, 
starting from the seminal works of Heinrich 
in the context of numerical integration \cite{heinrich2001multilevel} 
and of Giles for 
stochastic differential equations \cite{giles2008multilevel}. 
More recently, a multi-index variant that permits 
to consider multiple discretization parameters 
simultaneously has been proposed in~\cite{haji2016multi}.
Single- and multilevel Monte Carlo methods 
have been extensively analyzed in the Hilbertian case 
to approximate the mean of the solution for  
partial differential equations with random coefficients, 
random initial data or random boundary 
conditions~\cite{barth2011multi, 
	cliffe2011multilevel, 
	graham2016mixed, 
	lord2014introduction, 
	mishra2012sparse, 
	teckentrup2013further} 
and for stochastic differential 
equations \cite{CoxEtAl2021, 
	giles2006improved, 
	giles2013analysis, 
	higham2015introduction,
	xia2012multilevel}.
Concerning higher-order moments, \cite{bierig2015convergence,bierig2016estimation} 
rely on multilevel strategies to 
approximate diagonals of central statistical moments, 
while \cite{barth2011multi,chernov2023simple, mishra2012sparse} 
explore sparse tensor methods to approximate full moments.
With regard to Banach space valued random variables, 
the MLMC method applied to a 
random degenerate convection diffusion equation
is analyzed in~\cite{koley2017multilevel}, 
while an abstract analysis of single- and multilevel Monte Carlo 
methods for both the mean and arbitrary injective $k$th moments 
is discussed in~\cite{KKChS2024}. 
In particular, it is shown in \cite{KKChS2024}  
that the Rademacher type of $E$ generally determines 
the Monte Carlo convergence rate of injective moments 
of arbitrary order; 
consequently, 
the number of samples to use on each level
for MLMC estimation is chosen accordingly. 

\subsection{Contributions}
 
The contributions of this manuscript are threefold.

Our first contribution relies on the observation that 
numerical methods yield approximations that belong 
to finite-dimensional subspaces 
$(E_\ell)_{\ell\in\bbN}\subseteq E$. 
It is known that a finite-dimensional normed space is of 
any Rademacher type ${r\in [1,2]}$, 
but with Rademacher \emph{type constants} 
that may depend on the dimension of the subspace. 
We therefore revisit the analysis of~\cite{KKChS2024}
by taking into account the dependence 
of these type constants on the dimension 
of the approximation space, and we derive 
refined single- and multilevel Monte Carlo error estimates 
for the first moment in the $L^q(\Omega;E)$-norm 
using the Rademacher type property 
for any $r\in [1,\min\{q,2\}]$. 
By further optimizing the error bound over $r$, 
we obtain new complexity results, which we further extend to 
arbitrary injective $k$th moments provided that $X\in L^{kq}(\Omega;E)$.
In the most favorable case, 
the new analysis reveals that 
the complexity of single- and multilevel Monte Carlo methods 
is \emph{independent} of the Rademacher type $p$ of $E$, 
and depends exclusively on the integrability~$q$ 
of the random variable, 
and on additional parameters describing 
the growth of the type constants with respect to the dimension.

Our second contribution is devoted to 
$L^p(S)$-valued random variables which are central 
in many applications. 
Here, $(S,\cS,\mu)$ is 
a $\sigma$-finite measure space 
which allows to approximate functions 
in $L^p(S)$ by simple functions based on 
integral averages on disjoints subsets in~$\cS$. 
This property is satisfied, e.g., by 
any (not necessarily bounded) Borel 
subset $S\subseteq \bbR^d$. 
We show that for a random variable 
${X\in L^q(\Omega;L^p(S))}$,  
$q\in(1,\infty)$ and $p\in[1,\infty)$, 
satisfying the additional integrability assumption 
$X\in L^p(S;L^q(\Omega))$, i.e., 
the pointwise $q$th moments 
are $L^{\nicefrac{p}{q}}(S)$-integrable, 
the convergence rate of the Monte Carlo first moment estimator, 
\emph{even at the continuous level}, 
is exclusively determined by the integrability parameter $\min\{q,2\}$, 
and thus independent of the Rademacher type $\min\{p,2\}$. 
In particular, this guarantees convergence of the 
Monte Carlo method for a class of 
$L^1(S)$-valued random variables 
which is not covered by the 
standard analysis based on the Rademacher type. 
Furthermore, motivated by numerical experiments, 
we examine the setting in which the Monte Carlo error
is measured in the $L^q(\Omega;L^p(S))$-norm, 
while $X$ possesses
additional regularity characterized 
by a parameter $\widetilde{q}$,
that is,
$~X\in L^{\widetilde{q}}(\Omega;L^p(S))\cap L^p(S;L^{\widetilde{q}}(\Omega))$
with $\widetilde{q}\in [q,\infty)$.
This analysis reveals a trade-off in the error estimates 
between the decay rate of the sampling error 
and of the strong approximation error and ultimately 
leads to the derivation of 
new optimized multilevel complexity estimates. 

Finally, our third contribution consists in extensive numerical 
experiments that i) verify the sharpness of 
the convergence rate $1-\frac{1}{\min\left\{p,q\right\}}$ 
with respect to the Rademacher type $p$ at the continuous level, 
ii) verify that for a random variable 
$X\in L^q(\Omega;L^p(S))\cap L^p(S;L^q(\Omega))$ 
the convergence rate at the continuous level is 
$1-\frac{1}{\min\{q,2\}}$ irrespectively of $p$, 
iii) confirm the sharpness of the 
single- and multilevel complexity analyses 
presented in this manuscript 
in approximating both the mean and the second moment.
Although the numerical experiments rely on rather simple 
mathematical models, 
their implementation is very delicate: 
the need to handle random variables 
with low integrability and with values 
in a Banach space leads to pathwise solutions 
with singularities, which in turn demand robust 
and accurate numerical methods.

\subsection{Layout}
The rest of the manuscript is organized as follows. 
Section~\ref{sec:prel} introduces 
the notation and, 
in Subsections~\ref{sec:prel_Rad_types} 
and~\ref{sec:prel_kmoments}, 
recalls the background material on Rademacher types 
and $k$-fold tensor product spaces 
that is relevant for this work. 
Section~\ref{sec:dimension_dependent_constants} is devoted 
to the study of Monte Carlo methods accounting for 
dimension-dependent type constants. 
Subsections~\ref{sec:SLMC} and~\ref{sec:MLMC} 
present optimized complexity results 
for single-level and multilevel Monte Carlo methods for the 
approximation of the first moment. 
Subsection~\ref{sec:SL_MLMC_kmoments} 
extends these analyses to the estimation 
of general $k$th moments. 
Section~\ref{sec:Minkowski} focuses 
on $L^p(S)$-valued random variables 
and presents new complexity results that, 
under additional conditions, 
lead to improved estimates 
for single- and multilevel Monte Carlo methods 
for both first and higher-order moments. 
Section~\ref{sec:numerics} discusses 
two numerical experiments aiming at verifying 
the theoretical results presented in 
Sections~\ref{sec:dimension_dependent_constants} 
and~\ref{sec:Minkowski}.
Section~\ref{sec:Concl} provides the concluding remarks, 
and finally the Appendix details an algorithm for the computation 
of the injective tensor norm, 
which is used in our numerical experiments.

\section{Preliminaries}\label{sec:prel}
\subsection{Notation}\label{sec:prel_notation}
Given parameter sets  
$\mathscr{P},\,\mathscr{Q}$, 
and mappings 
$\mathscr{F},\mathscr{G}\from \mathscr{P}\times\mathscr{Q}\to\bbR$, 
we use the notation  
$\mathscr{F} (p,q) \lesssim_{q} \mathscr{G} (p,q)$ 
to indicate that for each $q \in \mathscr{Q}$ 
there exists a constant ${C_q \in(0,\infty)}$ 
such that 
$\mathscr{F} (p,q) \leq C_q \, \mathscr{G} (p,q)$ 
holds 
for all ${p\in \mathscr{P}}$. 
Whenever both relations, 
$\mathscr{F} (p,q) \lesssim_q \mathscr{G}(p,q)$ 
and $\mathscr{G} (p,q) \lesssim_q \mathscr{F}(p,q)$, 
hold simultaneously, 
we write $\mathscr{F} (p,q) \eqsim_q \mathscr{G} (p,q)$.   

We denote by $(E,\norm{\,\cdot\,}{E})$ a Banach space 
over~$\bbR$,  
$B_E:= \{x\in E : \norm{x}{E} \leq 1 \}$ its closed unit ball, 
and 
$\cB(E)$ the Borel $\sigma$-algebra 
on 
$(E,\norm{\,\cdot\,}{E})$.
The dual space 
of all continuous linear functionals 
$g \from E \to \bbR$ 
is denoted by $E'$.  
We write $g(x)$ or $\langle g, x \rangle$ 
for the duality pairing between $g\in E'$ 
and $x\in E$, and 
$\|g\|_{E'} := \sup_{x\in B_E} | g(x) |$ 
for the norm on $E'$. 

For a measure space $(S,\cS,\mu)$ 
and $p\in [1,\infty)$, 
$L^p(S;E):=L^p(S,\cS,\mu;E)$ 
denotes  
the space of (equivalence classes of) 
$E$-valued, strongly measurable, 
$p$-integrable functions on $S$, 
equipped with the norm 
\[
	\|f\|_{L^p(S;E)} := 
	\left(\int_S \|f(s)\|_E^p \, \rd \mu(s)\right)^{\nicefrac{1}{p}}.
\]
Whenever the functions are real-valued,  
we abbreviate 
$L^p(S):=L^p(S;\bbR)$. The indicator function of a measurable subset $A$ of $S$ is denoted by $\mathbf 1_A$.
For a second measure space  
$(\widetilde{S},\widetilde{\cS},\widetilde{\mu})$, 
$(S\times\widetilde{S},\cS\otimes\widetilde{\cS},
\mu\otimes\widetilde{\mu})$ 
denotes 
the product measure space, i.e.,  
$S\times\widetilde{S}$ is the 
set of all tuples $(s,\widetilde{s})$ with  
$s\in S$, $\widetilde{s}\in\widetilde{S}$,
$\cS\otimes\widetilde{\cS}$ is the product 
$\sigma$-algebra generated by 
all sets of the form $A\times\widetilde{A}$ with $A\in\cS$, 
$\widetilde{A}\in\widetilde{\cS}$, 
and $\mu\otimes\widetilde{\mu}$ is the uniquely defined 
product measure satisfying 
$(\mu\otimes\widetilde{\mu})(A\times\widetilde{A})
= 
\mu(A)\widetilde{\mu}(\widetilde{A})$ 
for all $A\in\cS$,  
$\widetilde{A}\in\widetilde{\cS}$. 
For $p\in [1,\infty)$, $k\in \bbN$, 
and for an open subset 
$D$ of $\bbR^d$, $d\in\bbN$, 
$W^{k,p}(D)$ is the Sobolev space
 containing all elements 
in $L^p(D) = L^p(D,\cB(D),\lambda_d;\bbR)$, 
$\lambda_d$ being the Lebesgue measure, 
whose weak derivatives up to order~$k$ 
exist and are in~$L^p(D)$. 
For $p\in [1,\infty)$, 
$\ell^p$ is the Banach space of 
$p$-summable real-valued sequences 
equipped with the norm 
$\|x\|_{\ell^p}
:=
\bigl(\sum_{j=1}^\infty |x_j|^p \bigr)^{\nicefrac{1}{p}}$ 
and, for every $N\in \bbN:=\left\{1,2,\dots\right\}$, 
we denote by $\ell_N^p$ the space $\bbR^N$ 
equipped with the $\ell^p$-norm. 
For $p\in[1,\infty]$, we denote 
by~$p'$ the H\"older conjugate of~$p$, 
i.e., $\tfrac{1}{p} + \tfrac{1}{p'} = 1$, 
using the convention that $\tfrac{1}{\infty}=0$. 
In addition, $\ell^\infty$ is the space of all bounded sequences, $L^\infty(D)$ 
consists of essentially bounded  
strongly measurable functions, 
and $W^{k,\infty}(D)$ consists of functions 
whose weak derivatives up to order $k$ 
are in $L^\infty(D)$.

Throughout this article, we assume that
$(\Omega,\cA,\bbP)$ is 
a complete probability space
with expectation operator $\bbE$, 
and we mark statements which hold 
$\bbP$-almost surely with $\bbP$-a.s. 
We only consider strongly measurable mappings $X\from\Omega\rightarrow E$, 
that is, mappings which are
 i) measurable with respect to the 
 Borel $\sigma$-algebra $\mathcal{B}(E)$ on~$E$, 
 and ii) almost surely separably valued, i.e., 
 $X\in E_0$ holds $\bbP$-a.s.\ for some separable subspace 
 $E_0\subseteq E$. 
 For any strongly measurable map 
 $X\from\Omega\rightarrow E$ 
 further satisfying 
 $\bbE[ \|X\|_E ]<\infty$, 
 the Bochner integral
\[
	\int_\Omega X(\w) \rd\bbP(\w) 
	=:\bbE [ X ] \in E, 
\]
is well-defined 
\cite[Proposition~1.2.2]{AnalysisInBanachSpacesI2016}. 
For any $q\in [1,\infty)$ and real Banach space 
$(E,\norm{\,\cdot\,}{E})$ 
we denote by 
$L^q(\Omega;E)$ the 
Bochner space consisting of all (equivalence classes of) 
$E$-valued strongly measurable random variables 
${\eta\from\Omega\rightarrow E}$ 
such that 
$\bbE\left[ \|\eta\|_E^q\right]<\infty$, 
equipped with the norm 
$\|\eta\|_{L^q(\Omega;E)}
:=
\bigl(\bbE\bigl[ \| \eta \|_E^q \bigr]\bigr)^{\nicefrac{1}{q}}$. 

\subsection{Rademacher types}\label{sec:prel_Rad_types}
In this subsection, we recall the necessary definitions and tools 
to develop a Monte Carlo error analysis in Banach spaces. 
For a comprehensive overview, 
we refer to the monographs 
\cite{AnalysisInBanachSpacesII2017,LedouxTalagrand2011}.
\begin{definition}[Rademacher family]
\label{def:rade-family} 
	Let $(\Omegat,\cAt,\bbPt)$ 
	be a complete probability space and 
	$r_j \from \Omegat\to\{-1,1\}$, $j\in J\subseteq\bbN$, 
	be a family of independent random variables such that  
	$\bbPt(r_j = -1)=\bbPt(r_j = 1)=\tfrac{1}{2}$ for all $j\in J$. 
	Then, the collection of random variables 
	$(r_j)_{j\in J}$ is called a \emph{Rademacher family}.
\end{definition} 
Rademacher families frequently appear 
when estimating  
sums of independent zero-mean 
random variables taking values in 
a Banach space $E$ 
in the norm of $L^q(\Omega;E)$ 
due to the following symmetrization result,
see e.g.\  \cite[Lemma~6.3]{LedouxTalagrand2011}.
\begin{lemma}[Symmetrization]
\label{lem:symmetrization} 
	Let  $q\in[1,\infty)$, $M\in\bbN$, $(r_j)_{j=1}^M$ be 
	a Rademacher family on a complete probability space 
	$(\Omegat,\cAt,\bbPt)$, 
	and let $\eta_1,\ldots,\eta_M\in L^q(\Omegat;E)$ be 
	centered random variables, 
	i.e., $\bbEt[\eta_j]=0$ for $1\leq j \leq M$, 
	taking values 
	in a real Banach space $(E,\norm{\,\cdot\,}{E})$ 
	such that $\eta_1,\ldots,\eta_M, r_1, \ldots, r_M$ are independent. 
	Then, 
	\[
		\biggl\| 
		\sum_{j=1}^M \eta_j 
		\biggr\|_{L^q(\Omegat;E)} 
		\leq 
		2 \, 
		\biggl\| 
		\sum_{j=1}^M r_j \eta_j 
		\biggr\|_{L^q(\Omegat;E)} . 
	\]
\end{lemma}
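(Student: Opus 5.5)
The plan is the classical ghost-sample symmetrization. Enlarge the probability space so that there is an independent copy $(\eta_1',\ldots,\eta_M')$ of $(\eta_1,\ldots,\eta_M)$, independent of the original variables and of the Rademacher family. Write $S:=\sum_{j=1}^M \eta_j$ and $S':=\sum_{j=1}^M \eta_j'$.

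\emph{Step 1 (insert the ghost sample).} Since the $\eta_j'$ are centered, conditioning on $(\eta_j)_{j}$ gives $S = \bbEt[\,S - S' \mid \eta_1,\ldots,\eta_M]$ almost surely. Conditional expectation is a contraction on $L^q(\Omegat;E)$ for $q\in[1,\infty)$; this follows from the conditional Jensen inequality applied to the convex map $x\mapsto\|x\|_E^q$, which is valid for Bochner-integrable $E$-valued variables. Hence
\[
	\|S\|_{L^q(\Omegat;E)} \leq \|S - S'\|_{L^q(\Omegat;E)} .
\]
If one prefers to avoid vector-valued conditional expectations, one can instead use Fubini: fix the $\eta_j$, apply Jensen to $\bbEt'$ in the form $\|\bbEt'[x - S']\|_E^q\le \bbEt'\|x-S'\|_E^q$ with $x=S(\omega)$, and then integrate.

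\emph{Step 2 (symmetry).} The pairs $(\eta_j,\eta_j')_{j=1}^M$ are independent. Each difference $\eta_j-\eta_j'$ is symmetric, since swapping $\eta_j$ and $\eta_j'$ preserves the joint law. Consequently, for every fixed sign vector $\varepsilon\in\{-1,1\}^M$, the vector $(\varepsilon_j(\eta_j-\eta_j'))_j$ has the same law as $(\eta_j-\eta_j')_j$. Averaging over $\varepsilon$ according to the independent Rademacher family and using Fubini yields
\[
	\|S-S'\|_{L^q(\Omegat;E)} = \biggl\|\sum_{j=1}^M r_j(\eta_j-\eta_j')\biggr\|_{L^q(\Omegat;E)} .
\]

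\emph{Step 3 (triangle inequality).} Apply Minkowski's inequality in $L^q(\Omegat;E)$:
\[
	\biggl\|\sum_{j=1}^M r_j(\eta_j-\eta_j')\biggr\|_{L^q(\Omegat;E)}
	\le \biggl\|\sum_{j=1}^M r_j\eta_j\biggr\|_{L^q(\Omegat;E)} + \biggl\|\sum_{j=1}^M r_j\eta_j'\biggr\|_{L^q(\Omegat;E)} .
\]
The two terms on the right are equal, because $(r_j,\eta_j')_j$ and $(r_j,\eta_j)_j$ have the same joint law. This gives the factor $2$.

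The only delicate step is Step 1, namely justifying the Jensen or contraction argument for $E$-valued random variables. Strong measurability, with almost surely separable range, together with $q$-integrability makes the Bochner integrals and Fubini valid. The product space construction also needs to be set up carefully on a complete probability space.
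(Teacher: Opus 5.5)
Your proof is correct and uses the same ghost-sample symmetrization (independent copy plus Jensen, symmetry of $\eta_j-\eta_j'$, then splitting) as the source the paper cites for this lemma, \cite[Lemma~6.3]{LedouxTalagrand2011}; the paper gives no proof of its own. The only cosmetic difference is that the cited argument uses convexity of $t\mapsto t^q$ in the last step instead of Minkowski's inequality, and this yields the same factor $2$.
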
 

There exists an extensive literature 
on Rademacher sums 
(and on more general random sums, 
see e.g.~\cite[Chapter~6]{AnalysisInBanachSpacesII2017} 
for the corresponding analysis in Banach spaces). 
In this context, the following 
Khintchine and Kahane--Khintchine 
inequalities (see~\cite[Lemma~4.1]{LedouxTalagrand2011} 
and~\cite{kahane1985some}) 
play a central role. 
 
\begin{definition}[Khintchine constants]
\label{def:khintchine}
	Let $q\in [1,\infty)$. 
        The Khintchine constants $A_q, B_q\in(0,\infty)$ 
	are the 
	largest, respectively, 
	smallest positive constants such that, 
    for any Rademacher family $(r_j)_{j\in\bbN}$ 
    on a complete probability space~$(\Omegat,\cAt,\bbPt)$ 
    (with expectation $\bbEt$) 
	and any finite sequence $(a_j)_{j=1}^n$ 
	of real numbers, it holds that
	\[ 
		A_q 
		\Biggl( \sum_{j=1}^n a_j^2 \Biggr)^{\nicefrac{1}{2}} 
		\leq 
		\Biggl( \bbEt \Biggl[ \biggl| \sum_{j=1}^n r_j a_j \biggr|^q \Biggr] \Biggr)^{\nicefrac{1}{q}} 
		= 
		\biggl\| 
		\sum_{j=1}^n r_j a_j 
		\biggr\|_{L^q(\Omegat;\bbR)}  
		\leq 
		B_q 
		\Biggl( \sum_{j=1}^n a_j^2 \Biggr)^{\nicefrac{1}{2}}. 
	\] 
\end{definition} 

Since the norm on $L^q(\Omegat;\bbR)$ 
is increasing in $q$ and
\[
	\biggl\| \sum_{j=1}^n r_j a_j \biggr\|_{L^2(\Omegat;\bbR)}
	=
	\Biggl( \sum_{j=1}^n a_j^2 \Biggr)^{\nicefrac{1}{2}},
\]
it holds that $A_q=1$ for every $q\in [2,\infty)$ 
while $B_q=1$ for every $q\in [1,2]$. 
Sharp values of $A_q$ and $B_q$ 
in the remaining nontrivial cases were 
obtained in \cite{Haagerup1981}. 

\begin{definition}[Kahane--Khintchine constants]
\label{def:kahane-khintchine} 
	Let $p,q\in[1,\infty)$. 
	The  $(q,p)$ Kahane--Khintchine constant $K_{q,p}$ 
	is the smallest positive constant 
	such that 
	for any Rademacher family $(r_j)_{j\in\bbN}$ 
	on a complete probability space 
	$(\Omegat,\cAt,\bbPt)$, 
	for any real Banach space $(E,\norm{\,\cdot\,}{E})$,  
	for all $n\in\bbN$, 
	and every $x_1,\ldots,x_n\in E$, 
	\begin{equation*}
		\biggl\| 
		\sum_{j=1}^n r_j x_j
		\biggr\|_{L^q(\Omegat;E)} 
		\leq K_{q,p} \,
		\biggl\| 
		\sum_{j=1}^n r_j x_j
		\biggr\|_{L^p(\Omegat;E)}. 
	\end{equation*} 
\end{definition} 

\begin{remark} 
	For $q\leq p$, $K_{q,p}=1$ holds 
	due to H\"older's inequality. 
	For $q>p$, finiteness of the constants $K_{q,p}$ 
	was proved by Kahane \cite{kahane1985some}, 
	and it implies that all $L^q$-norms are equivalent 
	for Rademacher sums.
\end{remark}

\begin{definition}[Rademacher type]
\label{def:type-p-constant}
	A real Banach space $(E,\norm{\,\cdot\,}{E})$ 
	is said to be of \emph{(Rademacher) type} $p\in[1,2]$ 
	if there exists a constant $\tau\in(0,\infty)$ 
	such that 
	for any Rademacher family $(r_j)_{j\in\bbN}$ 
	on a complete probability space 
	$(\Omegat,\cAt,\bbPt)$ 
	(with expectation $\bbEt$), 
	for every $n\in\bbN$, 
	and all vectors $x_1,\ldots,x_n\in E$, 
	\begin{equation}\label{eq:type} 
		\biggl\| 
		\sum_{j=1}^n r_j x_j 
		\biggr\|_{L^p(\Omegat;E)}  
		=
		\Biggl( 
		\bbEt   
		\Biggl[ 
		\biggl\|
		\sum_{j=1}^n r_j x_j 
		\biggr\|_E^p 
		\Biggr] 
		\Biggr)^{\nicefrac{1}{p}} 
		\leq 
		\tau 
		\Biggl( 
		\sum_{j=1}^n \norm{x_j}{E}^p 
		\Biggr)^{\nicefrac{1}{p}} \!. 
	\end{equation} 
	In this case, the smallest constant $\tau\in(0,\infty)$ 
	in \eqref{eq:type} 
	is called the \emph{type $p$ constant of $E$} 
	and will be denoted by $\tau_p(E)$. 
\end{definition}

\begin{remark}\label{rm:type}
	Due to the triangle inequality, 
	every Banach space is at least of type~$1$. 
	Conversely, the type cannot be larger than $2$ 
	(due to finiteness of the Khintchine constant $A_q$ 
	for any $q\in [1,\infty)$) 
	and every Hilbert space is of Rademacher type~2, 
	see~\cite{Kwapien1972}.
	In addition, 
	since the Bochner norm $\|\cdot\|_{L^q(\Omegat;E)}$ 
	is increasing in~$q$ while the sequence norm 
	$\|\cdot\|_{\ell^q_n}$ is decreasing in~$q$, 
	if $(E,\|\cdot\|_E)$ has type $p\in [1,2]$ 
	with type constant $\tau_p(E)$, 
	then it is also of type $r\in [1,p]$, with $\tau_r(E)\leq \tau_p(E)$.
\end{remark}

\begin{example}\label{ex:Ls-type}
	Let $s\in [1,\infty)$, $k\in\bbN$, 
	$(S,\cS,\mu)$ be a $\sigma$-finite measure space 
	and $D\subset\bbR^d$ be a bounded Lipschitz domain.
	The spaces $\ell^s$, $L^s(S)$ and $W^{k,s}(D)$
	all have Rademacher type $p= \bar{s} := \min\left\{s,2\right\}$.
	For this $p$, 
	the type $p$ constants of $\ell^s$ and $L^s(S)$ satisfy 
	$\tau_p(\ell^s)=\tau_p(L^s(S))=\tau_{\bar{s}}(\ell^s)= 
	\tau_{\bar{s}}(L^s(S))=B_s$, 
	where $B_s\in(0,\infty)$ is 
	the Khintchine constant of Definition \ref{def:khintchine} 
	and we recall that $B_s = 1$ for $s\in[1,2]$.  
\end{example}

\begin{remark}\label{rm:type_finite-dim}
	Any real Banach space $(E,\|\cdot\|_E)$ 
	of finite dimension is of Rademacher type~$2$. 
	To see this, let $N$ be the dimension of $E$, 
	$\left\{\widetilde{x}_1,\dots,\widetilde{x}_N\right\}$ 
	a basis of $E$, 
	and consider the linear isomorphism 
	$T\from E\rightarrow \bbR^N$ 
	such that $T(\widetilde{x}_j)=\eb_j$, 
	where $\eb_j$ is 
	the $j$th canonical unit vector of $\bbR^N\!$, 
	for all $j\in\{1,\dots,N\}$. 
	The space $E$ is then isomorphic to~$\ell^2_N$, 
	that is, $\bbR^N$ equipped with the Euclidean norm, 
	but in general not isometrically. 
	However, as all norms are equivalent 
	in a finite-dimensional space, 
	there exist two constants 
	$\underline{C}_N$ and $\overline{C}_N$, 
	which may depend on the dimension $N$, 
	such that
	\[
		\underline{C}_N \|Tx\|_{\ell^2_N} 
		\leq \|x\|_E
		\leq \overline{C}_N \|Tx\|_{\ell^2_N} 
		\qquad 
		\forall x\in E. 
	\]
	Hence,
	for any Rademacher family $(r_j)_{j\in\bbN}$, 
	for every $n\in\bbN$, 
	and all $x_1,\ldots,x_n\in E$, 
	\begin{align*} 
		\biggl\| 
		\sum_{j=1}^n r_j x_j 
		\biggr\|_{L^2(\Omegat;E)} 
		&\leq 
		\overline{C}_N 
		\biggl\| 
		\sum_{j=1}^n r_j Tx_j 
		\biggr\|_{L^2(\Omegat;\ell^2_N)} 
		\\
		&\leq \overline{C}_N
		\Biggl( 
		\sum_{j=1}^n \|Tx_j\|_{\ell^2_N}^2 
		\Biggr)^{\nicefrac{1}{2}} 
		\leq \frac{\overline{C}_N}{\underline{C}_N} 
		\Biggl( \sum_{j=1}^n \|x_j\|_E^2 \Biggr)^{\nicefrac{1}{2}}\!,
	\end{align*} 
	where we used that $\ell^2_N$ is of 
	Rademacher type~$2$ 
	and has type~$2$ constant equal to~$1$. 
	We conclude that $E$ is of type $2$ 
	with $\tau_2(E)\leq \frac{\overline{C}_N}{\underline{C}_N}$.
\end{remark}

\subsection{\texorpdfstring{$k$-fold}{k-fold} 
	tensor product spaces and 
	\texorpdfstring{$k$th}{kth} moments}
\label{sec:prel_kmoments}
In this subsection, 
we introduce $k$-fold tensor products of a 
real Banach space $(E, \norm{\,\cdot\,}{E})$ 
and interpret the $k$th moment
of a random variable $X\from\Omega\rightarrow E$ 
as an element of a specific 
complete normed tensor product space.

For an integer $k\in\bbN$, 
the \emph{algebraic tensor product space} $\otimes^k E$ 
is defined as the vector space consisting of all finite sums 
of the form
\[
	\sum_{j=1}^M x_{j,1}\otimes\cdots\otimes x_{j,k},
	\qquad 
	x_{j,i}\in E,\;1\leq j\leq M,\;1\leq i\leq k,\; M\in\mathbb{N},
\] 
equipped with the algebraic operations rendering 
the tensor product linear in each of its $k$ components, 
see e.g.~\cite[Section 1.1]{floret1997natural} 
and \cite[Chapter~1]{ryan2002introduction}.
The space $\otimes^k E$ can be equipped 
with different norms. 
Common choices are the 
\emph{injective} and \emph{projective} 
tensor norms, see 
\cite[Chapters~2 and~3]{ryan2002introduction}, 
which are respectively the weakest 
and strongest crossnorm, 
see \cite[Proposition~6.1]{ryan2002introduction}.
As shown in \cite[Example 3.21]{KKChS2024}  
Monte Carlo methods will, in general, 
not convergence in the projective tensor norm, 
and we therefore 
introduce only the former. 
For $U\in \otimes^k E$ with representation 
$U=\sum_{j=1}^M x_{j,1}\otimes\cdots\otimes x_{j,k}$, 
the \emph{injective tensor norm} is defined by
\[ 
	\|U\|_{\varepsilon} := 
	\sup\left\{ 
	\biggl|\sum_{j=1}^M \prod_{i=1}^k f_i(x_{j,i})\biggl|
	\;\Bigg|\; f_1,\dots,f_k\in B_{E'} 
	\right\},
\] 
and it can be shown that the value of this supremum 
is independent of the choice for the representation of 
$U\in \otimes^k E$. 
The crossnorm property implies that, 
for every elementary tensor 
$x_1\otimes \cdots\otimes x_k$,  
\begin{equation}\label{eq:crossnorm-property_injective_norm}
	\|x_1\otimes \cdots\otimes x_k\|_{\varepsilon} 
	=
	\sup\left\{ \biggl| \prod_{i=1}^k f_i(x_i)\biggl|
	\;\Bigg|\; 
	f_1,\dots,f_k\in B_{E'}\right\}
	=
	\prod_{i=1}^k \|x_i\|_E. 
\end{equation}
The \emph{$k$-fold injective tensor product space}, 
denoted by $\otimes_{\varepsilon}^k E$, 
is then defined as the closure of $\otimes^k E$ 
with respect to the injective norm, that is, 
$\otimes_{\varepsilon}^k E
:=\overline{\otimes^k E}^{\|\,\cdot\, \|_{\varepsilon}}\!$.

The analysis of Subsection~\ref{sec:SL_MLMC_kmoments} 
builds on the convergence results for 
multilevel Monte Carlo methods presented in \cite{KKChS2024}, 
which are formulated in a \emph{symmetric} injective tensor norm. 
We therefore introduce 
subspaces of $\otimes^k E$ and of $\otimes^k_\varepsilon E$, 
containing only symmetric elements. 
To this end, for any $x_1\otimes\cdots\otimes x_k \in \otimes^k E$, 
we introduce its symmetrization,
\[
	s(x_1\otimes\cdots\otimes x_k)
	:=
	\frac{1}{k!}\sum_{\sigma\in S_k} x_{\sigma(1)}\otimes\cdots\otimes x_{\sigma(k)},
\]
where $S_k$ is the group of permutations of 
the set $\left\{1,\dots,k\right\}$. 
The $k$-fold symmetric algebraic tensor product of $E$, 
denoted by $\otimes^{k,s} E$, 
is then defined by 
\begin{align*} 
	\otimes^{k,s} E
	&:=
	\left\{
	\sum_{j=1}^M s(x_{j,1}\otimes\cdots \otimes x_{j,k}) 
	\;\Bigg| \; M\in \bbN,\; x_{j,i}\in E,\;1\leq j\leq M,\; 1\leq i\leq k
	\right\}
	\\
	&\textcolor{white}{:}=
	\left\{
	\sum_{j=1}^M \lambda_j \, {\otimes^k} x_{j} 
	\;\Bigg| \; 
	M\in \bbN,\; \lambda_j\in\bbR,\; 
	x_{j}\in E,\;1\leq j\leq M 
	\right\}, 
\end{align*} 
see \cite[Section~1.5]{floret1997natural},
where we used the notation
\[
	{\otimes^k} x 
	:= 
	\underbrace{x\otimes \cdots \otimes x}_{\normalfont 
	\text{$k$ times}} 
	\qquad \forall x \in E.
\]
For   
$U=\sum_{j=1}^M \lambda_j \, {\otimes^k} x_j\in \otimes^{k,s} E$, 
the \emph{symmetric injective tensor norm} is given by
\[ 
	\|U\|_{\varepsilon_s} 
	:= 
	\sup\left\{ 
	\biggl|\sum_{j=1}^M \lambda_j f(x_j)^k \biggr| 
	\;\Bigg|\; f\in B_{E'}
	\right\},
\] 
which does not depend on the choice 
of the representation of~$U$ either.
The \emph{$k$-fold symmetric injective tensor product of $E$} 
is defined at the closure of $\otimes^{k,s} E$ 
with respect to the norm $\|\cdot\|_{\varepsilon_s}$. 
We finally recall that the symmetrization~$s$ 
can be extended to a linear and continuous projection 
from $\otimes^k_{\varepsilon} E$ 
to $\otimes^{k,s}_{\varepsilon_s} E$, 
and further that~$\|\cdot\|_{\varepsilon_s}$ 
and~$\|\cdot\|_{\varepsilon}$ 
are equivalent as norms on $\otimes^{k,s}_{\varepsilon_s} E$, 
with equivalence constants depending on $k$, 
see \cite[Section~2.2]{KKChS2024}.

For a strongly measurable $E$-valued random variable $X$, 
its \emph{injective $k$th moment $\bbM^k_{\varepsilon}[X]$} 
is defined as 
the expectation 
(see, e.g., \cite[Section~3.1]{janson2015higher}) 
\[
	\bbM^k_{\varepsilon}[X]
	:= 
	\bbE\bigl[ {\otimes^k} X \bigr] 
	= 
	\int_\Omega  {\otimes^k} X(\omega) \, \rd \bbP(\omega) 
	= 
	\int_\Omega 
	\underbrace{X(\omega) \otimes \cdots \otimes X(\omega)}_{ 
		\text{\normalfont $k$ times}}
	\, \rd\bbP(\omega), 
\] 
whenever this integral exists (in Bochner sense)
in the $k$-fold injective 
tensor product space $\otimes^k_{\varepsilon} E$. 
A sufficient condition for the existence 
of the injective $k$th moment 
$\bbM^k_{\varepsilon}[X]$ is that 
$X\in L^k(\Omega;E)$. 
Indeed, the strong measurability of~$X$ 
implies that 
${\otimes^k} X \from \Omega\to \otimes^k_{\varepsilon} E$ 
is strongly measurable since the nonlinear mapping 
$E\ni x \mapsto {\otimes^k} x \in \otimes^k_{\varepsilon} E$ 
is continuous and, secondly, due to 
\eqref{eq:crossnorm-property_injective_norm},
\[
	\bbE\bigl[ 
	\| {\otimes^k} X\|_{\varepsilon} 
	\bigr] 
	= 
	\bbE\bigl[ \| X\|^k_E \bigr]<\infty.
\]
Thus, ${\otimes^k} X\in L^1(\Omega;\otimes^k_{\varepsilon} E)$ 
and the existence of $\bbM^k_{\varepsilon}[X]$ 
follows from the definition of the Bochner integral  
\cite[Proposition~1.2.2]{AnalysisInBanachSpacesI2016}. 
Finally, notice that the injective $k$th moment is 
an element of the $k$-fold \emph{symmetric} 
injective tensor product space since, 
due to the continuity and linearity of the symmetrization,
\[
	s\bigl(\bbM^k_{\varepsilon} [X ]\bigr) 
	= 
	s\bigl(\bbE\bigl[ {\otimes^k X}\bigr]\bigr)
	=
	\bbE\bigl[ s( {\otimes^k} X) \bigr] 
	= 
	\bbE\bigl[ {\otimes^k}X\bigr] 
	= 
	\bbM^k_{\varepsilon}[X].
\]

We have so far introduced the $k$-fold 
(symmetric) injective tensor product of 
a generic Banach space $E$. 
In Section~\ref{sec:Minkowski}, 
we will perform an analysis 
tailored for $L^p(S)$-valued 
random variables, where $p\in[1,\infty)$. 
The results presented there 
for first moments readily extend to 
higher-order moments 
when considered as elements 
of a tensor product space equipped 
with a crossnorm which is stronger 
than the injective tensor norm and 
which arises naturally 
on the algebraic tensor product space ${\otimes^k} L^p(S)$, 
see e.g.~\cite[Chapter I, Section 7]{defant1993tensor}.
To introduce this tensor norm, 
let $(S^k\!,\cS^k\!,\mu^k)$ 
be the product measure space 
of $k$ copies of $(S,\cS,\mu)$, $p\in [1,\infty)$,
and consider the injective linear map 
$\phi\from {\otimes^k} L^p(S)
\rightarrow  L^p(S^k)$ such that
\[ 
	\phi:\; 
	\sum_{j=1}^M\bigotimes_{i=1}^k f_{j,i}
	\mapsto 
	 \sum_{j=1}^M \prod_{i=1}^k f_{j,i}.
\] 
Here, $\prod_{i=1}^k f_{j,i}$ is interpreted as 
(the equivalence class of)
the product function 
\[ 
	\prod_{i=1}^k f_{j,i}:\;
	(s_1,\dots,s_k)
	\mapsto 
	f_{j,1}(s_1)\cdots f_{j,k}(s_k),
\]
and the injectivity of $\phi$ follows from 
\cite[Proposition~1.2]{ryan2002introduction},
the identification of $(L^p(S))'$ with $L^{p'\!}(S)$, 
and the Fubini theorem.
Using the mapping $\phi$, any element of ${\otimes^k} L^p(S)$
can be identified with a function in $L^p(S^k)$. 
We thus can define a crossnorm 
$\triplenorm{\,\cdot\,}$ on ${\otimes^k} L^p(S)$ as
\[
	\Biggtriplenorm{\sum_{j=1}^M \bigotimes_{i=1}^k f_{j,i}}
	:=
	\Biggl\|
	\phi\Biggl(\sum_{j=1}^M \bigotimes_{i=1}^k f_{j,i}\Biggr)
	\Biggr\|_{L^p(S^k)}
	=
	\Biggl\|\sum_{j=1}^M \prod_{i=1}^k f_{j,i}\Biggr\|_{L^p(S^k)}\!,
\]
and consider the tensor product space 
obtained by taking the closure of ${\otimes^k} L^p(S)$ 
with respect to  this norm, 
$\otimes^k_p L^p(S)
:=\overline{\otimes^k L^p(S)}^{\triplenorm{\cdot}}$.

Assume now that 
$X\from\Omega\rightarrow L^p(S)$ 
is a strongly measurable random variable . 
Its \emph{$L^p$ $k$th moment} is defined as the expectation 
\begin{equation}\label{eq:Lpkmoment}
	\bbM^k_p [ X ]
	:=
	\bbE\bigl[ {\otimes^k} X \bigr] 
	= 
	\int_\Omega {\otimes^k} X(\w) \, \rd\bbP(\w),
\end{equation}
whenever this integral exists (in Bochner sense) 
as an element of
$\otimes^k_p L^p(S) \cong L^p(S^k)$.
A sufficient condition for the $k$th moment to exist is 
$X\in L^k(\Omega;L^p(S))$, since then
\[
	\bbE\bigl[ \triplenorm{ {\otimes^k} X}
	\bigr] 
	= 
	\bbE\bigl[ \| X \|_{L^p(S)}^k \bigr]<\infty,
\]
and the existence of $\bbM^k_p[ X ]$ follows again 
from the theory of the Bochner integral.

\section{Optimized Monte Carlo error analysis 
	based on dimension-dependent type constants}
\label{sec:dimension_dependent_constants}
In this section, we present a Monte Carlo error analysis 
that accounts for the dependence of the Rademacher 
type constants on the dimension of the approximation space. 
Subsections~\ref{sec:SLMC} and~\ref{sec:MLMC} study, 
respectively, single-level and multilevel Monte Carlo methods 
to approximate the first moment, 
that is the mean, of a random variable with values 
in a generic real Banach space $(E,\|\cdot\|_E)$. 
Subsection~\ref{sec:SL_MLMC_kmoments} 
extends this analysis to the approximation of arbitrary $k$th moments.

\subsection{Single-level Monte Carlo methods}\label{sec:SLMC}

Suppose that $(E,\norm{\,\cdot\,}{E})$ is a real Banach space 
of Rademacher type $p\in[1,2]$, 
with type $p$ constant $\tau_p(E)\in(0,\infty)$. 
From Definition \ref{def:type-p-constant}, 
we thus have for any Rademacher family $(r_j)_{j\in\bbN}$ 
on a complete probability space 
$(\widetilde{\Omega},\widetilde{\cA},\widetilde{\bbP})$ 
(with expectation $\widetilde{\bbE}$), 
for every $n\in\bbN$, 
and all vectors $x_1,\ldots,x_n\in E$, 
\[
	\biggl\| 
	\sum_{j=1}^n r_j x_j 
	\biggr\|_{L^p(\widetilde{\Omega};E)}  
	=
	\Biggl( 
	\widetilde{\bbE}
	\Biggl[ 
	\biggl\|
	\sum_{j=1}^n r_j x_j 
	\biggr\|_E^p 
	\Biggr] 
	\Biggr)^{\nicefrac{1}{p}} 
	\leq 
	\tau_p(E) 
	\Biggl( 
	\sum_{j=1}^n \norm{x_j}{E}^p 
	\Biggr)^{\nicefrac{1}{p}} \!. 
\]
Assume that $(E_\ell)_{\ell\in\bbN}$ is a family of 
subspaces of~$E$,  
which have finite dimension 
$N_\ell := \dim(E_\ell) < \infty$. 
Then, for every $\ell\in\bbN$, 
$E_\ell$ has any Rademacher type $r\in[1,2]$ 
(due to $\dim(E_\ell)<\infty$, 
see Remarks~\ref{rm:type} and~\ref{rm:type_finite-dim}) 
and we denote the corresponding type constants 
by $\tau_r(E_\ell)$, $r\in[1,2]$, $\ell\in\bbN$. 
Consequently, for every $\ell\in\bbN$, for all $r\in[1,2]$, 
for any Rademacher family $(r_j)_{j\in\bbN}$ 
on a complete probability space 
$(\widetilde{\Omega},\widetilde{\cA},\widetilde{\bbP})$ 
(with expectation $\widetilde{\bbE}$), 
for every $n\in\bbN$, 
and all vectors $x_1,\ldots,x_n\in E_\ell$, 
\[
	\biggl\| 
	\sum_{j=1}^n r_j x_j 
	\biggr\|_{L^r(\widetilde{\Omega};E)}  
	=
	\Biggl( 
	\widetilde{\bbE}
	\Biggl[ 
	\biggl\|
	\sum_{j=1}^n r_j x_j 
	\biggr\|_E^r 
	\Biggr] 
	\Biggr)^{\nicefrac{1}{r}} 
	\leq 
	\tau_r(E_\ell) 
	\Biggl( 
	\sum_{j=1}^n \norm{x_j}{E}^r 
	\Biggr)^{\nicefrac{1}{r}} \!. 
\]
Note that the type constants satisfy the following: 
\begin{equation}\label{eq:tau-uni-bdd} 
	\forall r \in [1,p]: 
	\quad 
	\sup_{\ell\in\bbN} \tau_r(E_\ell) 
	\leq 
	\tau_p(E) < \infty, 
\end{equation}
and that, for $r>p$, the type $r$ constant 
$\tau_r(E_\ell) $ will, in general, 
not be bounded uniformly in~$\ell$. 

\begin{example} 
	Let $s\in[1,\infty)$ and consider the Banach space $\ell^s$
	which is of type $p=\bar{s}:=\min\{s,2\}$. 
	Furthermore, for every $N\in\bbN$, 
	let $E_N := \ell^s_N \subset \ell^s$. 
	Then,
	\[
		\tau_r(E_N) 
		= 
		\tau_r(\ell^s_N) 
		= 
		\begin{cases} 
			B_s & \text{ if } r\in[1,p] , 
			\\
			N^{\frac{1}{p} - \frac{1}{r}} 
			& \text{ if } r\in(p,2], 
		\end{cases} 
	\]
where $B_s\in(0,\infty)$ is the 
	Khintchine constant from Definition~\ref{def:khintchine}. 
For $r\in [1,p]$, the claim follows from 
\eqref{eq:tau-uni-bdd} and Remark~\ref{rm:type}, 
see also Example~\ref{ex:Ls-type}, 
while for $p=s<2$ and 
$r\in (p,2]$ we may use 
the equivalence relations 
\[ 
	\|u\|_{\ell_N^r}
	\leq 
	\|u\|_{\ell_N^p}
	\leq N^{\frac{1}{p}-\frac{1}{r}}\|u\|_{\ell^r_N}, 
	\quad \forall u\in \ell^p_N=\ell^r_N,
	\quad  1\leq p\leq r<\infty.
\]
\end{example} 

Since the finite-dimensional subspaces $(E_\ell)_{\ell\in \bbN}$ 
of~$E$ have any Rademacher type ${r\in [1,2]}$,  
we readily obtain from \cite[Corollary~3.15]{KKChS2024} 
the following single-level Monte Carlo 
error estimate for the first moment. 

\begin{proposition}\label{prop:dim-dep-MC-1} 	
	Let $\eta\in L^1(\Omega;E)$, 
	$q\in[1,\infty)$, 
	and set $\bar{q}:=\min\{q,2\}$. 
	For every $\ell\in\bbN$, 
	let $M_\ell\in\bbN$ and suppose that 
	$\xi_{\ell,1},\ldots,\xi_{\ell,M_\ell}\in L^q(\Omega;E_\ell)$ are  
	independent and identically distributed 
	random variables taking values in a finite-dimensional 
	subspace $E_\ell\subseteq E$. 
	Then, we have, for 
	every $\ell\in\bbN$ and 
	all $r\in[1,\bar{q}]$, 
	\begin{align*} 
		\biggl\| 
		\bbE[\eta] 
		- \frac{1}{M_\ell}
		\sum_{j=1}^{M_\ell} 
		\xi_{\ell,j}  
		\biggr\|_{L^q(\Omega;E)} 
		&\leq 
		\bigl\| \bbE[ \eta - \xi_{\ell,1}] \bigr\|_{E}  
		\\
		&\quad 
		+ 
		2 K_{q,r} \tau_r(E_\ell)  
		M_\ell^{-\left( 1-\frac{1}{r} \right)} 
		\bigl\| \xi_{\ell,1} - \bbE[\xi_{\ell,1}] \bigr\|_{L^q(\Omega; E)}. 
	\end{align*}
\end{proposition}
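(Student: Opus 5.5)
The plan is to split the error into a deterministic bias and a centered sampling error, and then to bound the sampling error by symmetrization, the Kahane--Khintchine inequality and the type-$r$ property of $E_\ell$. By the triangle inequality in $L^q(\Omega;E)$, and since $\bbE[\eta]-\bbE[\xi_{\ell,1}]$ is deterministic,
\[
\biggl\|\bbE[\eta]-\frac{1}{M_\ell}\sum_{j=1}^{M_\ell}\xi_{\ell,j}\biggr\|_{L^q(\Omega;E)}
\leq \bigl\|\bbE[\eta-\xi_{\ell,1}]\bigr\|_E
+\frac{1}{M_\ell}\biggl\|\sum_{j=1}^{M_\ell}\eta_j\biggr\|_{L^q(\Omega;E)},
\]
where $\eta_j:=\xi_{\ell,j}-\bbE[\xi_{\ell,1}]$. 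Here we used that the $\xi_{\ell,j}$ are identically distributed, so $\bbE[\xi_{\ell,j}]=\bbE[\xi_{\ell,1}]$ for all $j$. The $\eta_j$ are i.i.d., centered, belong to $L^q(\Omega;E_\ell)$, and take values in $E_\ell$ because $E_\ell$ is a finite-dimensional, hence closed, subspace and the expectation of an $E_\ell$-valued random variable lies in $E_\ell$.

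For the sampling term, enlarge the probability space to a product $\Omega\times\Omegat$ carrying an independent Rademacher family $(r_j)$. Lemma~\ref{lem:symmetrization} then gives
\[
\biggl\|\sum_j\eta_j\biggr\|_{L^q}\leq 2\,\biggl\|\sum_j r_j\eta_j\biggr\|_{L^q(\Omega\times\Omegat;E)}.
\]
Next, apply Fubini and, for each fixed $\omega$, the Kahane--Khintchine inequality of Definition~\ref{def:kahane-khintchine} with the deterministic vectors $x_j=\eta_j(\omega)\in E_\ell$. This yields
\[
\bigl(\bbEt\|\textstyle\sum_j r_j\eta_j(\omega)\|_E^q\bigr)^{1/q}\leq K_{q,r}\bigl(\bbEt\|\textstyle\sum_j r_j\eta_j(\omega)\|_E^r\bigr)^{1/r}.
\]
Then use the type-$r$ property of $E_\ell$ pointwise in $\omega$, which bounds the right-hand side by $K_{q,r}\tau_r(E_\ell)\bigl(\sum_j\|\eta_j(\omega)\|_E^r\bigr)^{1/r}$.

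It remains to take the $L^q(\Omega)$-norm of $\bigl(\sum_j\|\eta_j\|_E^r\bigr)^{1/r}$. This equals $\bigl\|\sum_j\|\eta_j\|_E^r\bigr\|_{L^{q/r}(\Omega)}^{1/r}$. Since $r\leq \bar q\leq q$, we have $q/r\geq1$, so the triangle inequality in $L^{q/r}(\Omega)$ applies, and the identical distribution gives the bound
\[
\Bigl(\sum_j\bigl\|\|\eta_j\|_E^r\bigr\|_{L^{q/r}}\Bigr)^{1/r}=M_\ell^{1/r}\|\eta_1\|_{L^q(\Omega;E)}.
\]
Dividing by $M_\ell$ produces the factor $M_\ell^{-(1-1/r)}$ and the constant $2K_{q,r}\tau_r(E_\ell)$, which is the claim. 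The paper notes this is exactly the argument of \cite[Corollary~3.15]{KKChS2024} applied with $E_\ell$ in place of $E$ and type $r$ in place of $p$; one could alternatively just invoke that corollary for the $E_\ell$-valued variables $\xi_{\ell,j}$.

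The only delicate point is the restriction $r\leq\bar q$. The condition $r\leq q$ is what the $L^{q/r}$ triangle inequality needs, and $r\leq2$ is needed because no type above $2$ exists. Measurability of the product-space construction is routine, since everything lives in the separable finite-dimensional space $E_\ell$.
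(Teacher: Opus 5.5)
Your proof is correct and follows the paper's route. The paper simply invokes \cite[Corollary~3.15]{KKChS2024} for the $E_\ell$-valued samples, using that $E_\ell$ has type $r$ for every $r\in[1,2]$; your bias/sampling split followed by symmetrization, Kahane--Khintchine, the type-$r$ inequality in $E_\ell$, and the triangle inequality in $L^{q/r}(\Omega)$ is that same argument written out explicitly.
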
 
 
In the case that, in addition to the assumptions made in 
Proposition~\ref{prop:dim-dep-MC-1}, 
there exist constants $\alpha,C_\alpha, C_{\sf stab}\in(0,\infty)$ 
such that 
\begin{equation}\label{eq:alpha-stab} 
	\sup_{\ell\in\bbN} 
	N_\ell^\alpha 
	\bigl\| \bbE[ \eta - \xi_{\ell,1}] \bigr\|_{E}  
	\leq 
	C_\alpha , 
	\qquad 
	\sup_{\ell\in\bbN} \, 
	\bigl\| \xi_{\ell,1} - \bbE[\xi_{\ell,1}] \bigr\|_{L^q(\Omega; E)} 
	\leq 
	C_{\sf stab}, 
\end{equation}
then 
we obtain, for every $\ell\in\bbN$ and all $r\in[1,\bar{q}]$, 
the estimate  
\begin{equation}\label{eq:dim-dep-estimate_SLMC}
	\biggl\| 
	\bbE[\eta] 
	- \frac{1}{M_\ell}
	\sum_{j=1}^{M_\ell} 
	\xi_{\ell,j}  
	\biggr\|_{L^q(\Omega;E)} 
	\leq 
	C_\alpha N_\ell^{-\alpha}
	+ 
	2 K_{q,1} C_{\sf stab} 
	\tau_r(E_\ell) 
	M_\ell^{- \frac{1}{r'} \!}, 
\end{equation} 
where we interpret $\tfrac{1}{r'} = 0$ for $r=1$.
Here, we also bounded the Kahane--Khintchine 
constant $K_{q,r}$ from above by~$K_{q,1}$, 
since it does not depend on $\ell$. 
Note that by \cite[Theorem~6.2.4 and p.~49]{AnalysisInBanachSpacesII2017} 
we have
\begin{equation}\label{eq:bound_Kahane}
	\sup_{r\in[1,2]}K_{q,r} 
	\leq 
	K_{q,1} 
	\leq 
	\begin{cases} 
	K_{2,1} \quad\;\;\; = \sqrt{2}
	&\text{ if } q \in [1,2], 
	\\
	K_{q,2}K_{2,1} 
	\leq \sqrt{2(q-1)}  
	&\text{ if } 
	q\in(2,\infty). 
	\end{cases} 
\end{equation}
Provided that $q>1$ and $r\in(1,\bar{q}]$, 
we thus choose the number of Monte Carlo samples as 
\[ 
	M_\ell 
	= 
	\Bigl\lceil [ \tau_r(E_\ell) N_\ell^\alpha]^{r'} \Bigr\rceil
	= 
	\Bigl\lceil 
	[\tau_r(E_\ell) ]^{r'\!} N_\ell^{\alpha r'} 
	\Bigr\rceil ,  
	\qquad 
	\ell\in\bbN, 
\] 
which balances the two contributions 
in \eqref{eq:dim-dep-estimate_SLMC} 
and yields the estimate 
\begin{equation}\label{eq:dim-dep_SLMC-conv} 
	\forall \ell\in\bbN: 
	\quad 
	\biggl\| 
	\bbE[\eta] 
	- \frac{1}{M_\ell}
	\sum_{j=1}^{M_\ell} 
	\xi_{\ell,j}  
	\biggr\|_{L^q(\Omega;E)} 
	\leq 
	\bigl( C_\alpha 
	+ 
	2 K_{q,1} C_{\sf stab} \bigr) 
	N_\ell^{-\alpha}\!. 
\end{equation}
Since this estimate holds for all $r\in(1,\bar{q}]$, 
we can optimize the number of Monte Carlo samples,  
for all $\ell\in\bbN$, by setting
\[ 
	M_\ell^{\rm opt} 
	:= 
	\begin{cases} 
		\bigl\lceil 
		\inf_{r\in(1,\bar{q}]} 
		[ \tau_r(E_\ell) N_\ell^\alpha]^{r'} 
		\bigr\rceil 
		&\quad\text{ if } p = 1, \quad\;\;\; q\in(1,\infty) , 
		\\
		\bigl\lceil 
		\inf_{r\in [p,\bar{q}]} 
		[ \tau_r(E_\ell) N_\ell^\alpha]^{r'}
		\bigr\rceil 
		&\quad\text{ if } p \in (1,2], \ q\in[p,\infty), 
	\end{cases} 
\] 
whenever these infima exist. 
To restrict the interval of $r$ 
from $(1,\bar{q}]$ to $[p,\bar{q}]$ 
in the second case, 
we used \eqref{eq:tau-uni-bdd}  
as well as the fact that the map 
$r\mapsto N_\ell^{\alpha r'\!}$ 
is monotonically decreasing in $r$. 
To summarize the previous observations in a corollary we define, 
for every $\ell\in\bbN$, the function 
\begin{equation}\label{eq:def:fl}
	f_{\alpha;\ell} \from (1,2]  
	\to [0,\infty), 
	\qquad 
	f_{\alpha;\ell}(r) 
	:= 
	[ \tau_r(E_\ell) N_\ell^\alpha]^{r' \!} . 
\end{equation}

\begin{corollary}\label{cor:dim-dep-MC-1} 
	Assume that  
	$(E,\norm{\,\cdot\,}{E})$ 
	is of Rademacher type $p\in[1,2]$, 
	let ${\eta\in L^1(\Omega;E)}$, 
	$q\in(1,\infty)\cap[p,\infty)$, 
	and set  
	$\bar{q}:=\min\{q,2\}$,  
	$R(p,q)  := [p,q]\cap(1,2]$. 
	In addition, for every $\ell\in\bbN$, 
	let  
	${(\xi_{\ell,j})_{j\in\bbN}\subset L^q(\Omega;E_\ell)}$ be 
	independent and identically distributed 
	random variables taking values in a subspace 
	$E_\ell\subseteq E$ of dimension 
	$N_\ell := \dim(E_\ell) < \infty$ 
	such that~\eqref{eq:alpha-stab} holds 
	for some constants~$\alpha, C_\alpha, C_{\sf stab}\in(0,\infty)$. 
	\begin{enumerate}[label={\normalfont(\roman*)},leftmargin=8mm, topsep=3pt]
		\item 
			In the case that 
			the function 
			$r\mapsto f_{\alpha;\ell}(r)$ defined 
			in \eqref{eq:def:fl} 
			is 
			monotonically  
			decreasing on $R(p,q)\subseteq(1,2]$  
			for every $\ell\in\bbN$, 
			then for the choice 
			\[
				M_\ell 
				:= 
				M_\ell^{\rm opt} 
				= 
				\Bigl\lceil [\tau_{\bar{q}}(E_\ell) N_\ell^\alpha]^{\bar{q}'} \Bigr\rceil, 
				\qquad 
				\ell\in\bbN, 
			\] 
			the (single-level) 
			Monte Carlo error estimate \eqref{eq:dim-dep_SLMC-conv} holds. 
		\item 
			Whenever $p \in(1,2]$
			and the function 
			$r\mapsto f_{\alpha;\ell}(r)$ 
			defined in \eqref{eq:def:fl} is 
			monotonically increasing on $R(p,q)$ 
			for every $\ell\in\bbN$, 
			then for the choice 
			\[
				M_\ell 
				:=
				M_\ell^{\rm opt} 
				= 
				\Bigl\lceil [\tau_p(E_\ell) N_\ell^\alpha]^{p'} \Bigr\rceil, 
				\qquad 
				\ell\in\bbN, 
			\]
			the (single-level) 
			Monte Carlo error estimate \eqref{eq:dim-dep_SLMC-conv} holds. 
	\end{enumerate} 
\end{corollary}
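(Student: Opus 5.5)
The argument runs through estimate \eqref{eq:dim-dep-estimate_SLMC}, which follows from Proposition~\ref{prop:dim-dep-MC-1} together with \eqref{eq:alpha-stab} and the $\ell$-independent bound $K_{q,r}\le K_{q,1}$ from \eqref{eq:bound_Kahane}. This estimate holds for every $r\in[1,\bar q]$, with $\tau_r(E_\ell)<\infty$ by Remarks~\ref{rm:type} and~\ref{rm:type_finite-dim}. The observation to exploit is that, for any fixed $r\in(1,\bar q]$, a sample size $M_\ell\ge f_{\alpha;\ell}(r)=[\tau_r(E_\ell)N_\ell^\alpha]^{r'}$ gives $\tau_r(E_\ell)M_\ell^{-1/r'}\le N_\ell^{-\alpha}$. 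Inserting this into \eqref{eq:dim-dep-estimate_SLMC} yields \eqref{eq:dim-dep_SLMC-conv}. It therefore suffices, in each case, to identify one admissible $r\in(1,\bar q]$ such that the prescribed $M_\ell$ is at least $f_{\alpha;\ell}(r)$. Since $M_\ell$ is a ceiling of such a value, this holds with equality before rounding.

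\emph{Case (i).} Take $r=\bar q$. This is admissible because $q>1$ and $q\ge p$ give $\bar q\in(1,2]$, and $\bar q\in R(p,q)$. With this choice $M_\ell=\lceil f_{\alpha;\ell}(\bar q)\rceil$, and the bound follows from the observation above. Monotone decrease of $f_{\alpha;\ell}$ on $R(p,q)$ shows that $\bar q$ attains $\inf_{r\in R(p,q)} f_{\alpha;\ell}(r)$. To identify $M_\ell$ with $M_\ell^{\rm opt}$ I also need the infimum over the full ranges used in its definition. For $p=1$ that range is $(1,\bar q]=R(1,q)$, so there is nothing more to show. For $p>1$ the range is $[p,\bar q]=R(p,q)$, which again matches.

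\emph{Case (ii).} Here $p\in(1,2]$, so $r=p$ is admissible: $p\le\bar q$ because $q\ge p$ and $p\le 2$. Then $M_\ell=\lceil f_{\alpha;\ell}(p)\rceil$ gives the estimate directly. Monotone increase on $R(p,q)=[p,\bar q]$ means the infimum in $M_\ell^{\rm opt}$ is attained at $r=p$.

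One could worry about values $r\in(1,p)$ that are excluded from the optimization when $p>1$. These are harmless: by \eqref{eq:tau-uni-bdd} we have $\tau_r(E_\ell)\le\tau_p(E)$ for $r\le p$, and the factor $N_\ell^{\alpha r'}$ decreases in $r$. This restriction is only needed to justify the formula for $M_\ell^{\rm opt}$, and the paper already makes the remark before the corollary. The error bound itself relies only on the single admissible $r$ chosen in each case.

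The only real point of care is the bookkeeping of which $r$ is admissible, namely $r\in(1,\bar q]$ together with $q\ge p$. There is no analytic obstacle; everything reduces to \eqref{eq:dim-dep-estimate_SLMC}.
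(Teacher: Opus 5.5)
Your proposal is correct and takes the same route as the paper, which obtains the corollary from the discussion just before it: the estimate \eqref{eq:dim-dep-estimate_SLMC} holds for every $r\in(1,\bar q]$, and the choice $M_\ell=\lceil f_{\alpha;\ell}(r)\rceil$ balances its two terms to yield \eqref{eq:dim-dep_SLMC-conv}, while the monotonicity hypotheses only place the infimum defining $M_\ell^{\rm opt}$ at the endpoint $\bar q$ or $p$ of $R(p,q)$. Your observation that $R(p,q)$ equals $(1,\bar q]$ for $p=1$ and $[p,\bar q]$ for $p>1$ is exactly what is needed to match the two cases in the definition of $M_\ell^{\rm opt}$.
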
 

In order to formulate a complexity result 
for the single-level Monte Carlo method,	
we make the following assumption about the 
growth of the type constants with respect 
to the dimension of the subspaces $(E_\ell)_{\ell\in \bbN}$.

\begin{assumption}[Growth of type constants]
\label{ass:constants_subspaces} 
	Let $(E,\norm{\,\cdot\,}{E})$ 
	be of Rademacher type $p\in[1,2]$,  
	$(E_\ell)_{\ell\in\bbN}$ be a family of subspaces 
	of $(E,\norm{\,\cdot\,}{E})$ with dimensions 
	$N_\ell := \dim(E_\ell)<\infty$, $\ell\in\bbN$. 
    There exist constants $a_0, a_1 \in[0,\infty)$ 
    and $C_\tau \in[1,\infty)$ such that 
	\begin{equation}\label{eq:tau-poly}
				\sup_{\ell\in\bbN} 
				\sup_{r\in[p,2]\cap(1,2]}  
				\Bigl( N_\ell^{-(a_0 - a_1 r')}
				[\tau_r(E_\ell)]^{r'\!} \Bigr) 
				\leq C_\tau. 
	\end{equation}
\end{assumption}

Under Assumption \ref{ass:constants_subspaces}, 
for every $\ell\in\bbN$,  
the function $f_{\alpha;\ell}\from(1,2]\to[0,\infty)$ 
in \eqref{eq:def:fl}  
admits the upper bound 
\[
	\forall r\in[p,2]\cap(1,2]: 
	\quad 
	f_{\alpha;\ell}(r) 
	\leq 
	C_\tau 
	N_\ell^{a_0 + (\alpha - a_1) r' \!}, 
\]
which is monotonically decreasing in $r$ 
if $\alpha\geq a_1$ and 
strictly monotonically increasing
in $r$ if $\alpha < a_1$. 
This motivates to choose the number 
of Monte Carlo samples for a 
single-level Monte Carlo method 
in the setting of Corollary~\ref{cor:dim-dep-MC-1}
as 
\begin{equation}\label{eq:explicit_choice_Mell_SLMC}
	M_\ell^{\rm opt} 
	:= 
	\begin{cases} 
		\Bigl\lceil N_\ell^{ a_0 +  \bar{q}' \! ( \alpha  - a_1 ) } \Bigr\rceil 
		&\text{ if } 
		\alpha \geq a_1, 
		\\
		\bigl\lceil N_\ell^{ a_0 +  p' \! ( \alpha  - a_1 ) } \bigr\rceil 
		&\text{ if } 
		\alpha < a_1 \text{ and  $p\in(1,2]$ is the type of $E$}.
	\end{cases} 
\end{equation}

\begin{example}\label{ex:choice_Mell_SLMC}
	Consider again the Banach space $E:=\ell^s$ which is of Rademacher type
	$p = \min\{s,2\}$ if $s\in[1,\infty)$ 
	and of type $p=1$ for $s=\infty$. 
	Let $\eta\in L^1(\Omega;\ell^s)$, $q\in(1,\infty)\cap[p,\infty)$, 
	$\bar{q}:=\min\{q,2\}$ 
	and, for every $\ell\in\bbN$, 
	let $N_\ell\in\bbN$ and  
	$(\xi_{\ell,j} )_{j\in\bbN}$ 
	be a sequence of i.i.d.\ random variables 
	in $L^q(\Omega; \ell^s_{N_\ell})$ 
	such that \eqref{eq:alpha-stab}~holds. 
	\begin{enumerate}[label={\normalfont(\alph*)}, leftmargin=7mm]
		\item 
			For $s\in[1,2)$, we have the relation  
			$\tau_r(\ell^s_{N_\ell}) 
			= 
			N_\ell^{\frac{1}{s} - \frac{1}{r} \!} 
			= 
			N_\ell^{\frac{1}{r'} - \frac{1}{s'} }$ 
			for every $\ell\in\bbN$ and all $r\in[s,2]$. 
			Thus, 
			the finite-dimensional spaces 
			$(\ell^s_{N_\ell})_{\ell\in\bbN}$ 
			satisfy~\eqref{eq:tau-poly} 
			with $a_0 = 1$, $a_1 = \frac{1}{s'}$ and 
			$C_\tau=1$, and 
			$f_{\alpha;\ell}(r) 
			= [\tau_r(\ell^s_{N_\ell}) N_\ell^{\alpha }]^{r'\!} 
			= 
			N_\ell^{1 + \left( \alpha  - \frac{1}{s'} \right) r' 
			\!}$ 
			is 
			monotonically decreasing in $r$ if $\alpha \geq \frac{1}{s'}$ 
			and strictly monotonically increasing in $r$ 
			if $\alpha < \frac{1}{s'}$.  
			We thus pick the number of Monte Carlo samples 
			as 
			\[		
				M_\ell^{\rm opt} := 
				\begin{cases} 
					\bigl\lceil [\tau_{\bar{q}}(\ell_{N_\ell}^s) N_\ell^\alpha]^{\bar{q}'} \bigr\rceil  
					= 
					\Bigl\lceil N_\ell^{ 1 +  \bar{q}' \! \left( \alpha  - \frac{1}{s'} \right) } \Bigr\rceil 
					&\text{ if } 
					\alpha \geq \frac{1}{s'}, 
					\\
					\bigl\lceil [\tau_s(\ell_{N_\ell}^s) N_\ell^\alpha]^{s'} \bigr\rceil 
					= 
					\bigl\lceil N_\ell^{ \alpha s' } \bigr\rceil 
					&\text{ if } 
					\alpha < \frac{1}{s'}.  
				\end{cases} 
			\]
			Note that, for $s=1$, 
			we obtain 
			$\tfrac{1}{s'} = 0$ and we 
			are thus in the first case 
			which simplifies to 
			$\bigl\lceil N_\ell^{ 1 +  \alpha \bar{q}'} \bigr\rceil$. 
		\item 
			For $s\in[2,\infty)$, 
			we have 
			$p = r = \bar{q} = 2$, 
			\eqref{eq:tau-poly} trivially holds 
			for $a_0 = a_1 = 0$ and 
			$C_\tau=\tau_2(\ell^s)^2 = B_s^2$ 
			(see Example~\ref{ex:Ls-type}),   
			and we pick the number of Monte Carlo samples 
			as $M_\ell^{\rm opt} 
			:= 
			\bigl\lceil [\tau_2(\ell_{N_\ell}^s) N_\ell^\alpha]^2 \bigr\rceil 
			= 
			\bigl\lceil B_s^2 N_\ell^{2\alpha} \bigr\rceil$. 
		\item 
			For $s=\infty$, the type $r$ constant of 
			$\ell_{N_\ell}^\infty$ admits, 
			for every $\ell\in\bbN$ and all ${r\in(1,2]}$, 
			the bounds 
			(see \cite[Proposition~7.1.7]{AnalysisInBanachSpacesII2017}) 
			\[
				\frac{1}{2} ( \lfloor \log_3 N_\ell \rfloor + 1)^{\frac{1}{r'}} 
				\leq 
				\tau_r(\ell_{N_\ell}^\infty) 
				\leq 
				( \max\{ 2 e \log N_\ell, 2\} )^{\frac{1}{r'} \!} . 
			\]
			Thus, 
			the condition \eqref{eq:tau-poly} is fulfilled 
			for $a_1=0$ 
			and any $a_0\in(0,\infty)$, 
			where the magnitude of 
			$C_\tau\in[1,\infty)$ depends on 
			the choice of $a_0\in(0,\infty)$ 
			(e.g., $C_\tau = 2$ for $a_0=1$).   
			As  
			$f_{\alpha;\ell}(r) = [\tau_r(\ell^\infty_{N_\ell}) N_\ell^{\alpha }]^{r'}$ 
			is monotonically decreasing in~$r$, 
			we pick, for $N_\ell\geq 2$, the number of Monte Carlo samples 
			as 
			$M_\ell^{\rm opt} := \bigl\lceil 2 e (\log N_\ell) 
			N_\ell^{\alpha\bar{q}'}\bigr\rceil$. 
	\end{enumerate} 
\end{example} 

The above discussion leads to the 
following complexity result 
for a single-level Monte Carlo estimator.

\begin{theorem}\label{thm:complexity_SLMC_dim_dep} 
	Assume that  
	$(E,\norm{\,\cdot\,}{E})$ 
	is of Rademacher type $p\in[1,2]$, 
	let ${X\in L^1(\Omega;E)}$, 
	$q\in(1,\infty)\cap[p,\infty)$, 
	and set  
	$\bar{q}:=\min\{q,2\}$.
	Let $(E_\ell)_{\ell\in \bbN}$ be a 
	family of subspaces of $E$ 
	of 
	dimension 
	$N_\ell := \dim(E_\ell) <\infty$ 
	satisfying Assumption~\ref{ass:constants_subspaces}. 
	For every $\ell\in\bbN$, 
	let $( X_{\ell,j} )_{j\in\bbN}$ 
	be a sequence of independent 
	copies of 
	an $E_\ell$-valued random variable 
	$X_\ell\in L^q(\Omega;E_\ell)$ such that 
	\[	
		\sup_{\ell\in\bbN} 
		N_\ell^\alpha 
		\bigl\| \bbE[ X ] - \bbE[ X_{\ell} ] \bigr\|_{E}  
		\leq 
		C_\alpha , 
		\qquad 
		\sup_{\ell\in\bbN} \, 
		\bigl\| X_{\ell} - \bbE[X_{\ell}] \bigr\|_{L^q(\Omega; E)} 
		\leq 
		C_{\sf stab}, 
	\]	
	for some constants~$\alpha, C_{\alpha}, C_{\sf stab}\in (0,\infty)$. 
	In addition, for all $\ell \in \bbN$, 
	let $\cC_\ell$ denote the cost 
	(number of floating point operations) 
	to generate one sample of $X_\ell$ 
	and assume that there exist 
	constants $\gamma, C_{\gamma}\in (0,\infty)$ 
	and $A\in (1,\infty)$ such that 
	$\cC_\ell\leq C_\gamma N_\ell^\gamma$ 
	and $N_\ell \eqsim A^\ell$.
	Then, for every $\epsilon\in(0,\nicefrac{1}{2}]$, 
	there exist integers $L\in\bbN$ and  
	$M_L \in\bbN$ such 
	that the $L^q$-accuracy $\epsilon$ of 
	the single-level Monte Carlo 
	estimator for $\bbE[X]$, 
	\[ 
		\mathrm{err}^{\sf SL}_{q}(X)
		:=  
		\biggl\| 
		\bbE[X] 
		-
		\frac{1}{M_L} 
		\sum_{j=1}^{M_L}  
		X_{L,j}
		\biggr\|_{L^q(\Omega; E)} 
		<\epsilon, 
	\]
	can be achieved at a computational cost of order
	\begin{equation}\label{eq:complexity_SLMC_dim_dep}
		\cC^{\sf SL}_{q}(X) 
		\lesssim
		\begin{cases} 
				\epsilon^{-\frac{\gamma}{\alpha}} +\epsilon^{-\bar{q}'-\frac{\gamma+a_0-a_1\bar{q}'}{\alpha}}  & 
				\text{ if }  
				\alpha \geq a_1, 
				\\
				\epsilon^{-\frac{\gamma}{\alpha}} 
				+
				\epsilon^{-p'-\frac{\gamma+a_0-a_1 p'}{\alpha}} 
				 & 
				\text{ if }  
				\alpha <  a_1 
				\text{ and  $p\in(1,2]$}. 	
				\end{cases}
	\end{equation}
\end{theorem}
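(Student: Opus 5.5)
Our approach is to apply estimate \eqref{eq:dim-dep-estimate_SLMC} with the single level $L$ and the sample number $M_L^{\rm opt}$ from \eqref{eq:explicit_choice_Mell_SLMC}, then choose $L$ so that the bias falls below $\epsilon/2$, and finally multiply sample count by cost per sample.

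Set $\eta := X$ and $\xi_{L,j} := X_{L,j}$. The hypotheses of the theorem are exactly \eqref{eq:alpha-stab}, so Proposition~\ref{prop:dim-dep-MC-1} gives \eqref{eq:dim-dep-estimate_SLMC} for every $r\in[1,\bar q]$. We distinguish the two cases.

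\emph{Case $\alpha\ge a_1$.} Take $r=\bar q$. Note that $\bar q\in[p,2]\cap(1,2]$, because $q\ge p$ and $q>1$. Assumption~\ref{ass:constants_subspaces} then gives $[\tau_{\bar q}(E_L)]^{\bar q'}\le C_\tau N_L^{a_0-a_1\bar q'}$. With $M_L=\lceil N_L^{a_0+\bar q'(\alpha-a_1)}\rceil$ we get
\[
\tau_{\bar q}(E_L)M_L^{-1/\bar q'}\le C_\tau^{1/\bar q'}N_L^{(a_0-a_1\bar q')/\bar q'}N_L^{-(a_0+\bar q'(\alpha-a_1))/\bar q'}=C_\tau^{1/\bar q'}N_L^{-\alpha}.
\]
Hence the total error is at most $(C_\alpha+2K_{q,1}C_{\sf stab}C_\tau^{1/\bar q'})N_L^{-\alpha}$.

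\emph{Case $\alpha<a_1$, $p\in(1,2]$.} Take $r=p$ instead. The identical computation with $\bar q'$ replaced by $p'$ gives the same bound $CN_L^{-\alpha}$.

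\emph{Choice of $L$.} Let $L$ be the smallest integer with $CN_L^{-\alpha}<\epsilon$. Since $N_\ell\eqsim A^\ell$ with $A>1$, minimality gives $N_{L-1}^{-\alpha}\gtrsim\epsilon$. Using $N_L\lesssim A N_{L-1}$, this yields $N_L\lesssim\epsilon^{-1/\alpha}$. We also need $L\ge1$ and should track the constants; the restriction $\epsilon\le 1/2$ lets any additive constants be absorbed.

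\emph{Cost.} The cost is $M_L\,\cC_L\le (N_L^{\kappa}+1)\,C_\gamma N_L^\gamma$, where $\kappa=a_0+\bar q'(\alpha-a_1)$ in the first case and $\kappa=a_0+p'(\alpha-a_1)$ in the second. The ceiling contributes the "$+1$", which produces the term $\epsilon^{-\gamma/\alpha}$. The main term is $N_L^{\gamma+\kappa}\lesssim\epsilon^{-(\gamma+\kappa)/\alpha}$. Expanding the exponent gives $(\gamma+\kappa)/\alpha=\bar q'+(\gamma+a_0-a_1\bar q')/\alpha$, which matches \eqref{eq:complexity_SLMC_dim_dep}; the second case is analogous with $p'$.

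\emph{Main subtlety.} The delicate point is that $\kappa$ may be negative, so that $M_L=1$. The bound $N_L^{\kappa}+1$ handles this, since the cost is then dominated by $\epsilon^{-\gamma/\alpha}$. The upper bound $N_L\lesssim\epsilon^{-1/\alpha}$ suffices only when the exponent $\gamma+\kappa$ is nonnegative; otherwise that term is already dominated by the $\epsilon^{-\gamma/\alpha}$ term. Everything else is routine bookkeeping.
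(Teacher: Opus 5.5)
Your proof is correct and follows the paper's argument. You pick $r=\bar q$ (resp.\ $r=p$), bound $\tau_r(E_L)$ via Assumption~\ref{ass:constants_subspaces}, take $M_L$ from \eqref{eq:explicit_choice_Mell_SLMC}, choose $L$ minimal so that the combined constant times $N_L^{-\alpha}$ is below $\epsilon$, and bound the cost by $C_\gamma N_L^\gamma\lceil N_L^{\kappa}\rceil$ with $N_L\eqsim\epsilon^{-1/\alpha}$. There are two cosmetic differences: the paper uses the uniform constant $C_\tau^{1/2}\geq C_\tau^{1/\bar q'}$ (valid since $C_\tau\geq 1$ and $\bar q'\geq 2$), and your opening mentions an $\epsilon/2$ bias split that your argument never actually uses.
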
 

\begin{proof}
	By assumption we may choose 
	$L\in\bbN$ 
	as the smallest integer 
	such that 
	$N_L^{-\alpha} 
	\bigl( C_\alpha + 2 K_{q,1} C_{\sf stab} C_\tau^{1/2} \bigr) < \epsilon$. 
	Defining $M_L$ as 
	in \eqref{eq:explicit_choice_Mell_SLMC} 
	with $\ell:=L$ 
	then yields 
	\[
		\mathrm{err}^{\sf SL}_{q}(X)< \epsilon, 
	\]
	cf.~\eqref{eq:dim-dep-estimate_SLMC}. 
	The cost of this estimator can be bounded as
	\begin{equation}\label{eq:cost-SL-1-bound} 
	\begin{split}  
		\cC^{\sf SL}_{q}(X) 
		\eqsim 
		\cC_L M_L  
		&\leq 
		\begin{cases} 
				C_\gamma N_L^\gamma 
				\Bigl\lceil N_L^{a_0+\bar{q}^\prime(\alpha-a_1)} \Bigr\rceil  
				& 
				\text{ if }  
				\alpha \geq a_1, 
				\\[6pt]
				C_\gamma N_L^\gamma 
				\Bigl\lceil N_L^{a_0+ p^\prime(\alpha-a_1)} \Bigr\rceil  
				& 
				\text{ if }  
				\alpha <  a_1 
				\text{ and  $p\in(1,2]$}, 
				\end{cases}
		\\
		&\lesssim_{(\alpha,\gamma,q,A)}
		\begin{cases} 
				\epsilon^{-\frac{\gamma}{\alpha}} +\epsilon^{-\bar{q}'-\frac{\gamma+a_0-a_1\bar{q}'}{\alpha}}  
				& 
				\text{ if }   
				\alpha \geq a_1, 
				\\
				\epsilon^{-\frac{\gamma}{\alpha}} +
				\epsilon^{- p'-\frac{\gamma+a_0-a_1 p'}{\alpha}} 
				& 
				\text{ if }   
				\alpha <  a_1 
				\text{ and  $p\in(1,2]$}, \hspace*{-5mm} 			
		\end{cases}
	\end{split} 
	\end{equation}
	which concludes the proof.
\end{proof}

\begin{remark}
	The complexity bound 
	of Theorem~\ref{thm:complexity_SLMC_dim_dep} 
	identifies two different cases: 
	If the rate~$\alpha$ is sufficiently large, i.e., 
	$\alpha \geq a_1$, 
	then the single-level Monte Carlo error achieves 
	an error of size~$\epsilon$ with an asymptotic cost 
	that is \emph{independent} of the type~$p$ of~$E$, 
	and only determined by the integrability parameter $\bar{q}$, 
	and constants~$a_0$, $a_1$. 
	Conversely, if $\alpha < a_1$, 
	achieving the desired accuracy 
	requires a higher-dimensional subspace $E_L\subseteq E$, 
	and then the asymptotic complexity 
	does depend on the type $p$.
	
	Furthermore, note that if, in addition to 
	Assumption~\ref{ass:constants_subspaces}, 
	one assumes that the constants $a_0,a_1$ 
	satisfy $a_0 - a_1 p' = 0$ 
	(which, e.g., is satisfied by the spaces 
	$\ell^s$ for all $s\in (1,\infty)$), 
	then the second case of 
	\eqref{eq:complexity_SLMC_dim_dep} 
	reduces to 
	$\cC^{\sf SL}_{q}(X) 
	\lesssim \epsilon^{-p'-\frac{\gamma}{\alpha}}$ 
	which is the same asymptotic complexity obtained 
	in a Monte Carlo error analysis that does not account for 
	dimension-dependent type constants \cite{KKChS2024}.
\end{remark}

\subsection{Multilevel Monte Carlo methods}\label{sec:MLMC}
In this subsection we focus on the analysis of 
multilevel Monte Carlo methods.
To this end, we make an additional
assumption on the  
finite-dimensional subspaces $(E_\ell)_{\ell\in\bbN}$. 

\begin{assumption}[Nestedness]
\label{ass:subspaces-nested} 
	The family $(E_\ell)_{\ell\in\bbN}$ of 
	subspaces  
	of $(E,\norm{\,\cdot\,}{E})$ 
	is nested, 
	\[
		E_1\subseteq E_2 \subseteq \ldots 
		\subseteq E_\ell \subseteq \ldots \subseteq E . 
	\]
\end{assumption}

\begin{proposition}\label{prop:dim-dep-MLMC-1}
	Suppose that $(E_\ell)_{\ell\in\bbN}$ 
	are finite-dimensional subspaces 
	of~$E$ satisfying  
	Assumption~\ref{ass:subspaces-nested}. 
	Let ${X\in L^1(\Omega;E)}$, 
	$q\in[1,\infty)$,  
	and define ${\bar{q}:=\min\{q,2\}}$. 
	Assume further that $L\in\bbN$ and, 
	for every $\ell\in\{1,\ldots,L\}$, 
	${X_\ell \in L^q(\Omega;E_\ell)}$,  
	$M_\ell\in\bbN$, and 
	$\xi_{\ell,1},\ldots,\xi_{\ell,M_\ell}$ 
	are independent 
	copies of the $E_\ell$-valued 
	random variable 
	\begin{equation}\label{eq:def:xi-ell} 
		\xi_\ell 
		:= 
		X_\ell - X_{\ell-1} \in L^q ( \Omega; E_\ell ), 
		\qquad 
		X_0 := 0\in E_1. 
	\end{equation}
	Then, we have, for all $r\in[1,\bar{q}]$, 
	\begin{align*} 
		\biggl\| 
		&\bbE[X]
		-
		\sum_{\ell = 1}^{L} 
		\frac{1}{M_\ell} 
		\sum_{j=1}^{M_\ell}  
		\xi_{\ell,j}
		\biggr\|_{L^q(\Omega; E)} 
		\leq 
		\bigl\| 
		\bbE[X] - \bbE[X_L ] 
		\bigr\|_{E}
		\\
		&\qquad\quad +   
		2 K_{q,r} 
		\sum_{\ell=1}^L    
		\Bigl[ 
		\tau_r(E_\ell) 
		M_\ell^{-\left( 1 - \frac{1}{r} \right)} 
		\bigl\| X_\ell - \bbE[X_\ell] 
		- ( X_{\ell-1} - \bbE[X_{\ell-1}] ) \bigr\|_{L^q(\Omega;E)} 
		\Bigr] .  
	\end{align*} 
\end{proposition}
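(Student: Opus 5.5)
The plan is to split the error into a deterministic bias and a centered sampling part, and then bound the sampling part level by level. By linearity of expectation and the telescoping structure \eqref{eq:def:xi-ell} with $X_0=0$, we have $\sum_{\ell=1}^L \bbE[\xi_\ell]=\bbE[X_L]$. Hence
\[
	\bbE[X]-\sum_{\ell=1}^L\frac{1}{M_\ell}\sum_{j=1}^{M_\ell}\xi_{\ell,j}
	=\bigl(\bbE[X]-\bbE[X_L]\bigr)-\sum_{\ell=1}^L\frac{1}{M_\ell}\sum_{j=1}^{M_\ell}\bigl(\xi_{\ell,j}-\bbE[\xi_{\ell}]\bigr).
\]
The first term is deterministic, so its $L^q(\Omega;E)$-norm equals $\|\bbE[X]-\bbE[X_L]\|_E$. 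For the second term we apply the triangle inequality in $L^q(\Omega;E)$ over the levels $\ell=1,\dots,L$. This is where a per-level type constant can be used, and it is why the bound carries a sum over $\ell$ rather than a single type constant.

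For a fixed level $\ell$, the centered variables $\eta_j:=\xi_{\ell,j}-\bbE[\xi_\ell]$ are i.i.d.\ and take values in $E_\ell$. This uses Assumption~\ref{ass:subspaces-nested}: $X_{\ell-1}\in E_{\ell-1}\subseteq E_\ell$, so both $\xi_\ell$ and its mean lie in $E_\ell$. We then proceed in three steps.

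\textbf{Symmetrization.} Lemma~\ref{lem:symmetrization}, applied on a product space with an independent Rademacher family, gives $\|\sum_j\eta_j\|_{L^q}\le 2\|\sum_j r_j\eta_j\|_{L^q}$.

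\textbf{Conditioning and type.} Conditionally on $\omega$, the Kahane--Khintchine inequality (Definition~\ref{def:kahane-khintchine}) gives $\|\sum_j r_j\eta_j(\omega)\|_{L^q(\widetilde\Omega;E)}\le K_{q,r}\|\sum_j r_j\eta_j(\omega)\|_{L^r(\widetilde\Omega;E)}$. The type $r$ property of $E_\ell$ then bounds the right-hand side by $K_{q,r}\tau_r(E_\ell)\bigl(\sum_j\|\eta_j(\omega)\|_E^r\bigr)^{1/r}$.

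\textbf{Taking the $L^q(\Omega)$-norm.} Since $r\le q$, the map $g\mapsto \|g\|^{1/r}_{L^{q/r}(\Omega)}$ applied to $g=\sum_j\|\eta_j\|^r$ satisfies the triangle inequality in $L^{q/r}(\Omega)$. Hence
\[
	\Bigl\|\bigl(\textstyle\sum_j\|\eta_j\|_E^r\bigr)^{1/r}\Bigr\|_{L^q(\Omega)}\le\Bigl(\textstyle\sum_j\|\eta_j\|_{L^q(\Omega;E)}^r\Bigr)^{1/r}=M_\ell^{1/r}\|\eta_1\|_{L^q(\Omega;E)}.
\]

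Dividing by $M_\ell$ gives the factor $M_\ell^{-(1-1/r)}$. Finally, $\eta_1=X_\ell-\bbE[X_\ell]-(X_{\ell-1}-\bbE[X_{\ell-1}])$, which is the norm appearing in the statement. Alternatively, the single-level part of this argument is exactly what \cite[Corollary~3.15]{KKChS2024} encodes, as in Proposition~\ref{prop:dim-dep-MC-1}, so we could invoke it per level with $E_\ell$ in place of $E$.

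The main obstacle is the last step: the interchange between the $L^q(\Omega)$-norm and the $\ell^r$-sum. It is exactly what forces the restriction $r\le\bar q$. The constraint $r\le q$ makes $L^{q/r}$ a normed space, and $r\le 2$ is needed for the type notion to make sense. We also have to check that measurability on the product space $\Omega\times\widetilde\Omega$ is fine for the Fubini/conditioning argument; this is standard.
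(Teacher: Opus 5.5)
Your proposal is correct and follows essentially the same route as the paper. Both use the identity $\sum_{\ell}\bbE[\xi_\ell]=\bbE[X_L]$, then the triangle inequality over the levels, and then the single-level bound of Proposition~\ref{prop:dim-dep-MC-1} (i.e.\ \cite[Corollary~3.15]{KKChS2024}) on each $E_\ell$. The only difference is that you also unpack that single-level estimate (symmetrization, Kahane--Khintchine, type $r$ of $E_\ell$, and the triangle inequality in $L^{q/r}(\Omega)$), which the paper simply cites.
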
 

\begin{proof} 
	First note that, for every $\ell\in\{1,\ldots,L\}$ 
	the random variables 
	$\xi_{\ell,1},\ldots,\xi_{\ell,M_\ell}$ 
	are identically distributed 
	and we have that 
	\[ 
		\bbE\Biggl[ 
		\sum_{\ell = 1}^{L} 
		\frac{1}{M_\ell} 
		\sum_{j=1}^{M_\ell}  
		\xi_{\ell,j} \Biggr] 
		= 
		\sum_{\ell = 1}^{L} 
		\bbE[ \xi_\ell ] 
		=
		\sum_{\ell = 1}^{L} 
		\bigl( \bbE[ X_\ell ] - \bbE[ X_{\ell-1} ] \bigr) 
		= 
		\bbE[X_L] . 
	\] 
	Thus, we find by the triangle inequality 
	on $L^q(\Omega; E)$ that 
	\[ 
		\biggl\| 
		\bbE[X] 
		-
		\sum_{\ell = 1}^{L} 
		\frac{1}{M_\ell} 
		\sum_{j=1}^{M_\ell}  
		\xi_{\ell,j}
		\biggr\|_{L^q(\Omega; E)} 
		\leq 
		\bigl\| 
		\bbE[X - X_L ] 
		\bigr\|_{E} 
		+ 
		\sum_{\ell=1}^L 
		\biggl\| 
		\bbE[\xi_\ell ] - 
		\frac{1}{M_\ell} 
		\sum_{j=1}^{M_\ell}  
		\xi_{\ell,j}
		\biggr\|_{L^q(\Omega; E)} \!. 
	\] 
	By Proposition~\ref{prop:dim-dep-MC-1} above we obtain, 
	for every $\ell\in\{1,\ldots,L\}$ and all $r\in[1,\bar{q}]$, that
	\[
		\biggl\| 
		\bbE[\xi_\ell ] - 
		\frac{1}{M_\ell} 
		\sum_{j=1}^{M_\ell}  
		\xi_{\ell,j}
		\biggr\|_{L^q(\Omega; E)} 
		\leq 
		2 K_{q,r} \tau_r(E_\ell) 
		M_\ell^{-\left(1-\frac{1}{r}\right)} 
		\bigl\| \xi_{\ell} - \bbE[\xi_{\ell}] \bigr\|_{L^q(\Omega;E)},
	\]
	which leads to the claimed estimate.
\end{proof}

\begin{theorem}\label{thm:dim-dep-alpha-beta-gamma-1} 
	Suppose that 
	$(E,\norm{\,\cdot\,}{E})$ 
	is of Rademacher type $p\in[1,2]$ and 
	let $(E_\ell)_{\ell\in \bbN}$ be a 
	family of subspaces of $E$ 
	of 
	dimension 
	$N_\ell := \dim(E_\ell) <\infty$ 
	satisfying 
	Assumptions~\ref{ass:constants_subspaces} 
	and~\ref{ass:subspaces-nested}. 
	Let 
	${X \in L^1(\Omega; E)}$, 
	$q\in(1,\infty)\cap [p,\infty)$, and 
	define ${\bar{q}:=\min\{q,2\}}$. 
	For all ${\ell\in\bbN}$,
	let 
	$X_\ell \in L^q(\Omega;E_\ell)$ 
	be an 
	$E_\ell$-valued random variable 
	and define   
	$\xi_\ell  \in L^q ( \Omega; E_\ell )$ 
	as in \eqref{eq:def:xi-ell}. 
	For every $\ell\in\bbN$, 
	let $\cC_\ell$ denote the cost 
	(number of floating point operations)
	to generate one sample of 
	the random variable $\xi_\ell$ 
	in~\eqref{eq:def:xi-ell}, 
	and suppose that 
	there exist constants 
	${\alpha,\beta,\gamma,C_\alpha,C_\beta,C_\gamma\in(0,\infty)}$ 
	and 
	$A\in(1,\infty)$ such that  
	$N_\ell \eqsim A^\ell$ 
	for every $\ell\in\bbN$ and, moreover, 
	\begin{align} 
		\quad 
		\forall \ell\in\bbN : 
		&&
		\bigl\| \bbE[X] - \bbE[X_\ell] \bigr\|_E   
		&\leq 
		C_\alpha N_\ell^{-\alpha} \!,
		\qquad
		\tag{$\alpha$} 
		\label{eq:ass:alpha-dim-dep-1} 
		\\
		\quad
		\forall \ell\in\bbN : 
		&&
		\| X_\ell - \bbE[X_\ell] - 
			(X_{\ell-1} - \bbE[X_{\ell-1}]) \|_{L^q(\Omega;E)} 
		&\leq 
		C_\beta N_\ell^{-\beta} \!, 
		\qquad 
		\tag{$\beta$} 
		\label{eq:ass:beta-dim-dep-1} 
		\\
		\quad
		\forall \ell\in\bbN : 
		&&
		\cC_\ell 
		&\leq 
		C_\gamma N_\ell^\gamma . 
		\qquad 
		\tag{$\gamma$} 
		\label{eq:ass:gamma-dim-dep-1} 
	\end{align} 
	Furthermore, for each $\ell\in\bbN$, 
	let $( \xi_{\ell,j} )_{j\in\bbN} \subset L^q ( \Omega; E_\ell )$ 
	be a sequence of independent 
	copies of 
	the $E_\ell$-valued random variable 
	$\xi_\ell$ in \eqref{eq:def:xi-ell}. 
	
	Then, for every $\epsilon\in(0,\nicefrac{1}{2}]$, 
	there exist integers 
	$L\in\bbN$ 
	and 
	$M_1,\ldots,M_L \in \bbN$ such 
	that the $L^q$-accuracy $\epsilon$ of 
	the multilevel Monte Carlo 
	estimator for $\bbE[X]$, 
	\begin{equation}\label{eq:MLMC-err-dim-dep-1} 
		\mathrm{err}^{\sf ML}_{q}(X)
		:=  
		\biggl\| 
		\bbE[X] 
		-
		\sum_{\ell = 1}^{L} 
		\frac{1}{M_\ell} 
		\sum_{j=1}^{M_\ell}  
		\xi_{\ell,j}
		\biggr\|_{L^q(\Omega; E)} 
		<\epsilon ,  
	\end{equation} 
	can be achieved at a computational 
	cost of order 
	\begin{equation}\label{eq:MLMC-cost-dim-dep-1}  
		\cC^{\sf ML}_{q}(X) 
		\lesssim
		\begin{cases} 
		\epsilon^{-\frac{\gamma}{\alpha}} 
		+ 
		\epsilon^{-\bar{q}'} 
		&\text{ if } 
		t<\bar{q}'\!, 
		\\[2pt]
		\epsilon^{-\frac{\gamma}{\alpha}} 
		+ 
		\epsilon^{-\bar{q}'} 
		|\log_A \epsilon|^{\bar{q}'+1} 
		&\text{ if }  
		t = \bar{q}'\!,
		\\[2pt]
		\epsilon^{-\frac{\gamma}{\alpha}} 
		+ 
		\epsilon^{-\bar{q}' - \frac{\gamma + a_0 - (\beta+a_1) \bar{q}' \!}{\alpha}} 
		&\text{ if } 
		t \in(\bar{q}'\!,p'] 
		\;\; \text{and} \;\; 
		\alpha \geq \beta+a_1, 
		\\[2pt]
		\epsilon^{-\frac{\gamma}{\alpha}} 
		+ 
		\epsilon^{-t} 
		|\log_A \epsilon|^{t+1} 
		&\text{ if } 
		t \in(\bar{q}'\!,p'] 
		\;\; \text{and} \;\; 
		\alpha < \beta+a_1, 
		\\[2pt]
		\epsilon^{-\frac{\gamma}{\alpha}} 
		+ 
		\epsilon^{-\bar{q}' - \frac{\gamma + a_0 - (\beta+a_1) \bar{q}' \!}{\alpha}} 
		&\text{ if }  
		t > p'\! \;\;\text{and}\;\; 
		\alpha \geq \beta+a_1, 
		\\[2pt]
		\epsilon^{-\frac{\gamma}{\alpha}} 
		+ 
		\epsilon^{-p' - \frac{\gamma + a_0 - (\beta+a_1) p' \!}{\alpha}} 
		&\text{ if }  
		t > p'\! \;\;\text{and}\;\; 
		\alpha < \beta+a_1, 
		\end{cases}
	\end{equation} 
	where $t := \frac{\gamma+a_0}{\beta+a_1}$. 
\end{theorem}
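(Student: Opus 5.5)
The plan is the standard Giles-type MLMC complexity argument, built on Proposition~\ref{prop:dim-dep-MLMC-1} with a level-dependent choice of the type parameter $r$. First pick $L$ as the smallest integer with $C_\alpha N_L^{-\alpha}<\epsilon/2$. Since $N_\ell\eqsim A^\ell$, this gives $N_L\eqsim\epsilon^{-1/\alpha}$ and $L\eqsim|\log_A\epsilon|$, and the bias term in Proposition~\ref{prop:dim-dep-MLMC-1} is at most $\epsilon/2$. It then remains to bound the statistical term
\[
2K_{q,1}C_\beta\sum_{\ell=1}^L \tau_r(E_\ell)M_\ell^{-1/r'}N_\ell^{-\beta}
\]
by $\epsilon/2$. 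Here $K_{q,r}\le K_{q,1}$ by \eqref{eq:bound_Kahane}, and I allow $r=r_\ell\in R(p,q)$ to vary with $\ell$; this is legitimate because Proposition~\ref{prop:dim-dep-MC-1} is applied level by level.

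By Assumption~\ref{ass:constants_subspaces},
\[
\tau_r(E_\ell)^{r'}\le C_\tau N_\ell^{a_0-a_1r'},
\qquad\text{so}\qquad
\tau_r(E_\ell)N_\ell^{-\beta}\le C_\tau^{1/r'}N_\ell^{a_0/r'-(a_1+\beta)}.
\]
The $\ell$-th statistical contribution is therefore controlled by
\[
\bigl(C_\tau N_\ell^{a_0-(\beta+a_1)r'}/M_\ell\bigr)^{1/r'}.
\]
This has the same form as a classical MLMC analysis with a variance-type rate $v_\ell=N_\ell^{a_0-(\beta+a_1)r'}$ in place of $N_\ell^{-\beta r'}$, and cost $N_\ell^\gamma$. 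The exponent $r$ enters twice: in the sampling rate $r'$ and in $v_\ell$. This explains the threshold $t=(\gamma+a_0)/(\beta+a_1)$, because the per-level work $M_\ell\mathcal C_\ell$ is balanced across levels exactly when $\gamma+a_0=(\beta+a_1)r'$.

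Next, fix a single $r$ (either $\bar q$ or $p$) and choose
\[
M_\ell=\bigl\lceil \lambda\, N_\ell^{a_0-(\beta+a_1)r'} N_\ell^{-\theta}\bigr\rceil.
\]
The weight $N_\ell^{-\theta}$ is a geometric level weight; $\theta$ and $\lambda$ are chosen so that the sum of errors is at most $\epsilon/2$. The cost is $\sum_\ell M_\ell C_\gamma N_\ell^\gamma$, and the ceiling contributes $\sum_\ell N_\ell^\gamma\lesssim N_L^\gamma\eqsim\epsilon^{-\gamma/\alpha}$; this is the common first term in every case. The cases are then handled as follows.
\begin{itemize}
\item If $t<\bar q'$, take $r=\bar q$. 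The exponent $\gamma+a_0-(\beta+a_1)\bar q'$ is negative, the cost is dominated by the coarse levels, and a geometric weighting gives $\epsilon^{-\bar q'}$.
\item If $t=\bar q'$, take $r=\bar q$ and equal weights. The sum has $L$ equal terms, so Hölder gives $\lambda\eqsim\epsilon^{-\bar q'}L^{\bar q'}$ and cost $\lambda L$, i.e. $|\log\epsilon|^{\bar q'+1}$.
\item If $t>\bar q'$, the fine levels dominate, with cost $\lesssim\epsilon^{-r'}N_L^{\gamma+a_0-(\beta+a_1)r'}$. This is $\epsilon^{-r'-(\gamma+a_0-(\beta+a_1)r')/\alpha}$, as in Theorem~\ref{thm:complexity_SLMC_dim_dep} with $\alpha$ replaced by $\beta+a_1$ in the exponent. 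Minimizing over $r'\in[\bar q',p']$: the exponent is affine in $r'$ with slope $1-(\beta+a_1)/\alpha$. If $\alpha\ge\beta+a_1$, it is minimized at $r'=\bar q'$, giving cases three and five. If $\alpha<\beta+a_1$ and $t\le p'$, choose $r'=t$, which reduces to the critical case $t=r'$ and yields $\epsilon^{-t}|\log_A\epsilon|^{t+1}$ (case four). If $t>p'$, the best admissible choice is $r'=p'$ (case six).
\end{itemize}
Note that $r'=t$ is admissible because $r\in R(p,q)$ corresponds to $r'\in[\bar q',p']$.

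The main obstacle I expect is the bookkeeping in the critical and fine-dominated regimes. One must verify that a fixed exponent $r$ across all levels suffices, and that the $\lceil\cdot\rceil$ terms together with the constant $C_\tau^{1/r'}\le C_\tau$ stay uniform. One must also check that the choice $r'=t$ in case four produces exactly the logarithmic power $t+1$. This comes from the Hölder or Lagrange-multiplier optimization $\sum_\ell (v_\ell/M_\ell)^{1/r'}\le\epsilon$ with minimal $\sum_\ell M_\ell N_\ell^\gamma$, whose optimum is $M_\ell\propto (v_\ell N_\ell^{-\gamma r'})^{1/(r'+1)}$. I would carry out this optimization once for general $r$ and then specialize it to each case.
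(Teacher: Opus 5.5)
Your proposal is correct and follows essentially the same route as the paper. You fix a single $r\in R(p,q)$ and combine Proposition~\ref{prop:dim-dep-MLMC-1} with Assumption~\ref{ass:constants_subspaces}, which gives the effective rate $N_\ell^{a_0-(\beta+a_1)r'}$. You then use the Lagrange-optimal allocation $M_\ell\propto N_\ell^{(a_0-(\beta+a_1+\gamma)r')/(r'+1)}$; this is exactly the paper's choice, normalized there by $N_L^{\alpha r'}S_{L;r}^{r'}$. From there you read off the three regimes $r'\gtrless t$ and pick $r'\in[\bar q',p']$ case by case as the paper does.

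The only blemishes are cosmetic. The fine-level cost $\epsilon^{-r'-(\gamma+a_0-(\beta+a_1)r')/\alpha}$ is the single-level bound with $a_1$ (not $\alpha$) replaced by $\beta+a_1$. The level-dependent $r_\ell$ you allow at the start is never needed.
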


\begin{proof} 
	We will show by explicit construction that, for every $\epsilon\in(0,\nicefrac{1}{2}]$, 
	assumptions \eqref{eq:ass:alpha-dim-dep-1}, \eqref{eq:ass:beta-dim-dep-1} 
	and \eqref{eq:ass:gamma-dim-dep-1}  
	allow to choose 
	the algorithmic steering parameters $L\in\bbN$ 
	and $M_1,\ldots,M_L\in\bbN$ so that 
	\eqref{eq:MLMC-err-dim-dep-1} holds 
	with cost \eqref{eq:MLMC-cost-dim-dep-1}.  
	
	To this end, let 
	$r\in R(p,q) := [p,q]\cap (1,2]$. 
	By Proposition~\ref{prop:dim-dep-MLMC-1} 
	as well as 
	Assumption~\ref{ass:constants_subspaces}, and 
	conditions \eqref{eq:ass:alpha-dim-dep-1}, 
	\eqref{eq:ass:beta-dim-dep-1},   
	we obtain 
	the estimate 
	\begin{align}
		\mathrm{err}^{\sf ML}_{q}(X)
		&\leq 
		\mathrm{err}^{\sf ML}_{q;r}(X)
		:= 
		\bigl\| \bbE[X] - \bbE[X_L] \bigr\|_E   
		\notag 
		\\
		&\quad +   
		2 K_{q,r} 
		\sum_{\ell=1}^L    
		\Bigl[ 
		\tau_r(E_\ell) 
		M_\ell^{-\left( 1 - \frac{1}{r} \right)} 
		\bigl\| X_\ell - \bbE[X_\ell] 
		- ( X_{\ell-1} - \bbE[X_{\ell-1}] ) \bigr\|_{L^q(\Omega;E)} 
		\Bigr] 
		\notag 
		\\
		&\leq C_\alpha N_L^{-\alpha}  
		+ 
		2 K_{q,r} C_\beta
		\sum_{\ell=1}^L   
		\Bigl[ 
		\tau_r(E_\ell) 
		M_\ell^{ - \frac{1}{r'}\! }  
		N_\ell^{-\beta} 
		\Bigr] 
		\notag 
		\\
		&\leq C_\alpha N_L^{-\alpha}  
		+ 
		2 K_{q,1} C_\beta C_\tau^{\frac{1}{r'}}
		\sum_{\ell=1}^L   
		\Bigl[ 
		M_\ell^{ - \frac{1}{r'}\! }  
		N_\ell^{\frac{a_0}{r'}\!} 
		N_\ell^{ -(\beta+a_1) } 
		\Bigr] . 
		\label{eq:bound_proof_complexity_MLMC}
	\end{align} 
	Choose $L\in\bbN$ as the smallest integer such that 
	$N_L^{-\alpha} 
	< 
	\bigl( C_\alpha + 
	2 K_{q,1} C_\beta C_\tau^{ 1/2} \bigr)^{-1} 
	\epsilon$ holds 
	and, for every $\ell\in\{1,\ldots,L\}$, let $M_\ell\in\bbN$ be 
	defined as 
	\[
		M_\ell 
		= 
		\biggl\lceil 
		N_L^{ \alpha r' }
		S_{L;r}^{r'}  
		N_\ell^{-\frac{ (\beta+a_1+\gamma) r'\! - a_0 }{r'+1}} 
		\biggr\rceil, 
		\qquad 
		\text{where} 
		\qquad 
		S_{L;r}
		:= 
		\sum_{\ell=1}^L 
		N_{\ell}^{\frac{ \gamma + a_0 - (\beta + a_1) r'\! }{r' +1}}. 
	\] 
	The magnitude of $S_{L;r}$ 
	behaves asymptotically (for $L$ large) 
	as 
	\begin{equation}\label{eq:theta-L} 
		S_{L;r} 
		\eqsim_{(\beta,\gamma,a_0,a_1,A,r)} 
		\begin{cases} 
		1 & 
		\text{ if } 
		(\beta+a_1) r' > \gamma + a_0, 
		\\
		L & 
		\text{ if } 
		(\beta+a_1) r' = \gamma + a_0, 
		\\
		N_L^{\frac{\gamma+a_0-(\beta+a_1) r' \! }{r'+1}} 
		&\text{ if } 
		(\beta+a_1) r' < \gamma + a_0. 
		\end{cases}
	\end{equation}
	For this choice of $L$ and 
	$M_1,\ldots,M_L$, we can bound the error 
	as follows, 
	\begin{align*} 
		\mathrm{err}^{\sf ML}_{q;r}(X)
		&\leq 
		C_\alpha 
		N_L^{-\alpha}  
		+ 
		2 K_{q,1} C_\beta  C_\tau^{\frac{1}{2}} 
		\sum_{\ell=1}^{L}   
		\Bigl[ 
		N_L^{-\alpha} 
		S_{L;r}^{-1} 
		N_\ell^{\frac{\beta+a_1+\gamma-a_0/r' \!}{r'+1}} 
		N_{\ell}^{\frac{a_0}{r'}} N_\ell^{-(\beta+a_1)} 
		\Bigr] 
		\\
		&= 
		N_L^{-\alpha}  
		\Biggl( 
		C_\alpha 
		+ 
		2 K_{q,1} C_\beta  C_\tau^{\frac{1}{2}} 
		S_{L;r}^{-1} 
		\sum_{\ell=1}^L 
		N_\ell^{\frac{\gamma + a_0 -(\beta+a_1) r' \! }{r'+1}}
		\Biggr) 
		<
		\epsilon . 
	\end{align*} 
	For the total cost, 
	we first compute 
	\begin{align*} 
		\cC^{\sf ML}_{q;r}(X) 
		\eqsim 
		\sum_{\ell=1}^L \cC_\ell  M_\ell 
		&\leq   
		C_\gamma 
		\sum_{\ell=1}^L N_\ell^\gamma 
		\biggl( 
		1 + 
		N_L^{ \alpha r' }
		S_{L;r}^{r'}  
		N_\ell^{-\frac{ (\beta+a_1+\gamma) r'\! - a_0 }{r'+1}} 
		\biggr) 
		\\
		&=  
		C_\gamma 
		\sum_{\ell=1}^L N_\ell^\gamma 
		+ 
		C_\gamma 
		N_L^{ \alpha r' }
		S_{L;r}^{r'}  
		\sum_{\ell=1}^L 
		N_\ell^{\frac{\gamma + a_0 - (\beta+a_1) r' \!}{r'+1}} 
		\\
		&= 
		C_\gamma 
		\sum_{\ell=1}^L N_\ell^\gamma 
		+ 
		C_\gamma 
		N_L^{ \alpha r' }
		S_{L;r}^{r'\!+1} \!.  
	\end{align*}
	By the choice of $L$ 
	we have  
	$A^L \eqsim  N_L \eqsim \epsilon^{-\nicefrac{1}{\alpha}}$ 
	and, since $\epsilon\in(0,\nicefrac{1}{2}]$, 
	we find that $L \eqsim_{\alpha} |\log_A \epsilon|$. 
	Thus, using \eqref{eq:theta-L} 
	we conclude that the computational cost satisfies
	\begin{align*} 
		\cC^{\sf ML}_{q;r}(X) 
		\lesssim_{(\beta,\gamma,a_0,a_1,A,r,q)}  
		\begin{cases} 
		N_L^\gamma 
		+ 
		N_L^{\alpha r'\!}  
		&\text{ if } 
		(\beta+a_1) r' > \gamma + a_0,  
		\\
		N_L^\gamma 
		+ 
		N_L^{\alpha r' \!} 
		L^{r'+1}  
		&\text{ if }  
		(\beta+a_1) r' = \gamma + a_0, 
		\\
		N_L^\gamma 
		+ 
		N_L^{\alpha r'\!+\gamma+a_0-(\beta+a_1) r' \!}  
		&\text{ if }  
		(\beta+a_1) r' < \gamma + a_0, 
		\end{cases}
	\end{align*} 
	and behaves in terms of the accuracy 
	$\epsilon$ as follows, 
	\begin{align*} 
		&\cC^{\sf ML}_{q;r}(X) 
		\lesssim_{(\alpha,\beta,\gamma,a_0,a_1,A,r,q)}  
		\begin{cases} 
		\epsilon^{-\frac{\gamma}{\alpha}} 
		+ 
		\epsilon^{-r'} 
		&\text{ if } 
		r' > t, 
		\\ 
		\epsilon^{-\frac{\gamma}{\alpha}} 
		+ 
		\epsilon^{-r'} 
		|\log_A \epsilon|^{r'+1} 
		&\text{ if }  
		r' = t, 
		\\
		\epsilon^{-\frac{\gamma}{\alpha}} 
		+ 
		\epsilon^{-r' - \frac{\gamma + a_0 - (\beta+a_1) r' \!}{\alpha}} 
		&\text{ if }  
		r' < t, 
		\end{cases}
	\end{align*} 
	where $t := \frac{\gamma+a_0}{\beta+a_1}$. 
	Finally, since the preceding error and cost bounds 
	hold for all $r\in R(p,q) = [p,q]\cap (1,2]$, we
	can minimize the costs 
	by choosing 
	$r'\! \in [q'\!,p']\cap [2,\infty)$ 
	and, thus, 
	$r\in  R(p,q)$ 
	as follows: 
	\begin{align*}
		r' 
		= 
		\begin{cases} 
		\bar{q}' 
		&\text{ if } 
		t \leq \bar{q}'\!, 
		\\ 
		\bar{q}' 
		&\text{ if } 
		t \in(\bar{q}'\!,p'] 
		\;\;\text{and}\;\; 
		\alpha \geq \beta+a_1,  
		\\ 
		t
		&\text{ if } 
		t \in(\bar{q}'\!,p'] 
		\;\;\text{and}\;\; 
		\alpha < \beta+a_1, 
		\\
		\bar{q}' 
		&\text{ if } 
		t >p'\! \;\; \text{and}\;\; 
		\alpha \geq \beta + a_1, 
		\\ 
		p'  
		&\text{ if } 
		t >p'\! \;\; \text{and}\;\; 
		\alpha < \beta + a_1. 
		\end{cases} 
	\end{align*}
	This choice yields the cost bound 
	\eqref{eq:MLMC-cost-dim-dep-1} 
	which completes the proof. 
\end{proof} 

\subsection{Extension to \texorpdfstring{$k$th}{kth} moments}
\label{sec:SL_MLMC_kmoments}
The analysis presented in 
Subsections \ref{sec:SLMC} and~\ref{sec:MLMC} 
can straightforwardly be generalized 
to the estimation of $k$th injective moments. 
The technical tools are slight generalizations 
of the error estimates 
derived in \cite{KKChS2024}, 
which are explicit in the dependence on the type constant. 

The following error estimate 
plays for $k$th moments the same role as  
Proposition~\ref{prop:dim-dep-MC-1} for the first moment.

\begin{proposition}\label{prop:error_estimate_kthmoment}  
	Let $q\in [1,\infty)$, 
	$k\in\bbN$, 
	and set $\bar{q}:=\min\{q,2\}$. 
	For every $\ell\in\bbN$, 
	let $M_\ell\in\bbN$ and suppose that 
	$\xi_{\ell,1},\ldots,\xi_{\ell,M_\ell}\in L^{kq}(\Omega;E_\ell)$ are  
	independent and identically distributed 
	random variables taking values in a finite-dimensional 
	subspace $E_\ell\subseteq E$.  
	Then, 
	for every $\ell\in\bbN$, 
	for  all $r\in[1,\bar{q}]$,  
	and for any $U\in \otimes_{\varepsilon_s}^{k,s} E$,
	\begin{align*} 
		\biggl\| 
		U -\frac{1}{M_\ell} \sum_{j=1}^{M_\ell} \otimes^k \xi_{\ell,j} 
		\biggr\|_{L^q(\Omega;\otimes_{\varepsilon_s}^{k,s} E)} 
		&\leq 
		\bigl\|
		U-\bbM_{\varepsilon}^k [\xi_{\ell,1} ] 
		\bigr\|_{\varepsilon_s}\\ 
		&\quad + 
		C_{q,k}^{\sf SL} 
		\tau_r(E_{\ell}) 
		M_\ell^{-\left(1-\frac{1}{r}\right)} 
		\|\xi_{\ell,1}\|^k_{L^{kq}(\Omega;E)}, 
	\end{align*}
	where $C_{q,k}^{\sf SL}:=2(2k K_{q,1}+B_q)$, 
	$B_q, K_{q,1} \in (0,\infty)$ being the 
	Khintchine 
	and Kahane--Khintchine 
	constants 
	from Definitions~\ref{def:khintchine} 
	and~\ref{def:kahane-khintchine}, respectively. 
\end{proposition}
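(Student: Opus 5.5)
We first split off the bias by the triangle inequality in $L^q(\Omega;\otimes^{k,s}_{\varepsilon_s}E)$. Since $\bbM^k_\varepsilon[\xi_{\ell,1}]$ is deterministic, this gives the term $\|U-\bbM^k_\varepsilon[\xi_{\ell,1}]\|_{\varepsilon_s}$ plus the sampling error
\[
\Bigl\|\tfrac{1}{M_\ell}\textstyle\sum_{j=1}^{M_\ell}\bigl(\otimes^k\xi_{\ell,j}-\bbE[\otimes^k\xi_{\ell,j}]\bigr)\Bigr\|_{L^q(\Omega;\otimes^{k,s}_{\varepsilon_s}E)}.
\]
This sampling term is what we must bound by $C^{\sf SL}_{q,k}\tau_r(E_\ell)M_\ell^{-1/r'}\|\xi_{\ell,1}\|^k_{L^{kq}(\Omega;E)}$.

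The strategy follows the proof of the corresponding result in \cite{KKChS2024}, keeping the type constant explicit and using only that the samples lie in $E_\ell$, which has type $r$ with constant $\tau_r(E_\ell)$.

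\begin{enumerate}
\item \emph{Symmetrization.} Apply Lemma~\ref{lem:symmetrization} to the centered variables $\otimes^k\xi_{\ell,j}-\bbE[\otimes^k\xi_{\ell,j}]$. Using an independent Rademacher family and the triangle inequality, the sampling term is at most
\[
\tfrac{2}{M_\ell}\Bigl\|\textstyle\sum_j r_j\otimes^k\xi_{\ell,j}\Bigr\|+\tfrac{2}{M_\ell}\Bigl\|\textstyle\sum_j r_j\Bigr\|_{L^q}\,\bigl\|\bbM^k_\varepsilon[\xi_{\ell,1}]\bigr\|_{\varepsilon_s}.
\]
\item \emph{Deterministic part.} By the Khintchine inequality, $\|\sum_j r_j\|_{L^q}\le B_q M_\ell^{1/2}$. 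Moreover $\|\bbM^k_\varepsilon[\xi_{\ell,1}]\|_{\varepsilon_s}\le\|\xi_{\ell,1}\|^k_{L^k}\le\|\xi_{\ell,1}\|^k_{L^{kq}}$. Since $\tau_r\ge1$ and $M_\ell^{-1/2}\le M_\ell^{-1/r'}$ for $r\le2$, this yields the $2B_q$ contribution.
\item \emph{Random part.} Condition on $\xi$, i.e.\ fix $\omega$. By definition of $\|\cdot\|_{\varepsilon_s}$,
\[
\Bigl\|\textstyle\sum_j r_j\otimes^k x_j\Bigr\|_{\varepsilon_s}=\sup_{f\in B_{E'}}\Bigl|\textstyle\sum_j r_j f(x_j)^k\Bigr|.
\]
The key step is a contraction or comparison argument reducing this to $\|\sum_j r_j x_j\|_E$ with a factor $k\max_j\|x_j\|^{k-1}$. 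Write $f(x_j)^k=f(x_j)\,\|x_j\|^{k-1}\phi_j(f)$ with $|\phi_j|\le1$. Then use the Rademacher contraction principle for the Lipschitz maps $t\mapsto t^k$ on $[-R,R]$, with Lipschitz constant $kR^{k-1}$ (Ledoux--Talagrand comparison, \cite[Theorem~4.12]{LedouxTalagrand2011}), to get
\[
\widetilde\bbE\sup_f\Bigl|\textstyle\sum_j r_j f(x_j)^k\Bigr|\le 2k\,\max_j\|x_j\|^{k-1}\,\widetilde\bbE\Bigl\|\textstyle\sum_j r_j x_j\Bigr\|_E.
\]
This is the main obstacle. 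I would apply the comparison after normalizing each $x_j$, to avoid a crude global $R=\max_j\|x_j\|$ that destroys the scaling. If the Lipschitz constant is not available in this form, the fallback is to reproduce \cite{KKChS2024}'s argument verbatim, with $\tau_p(E)$ replaced by $\tau_r(E_\ell)$.
\item \emph{Passing from $L^1$ to $L^q$ and applying the type bound.} Use Kahane--Khintchine (constant $K_{q,1}$) to move between $L^1$ and $L^q$ of the Rademacher average. Then apply type $r$ of $E_\ell$:
\[
\Bigl\|\textstyle\sum_j r_j x_j\Bigr\|_{L^r}\le\tau_r(E_\ell)\Bigl(\textstyle\sum_j\|x_j\|^r\Bigr)^{1/r}.
\]
\item \emph{Integrating over $\Omega$.} Take the $L^q$ norm in $\omega$ and combine $\max_j\|\xi_j\|^{k-1}$ with $(\sum_j\|\xi_j\|^r)^{1/r}$ via Hölder. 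Since $r\le q$ and the samples are identically distributed, this gives order $M_\ell^{1/r}\|\xi_{\ell,1}\|^k_{L^{kq}}$. Bounding the maximum by the $\ell^r$-sum loses nothing in the final rate because of the $k$-homogeneity; the cleanest route is to bound by $\bigl(\sum_j\|\xi_j\|^{kr}\bigr)^{1/r}$ directly.
\end{enumerate}

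Dividing by $M_\ell$ produces $M_\ell^{-1/r'}$ and the $4kK_{q,1}$ term. Together with the $2B_q$ term from step 2, this gives $C^{\sf SL}_{q,k}=2(2kK_{q,1}+B_q)$.
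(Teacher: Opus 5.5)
Your overall plan is right: split off the bias, symmetrize, use Khintchine for the deterministic term, reduce from $\otimes^{k,s}_{\varepsilon_s}E$ to $E$ by a contraction argument, then apply Kahane--Khintchine and type $r$ of $E_\ell$. Steps~1--2 are correct as written. The gap is in step~3 as displayed, together with step~5.

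With the single Lipschitz constant $kR^{k-1}$, $R=\max_j\|x_j\|_E$, you are left with $\max_j\|\xi_{\ell,j}\|_E^{k-1}\bigl(\sum_j\|\xi_{\ell,j}\|_E^r\bigr)^{1/r}$, and this quantity is genuinely too large.
\begin{itemize}
\item H\"older only bounds its $L^q(\Omega)$-norm by $M_\ell^{1/r+(k-1)/(kq)}\|\xi_{\ell,1}\|^k_{L^{kq}(\Omega;E)}$. This order is actually attained, e.g.\ for $\|\xi_{\ell,1}\|_E\in\{0,1,a\}$ with $\bbP(\|\xi_{\ell,1}\|_E=1)=\tfrac12$, $\bbP(\|\xi_{\ell,1}\|_E=a)=M_\ell^{-1}$ and $a=M_\ell^{1/(kq)}$.
\item Bounding the maximum by the $\ell^r$-sum is worse still: it gives order $M_\ell^{k/r}$, which does not decay after division by $M_\ell$ once $k\geq r$.
\end{itemize}
So the claim that the maximum costs nothing ``by $k$-homogeneity'' is false. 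Along this route you would prove the rate $1-\frac1r-\frac{k-1}{kq}$, not $1-\frac1r$.

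The fix is to build the per-sample normalization you mention into the contraction principle itself. \cite[Theorem~4.12]{LedouxTalagrand2011} allows a different contraction in each coordinate. For fixed $\omega$, discard the $j$ with $x_j=0$ and set $w_j:=k\|x_j\|_E^{k-1}$ and $T:=\{(f(w_jx_j))_{j}: f\in B_{E'}\}$. Let $\varphi_j$ equal $s\mapsto(s/w_j)^k$ on $[-w_j\|x_j\|_E,w_j\|x_j\|_E]$, extended constantly outside. Each $\varphi_j$ is a contraction with $\varphi_j(0)=0$ and $\varphi_j(f(w_jx_j))=f(x_j)^k$. This yields
\[
\widetilde{\bbE}\,\Bigl\|\sum_j r_j\otimes^k x_j\Bigr\|_{\varepsilon_s}
\leq 2k\,\widetilde{\bbE}\,\Bigl\|\sum_j r_j\|x_j\|_E^{k-1}x_j\Bigr\|_E
\leq 2k\,\tau_r(E_\ell)\Bigl(\sum_j\|x_j\|_E^{kr}\Bigr)^{1/r},
\]
where the last step uses $\|x_j\|_E^{k-1}x_j\in E_\ell$. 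Before this, apply Kahane--Khintchine in the Banach space $\otimes^{k,s}_{\varepsilon_s}E$ (constant $K_{q,1}$). Afterwards, use Minkowski's inequality in $L^{q/r}(\Omega)$ rather than H\"older; this is available because $r\leq\bar q\leq q$. It gives $\bigl\|(\sum_j\|\xi_{\ell,j}\|_E^{kr})^{1/r}\bigr\|_{L^q(\Omega)}\leq M_\ell^{1/r}\|\xi_{\ell,1}\|^k_{L^{kq}(\Omega;E)}$. The random part then contributes $4kK_{q,1}\tau_r(E_\ell)M_\ell^{-1/r'}\|\xi_{\ell,1}\|^k_{L^{kq}(\Omega;E)}$, and together with your step~2 you get exactly $C^{\sf SL}_{q,k}\tau_r(E_\ell)$.

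Once repaired, your argument is a self-contained re-derivation of \cite[Theorem~3.16]{KKChS2024} in this setting. The paper's proof instead applies that theorem directly to $E_\ell$ regarded as a space of type $r$, obtaining $2(2kK_{q,r}\tau_r(E_\ell)+B_q)$. It then uses $K_{q,r}\leq K_{q,1}$ and $\tau_r(E_\ell)\geq 1$, which is what your ``fallback'' amounts to.
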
 

\begin{proof}
	By the triangle inequality we obtain that 
	\begin{align*} 
		\biggl\| 
		U -\frac{1}{M_\ell} \sum_{j=1}^{M_\ell} \otimes^k \xi_{\ell,j} 
		\biggr\|_{L^q(\Omega;\otimes_{\varepsilon_s}^{k,s} E)} 
		&\leq 
		\bigl\|
		U-\bbM_{\varepsilon}^k [\xi_{\ell,1} ] 
		\bigr\|_{\varepsilon_s}\\ 
		&\quad+ 
		\biggl\|
		\bbM^k_{\varepsilon}\left[\xi_{\ell,1}\right] 
		-\frac{1}{M_\ell} \sum_{j=1}^{M_\ell} \otimes^k \xi_{\ell,j}
		\biggr\|_{L^q(\Omega;\otimes_{\varepsilon_s}^{k,s} E)} . 
	\end{align*}
	Since $E_\ell$ is of type $r$ for any $r\in [1,\bar{q}]$, 
	the second term on the right-hand side 
	can be bounded 
	by an application of \cite[Theorem 3.16]{KKChS2024}, 
	which yields the estimate 
	\[ 
		\biggl\|
		\bbM^k_{\varepsilon}\left[\xi_{\ell,1}\right] 
		-
		\frac{1}{M_\ell} \sum_{j=1}^{M_\ell} \otimes^k \xi_{\ell,j}
		\biggr\|_{L^q(\Omega;\otimes_{\varepsilon_s}^{k,s} E)} 
		\leq 2(2kK_{q,r}\tau_r(E_\ell)+B_q) 
		M_\ell^{-\frac{1}{r'}} 
		\|\xi_{\ell,1}\|^k_{L^{kq}(\Omega;E)}.
	\] 
	The claim follows then using that 
	$K_{q,r} \leq K_{q,1}$ 
	and $\tau_r(E_\ell)\geq 1$ for any $r\in[1,\bar{q}]$, 
	see \eqref{eq:bound_Kahane} and, e.g., 
	\cite[Section 7.1.a]{AnalysisInBanachSpacesII2017}.
\end{proof}

In the case that, in addition to the assumptions made in 
Proposition~\ref{prop:error_estimate_kthmoment}, 
there exist constants 
$\alpha,C_\alpha, C_{\sf stab}\in(0,\infty)$ 
such that 
\[ 
	\sup_{\ell\in\bbN} 
	N_\ell^\alpha 
	\bigl\| 
	U - \bbM_{\varepsilon}^k [\xi_{\ell,1} ] 
	\bigr\|_{\varepsilon_s}
	\leq 
	C_\alpha , 
	\qquad 
	\sup_{\ell\in\bbN} \, 
	\|\xi_{\ell,1}\|^k_{ L^{k q}(\Omega;E)}
	\leq 
	C_{\sf stab}, 
\] 
we obtain, for every $\ell\in \bbN$ 
and for all $r\in [1,\bar{q}]$, the estimate
\begin{equation}\label{eq:bound_kmoments}
	\biggl\| 
	U - \frac{1}{M_\ell}\sum_{j=1}^{M_\ell} \otimes^k \xi_{\ell,j}
	\biggr\|_{L^q(\Omega;\otimes_{\varepsilon_s}^{k,s} E)} 
	\leq 
	C_{\alpha}N_{\ell}^{-\alpha} 
	+ 
	C_{q,k}^{\sf SL} C_{\sf stab} 
	\tau_r(E_{\ell}) M_\ell^{-\left(1-\frac{1}{r} \right)},
\end{equation}
which has the same structure as  
\eqref{eq:dim-dep-estimate_SLMC}, 
modulo the constants. 
We thus derive the following complexity result 
for single-level Monte Carlo methods 
	for $k$th moments.

\begin{theorem}\label{thm:complexity_SLMC_kth} 
	Assume that  
	$(E,\norm{\,\cdot\,}{E})$ 
	is of Rademacher type $p\in[1,2]$, 
	let $k\in \bbN$, 
	$q\in(1,\infty)\cap[p,\infty)$,
	${X\in L^k(\Omega;E)}$, 
	and set  
	$\bar{q}:=\min\{q,2\}$.
	Let $(E_\ell)_{\ell \in \bbN}$ be a family 
	of subspaces of $E$ of dimensions 
	$N_\ell:=\dim(E_\ell)<\infty$ 
	satisfying Assumption~\ref{ass:constants_subspaces}. 
	For every $\ell\in\bbN$, 
	let $( X_{\ell,j} )_{j\in\bbN}$ 
	be a sequence of independent 
	copies of 
	an $E_\ell$-valued random variable 
	$X_\ell\in L^{kq}(\Omega;E_\ell)$ such that 
	\[	
		\sup_{\ell\in\bbN} 
		N_\ell^\alpha 
		\bigl\|\bbM_{\varepsilon}^k[X]
		-\bbM_{\varepsilon}^k [X_{\ell} ] \bigr\|_{\varepsilon_s}
		\leq 
		C_\alpha , 
		\qquad 
		\sup_{\ell\in\bbN} \, 
		\|X_{\ell} \|^k_{L^{kq}(\Omega;E)}
		\leq 
		C_{\sf stab},
	\]
	for some constants 
	$\alpha, C_\alpha,C_{\sf stab}\in (0,\infty)$.
	In addition, for every $\ell \in \bbN$, 
	assume that the cost $\cC_\ell$ 
	(number of floating points operations)
	to generate one sample  of $\otimes^k X_\ell$ 
	satisfies 
	$\cC_\ell\leq C_\gamma N_\ell^\gamma$, 
    for some $\gamma, C_{\gamma}\in (0,\infty)$, 
    and that there exists $A\in (1,\infty)$ 
    such that $N_\ell\eqsim A^\ell$.
	Then, for every $\epsilon\in(0,\nicefrac{1}{2}]$, 
	there exist integers $L\in\bbN$ and
	$M_L \in\bbN$ such 
	that the $L^q$-accuracy $\epsilon$ of 
	the single-level Monte Carlo 
	estimator for 
	$\bbM_{\varepsilon}^k [ X ]$, 
	\[ 
		\mathrm{err}^{k,{\sf SL}}_{q,\varepsilon_s}(X)
		:=  
		\biggl\| 
		\bbM_{\varepsilon}^k [ X ]
		-
		\frac{1}{M_L} 
		\sum_{j=1}^{M_L}  
		\otimes^k X_{L,j}
		\biggr\|_{L^q(\Omega;\otimes_{\varepsilon_s}^{k,s} E)} 
		<\epsilon , 
		\tag{$\epsilon$}
	\] 
	can be achieved at a computational cost of order
	\[
		\cC^{k,{\sf SL}}_{q,\varepsilon_s}(X) \lesssim
		\begin{cases} 
		\epsilon^{-\frac{\gamma}{\alpha}} +\epsilon^{-\bar{q}'-\frac{\gamma+a_0-a_1\bar{q}'}{\alpha}}  
		& 
		\text{ if }   
		\alpha \geq a_1, 
		\\
		\epsilon^{-\frac{\gamma}{\alpha}} + 
		 \epsilon^{-p'-\frac{\gamma+a_0-a_1 p'}{\alpha}}   
		 & 
		\text{ if }   
		\alpha <  a_1 \text{ and  $p\in(1,2]$.}	
		\end{cases}
	\] 
\end{theorem}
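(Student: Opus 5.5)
The plan is to mirror the proof of Theorem~\ref{thm:complexity_SLMC_dim_dep}, with Proposition~\ref{prop:error_estimate_kthmoment} (in the specialized form \eqref{eq:bound_kmoments}) playing the role of \eqref{eq:dim-dep-estimate_SLMC}. I would apply Proposition~\ref{prop:error_estimate_kthmoment} with $U:=\bbM^k_\varepsilon[X]$ and $\xi_{\ell,j}:=X_{\ell,j}$. This is admissible because $X\in L^k(\Omega;E)$ guarantees that $\bbM^k_\varepsilon[X]$ exists and lies in the symmetric injective tensor product, as recalled in Subsection~\ref{sec:prel_kmoments}. The assumptions of the theorem are exactly the two suprema required for \eqref{eq:bound_kmoments}. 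Hence, for every $\ell$ and every $r\in R(p,q)=[p,\bar q]\cap(1,2]$,
\[
\mathrm{err}\le C_\alpha N_\ell^{-\alpha}+C^{\sf SL}_{q,k}C_{\sf stab}\,\tau_r(E_\ell)\,M_\ell^{-1/r'}.
\]

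Next I would invoke Assumption~\ref{ass:constants_subspaces}, which gives $\tau_r(E_\ell)\le C_\tau^{1/r'}N_\ell^{a_0/r'-a_1}\le C_\tau^{1/2}N_\ell^{a_0/r'-a_1}$, since $C_\tau\ge1$ and $r'\ge2$. The sampling term is then at most $C^{\sf SL}_{q,k}C_{\sf stab}C_\tau^{1/2}N_\ell^{-\alpha}$ provided
\[
M_\ell\ge N_\ell^{a_0+r'(\alpha-a_1)}.
\]
The choice of $r$ follows the monotonicity discussion preceding \eqref{eq:explicit_choice_Mell_SLMC}. If $\alpha\ge a_1$, I take $r=\bar q$. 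If $\alpha<a_1$ and $p>1$, I take $r=p$. In both cases $M_\ell^{\rm opt}$ is given exactly by \eqref{eq:explicit_choice_Mell_SLMC}.

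Then I fix $L$ as the smallest integer with $N_L^{-\alpha}\bigl(C_\alpha+C^{\sf SL}_{q,k}C_{\sf stab}C_\tau^{1/2}\bigr)<\epsilon$ and set $M_L:=M_L^{\rm opt}$, which yields error $<\epsilon$. By minimality of $L$ and $N_\ell\eqsim A^\ell$, we have $N_L\eqsim\epsilon^{-1/\alpha}$, because consecutive dimensions differ only by a bounded factor. The cost is $\cC_LM_L\le C_\gamma N_L^\gamma\lceil N_L^{a_0+r'(\alpha-a_1)}\rceil\lesssim N_L^\gamma+N_L^{\gamma+a_0+r'(\alpha-a_1)}$. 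Substituting $N_L\eqsim\epsilon^{-1/\alpha}$ gives $\epsilon^{-\gamma/\alpha}+\epsilon^{-r'-(\gamma+a_0-a_1r')/\alpha}$, with $r'=\bar q'$ or $r'=p'$ according to the case, which is the claim.

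The only delicate step, though a mild one, is bounding the constant $C^{\sf SL}_{q,k}$ independently of $r$ and $\ell$. This holds because it only involves $K_{q,1}$ and $B_q$. For the same reason the unified factor $C_\tau^{1/2}$ works for both choices of $r$. Everything else is a routine repetition of \eqref{eq:cost-SL-1-bound}.
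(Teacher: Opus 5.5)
Your proposal is correct and follows essentially the same route as the paper: the paper also chooses $L$ as the smallest integer with $N_L^{-\alpha}\bigl(C_\alpha + C^{\sf SL}_{q,k}C_{\sf stab}C_\tau^{1/2}\bigr)<\epsilon$, takes $M_L$ from \eqref{eq:explicit_choice_Mell_SLMC}, gets the error bound from \eqref{eq:bound_kmoments}, and bounds the cost as in \eqref{eq:cost-SL-1-bound}. You additionally spell out the steps the paper leaves implicit, namely the bound $\tau_r(E_\ell)\le C_\tau^{1/2}N_\ell^{a_0/r'-a_1}$, the admissible choices $r=\bar q$ or $r=p$, and the equivalence $N_L\eqsim\epsilon^{-1/\alpha}$.
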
 

\begin{proof}
	The proof is analogous to that of 
	Theorem~\ref{thm:complexity_SLMC_dim_dep}.
	Choosing~$L\in\bbN$ 
	as the smallest integer 
	such that
	$N_L^{-\alpha} 
	\bigl( C_\alpha +  C_{q,k}^{\sf SL} C_{\sf stab} C_\tau^{1/2} \bigr) 
	< \epsilon$ holds, 
	and $M_L$ as in \eqref{eq:explicit_choice_Mell_SLMC} 
	yields $\mathrm{err}^{k,{\sf SL}}_{q,\varepsilon_s}(X)< \epsilon$ 
	by  \eqref{eq:bound_kmoments}. 
	The cost of this estimator can then 
	be bounded as for that of the first moment 
	in \eqref{eq:cost-SL-1-bound}.   
\end{proof}
We conclude this section by deriving a complexity result 
for the multilevel Monte Carlo method for the estimation of 
$\bbM_{\varepsilon}^k [ X ]$. 
The next proposition adapts to our needs \cite[Proposition~3.23 and Theorem~3.24]{KKChS2024}.

\begin{proposition}\label{prop:error_estimate_MLMC_kth}
	Suppose that $(E_\ell)_{\ell\in\bbN}$ 
	are finite-dimensional subspaces 
	of~$E$ satisfying  
	Assumption~\ref{ass:subspaces-nested}. 
	Let $q\in [1,\infty)$, $k,L\in \bbN$  
	and set $\bar{q}:=\min\{q,2\}$. 
	For every $\ell\in\{1,\ldots,L\}$, let  
	$X_{\ell}\in L^{kq}(\Omega;E_{\ell})$, 
	$M_{\ell}\in \bbN$, 
	and $\xi_{\ell,1},\dots,\xi_{\ell,M_{\ell}}$ 
	be independent copies of the 
	$\otimes^{k,s}_{\varepsilon_s} E_{\ell}$-valued 
	random variable
	\begin{equation}\label{eq:def:xi-ell-k}
		\xi_\ell  
		:=
		\otimes^k X_{\ell} 
		- 
		\otimes^k X_{\ell-1}
		\in L^q(\Omega; \otimes^{k,s}_{\varepsilon_s} E_{\ell}),
		\qquad 
		X_0:=0\in E_{1}.
	\end{equation}
	In addition, assume that there exists a constant 
	$C_{\sf stab}\in (0,\infty)$ such that
	\begin{equation}\label{eq:stability_MLMC_kth}
		\forall \ell\in \bbN:
		\quad
		 \| X_\ell \|_{L^{kq}(\Omega;E)}\leq C_{{\sf stab}}.
	\end{equation} 
	
	Then, there exists a constant 
	$C_\star=C_\star(k,q,C_{\sf stab})\in (0,\infty)$
    such that, 
    for every $U\in \otimes^{k,s}_{\varepsilon_s} E$ 
    and all $r\in [1,\bar{q}]$,
	\begin{align*}
		\biggl\| 
		U-\sum_{\ell=1}^L \frac{1}{M_{\ell}}\sum_{j=1}^{M_\ell} \xi_{\ell,j} 
		\biggr\|_{L^q(\Omega;\otimes^{k,s}_{\varepsilon_s}E)}		
		&\leq 
		\bigl\| 
		U-\bbM^k_{\varepsilon} [X_L ] 
		\bigr\|_{\varepsilon_s}
		\\
		&\quad+
		C_\star 
		\sum_{\ell=1}^L 
		\tau_r(E_\ell)M_\ell^{-\left(1-\frac{1}{r}\right)}
		\|X_\ell-X_{\ell-1}\|_{L^{kq}(\Omega;E)}. 
	\end{align*}
\end{proposition}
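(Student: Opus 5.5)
The plan is to mirror the proof of Proposition~\ref{prop:dim-dep-MLMC-1}, with the first-moment single-level estimate replaced by a $k$th-moment symmetrization argument applied to the differences $\otimes^k X_\ell-\otimes^k X_{\ell-1}$.

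\emph{Telescoping.} By linearity of the Bochner integral, $\sum_{\ell=1}^L\bbE[\xi_\ell]=\bbM^k_\varepsilon[X_L]$, since $X_0=0$. The triangle inequality in $L^q(\Omega;\otimes^{k,s}_{\varepsilon_s}E)$ then bounds the left-hand side by
\[
\bigl\|U-\bbM^k_\varepsilon[X_L]\bigr\|_{\varepsilon_s}
+\sum_{\ell=1}^L\biggl\|\bbE[\xi_\ell]-\frac{1}{M_\ell}\sum_{j=1}^{M_\ell}\xi_{\ell,j}\biggr\|_{L^q(\Omega;\otimes^{k,s}_{\varepsilon_s}E)}.
\]
It therefore suffices to show that each level-$\ell$ sampling term is at most $C_\star\tau_r(E_\ell)M_\ell^{-1/r'}\|X_\ell-X_{\ell-1}\|_{L^{kq}(\Omega;E)}$.

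\emph{Level-wise estimate.} Fix $\ell$. By nestedness (Assumption~\ref{ass:subspaces-nested}), $X_\ell$ and $X_{\ell-1}$ both take values in $E_\ell$, and $E_\ell$ has type $r$ with constant $\tau_r(E_\ell)$. I would repeat the argument of \cite[Proposition~3.23]{KKChS2024} while keeping the type constant explicit, in the same way that Proposition~\ref{prop:error_estimate_kthmoment} adapts \cite[Theorem~3.16]{KKChS2024}.

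1. Apply the symmetrization Lemma~\ref{lem:symmetrization} to the centered variables $\xi_{\ell,j}-\bbE[\xi_{\ell,j}]$. This reduces the term to the Rademacher average $\bigl\|\sum_j r_j\xi_{\ell,j}\bigr\|$ plus a term involving the mean. In \cite{KKChS2024} the mean term is handled with the Khintchine constant $B_q$, which explains the $B_q$ contribution in $C^{\sf SL}_{q,k}$.
2. Control the symmetric injective norm by testing with $f\in B_{E'}$. Use the algebraic identity
\[
f(x)^k-f(y)^k=f(x-y)\sum_{i=0}^{k-1}f(x)^i f(y)^{k-1-i},
\]
which writes each $\xi_{\ell,j}$ as a sum of $k$ terms. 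Each term is $f(X_\ell-X_{\ell-1})$ times a product of $k-1$ factors $f(X_\ell)$ or $f(X_{\ell-1})$.
3. Conditionally on the samples, view the Rademacher sum as a sum in $E_\ell$ with random weights. Apply the Kahane--Khintchine inequality (constant $K_{q,r}\le K_{q,1}$) followed by the type-$r$ inequality in $E_\ell$. This produces the factor $\tau_r(E_\ell)M_\ell^{1/r}$.
4. Finish with H\"older's inequality using the exponents $kq=q\cdot k$. This gives
\[
\|X_\ell-X_{\ell-1}\|_{L^{kq}(\Omega;E)}\bigl(\|X_\ell\|_{L^{kq}(\Omega;E)}+\|X_{\ell-1}\|_{L^{kq}(\Omega;E)}\bigr)^{k-1},
\]
and the stability bound \eqref{eq:stability_MLMC_kth} absorbs the last factor into $C_\star$. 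Dividing by $M_\ell$ gives the rate $M_\ell^{-(1-1/r)}$.

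\emph{Main obstacle.} The hard step is step~3: the supremum over $f\in B_{E'}$ sits inside the expectation and couples nonlinearly to the products of $f$. The type inequality does not apply directly to $\otimes^k$ of Rademacher sums. I would follow the device of \cite[Proposition~3.23]{KKChS2024}: use the contraction principle, or conditioning on the samples, to bound the product factors $|f(X)|^{k-1}\le\|X\|_E^{k-1}$ as scalar multipliers. What remains is the $E_\ell$-valued Rademacher sum $\sum_j r_j c_j(X_{\ell,j}-X_{\ell-1,j})$ with scalar weights $c_j$, to which the type-$r$ constant of $E_\ell$ applies. Since $\tau_r(E_\ell)\ge 1$, all remaining constants, including the symmetrization and $B_q$ terms, can be dominated by $\tau_r(E_\ell)$ times a constant depending only on $(k,q,C_{\sf stab})$. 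This yields $C_\star$.
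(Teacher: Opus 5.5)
Your top-level argument is the paper's. It telescopes $\sum_{\ell}\bbE[\xi_\ell]=\bbM^k_\varepsilon[X_L]$, applies the triangle inequality, and bounds each level by \cite[Proposition~3.23]{KKChS2024}, using that $X_\ell,X_{\ell-1}$ take values in $E_\ell$, which has type $r$ with constant $\tau_r(E_\ell)$. It then closes with the stability bound. Used as a black box, as in the paper, this works, but you skip one point. The mean term from symmetrization is $2B_qM_\ell^{-1/2}\|X_\ell-X_{\ell-1}\|_{L^k(\Omega;E)}\sum_{i=0}^{k-1}\|X_\ell\|^i_{L^k(\Omega;E)}\|X_{\ell-1}\|^{k-1-i}_{L^k(\Omega;E)}$. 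It is absorbed into the main term not just by enlarging a constant: you need $M_\ell^{-1/2}\le M_\ell^{-(1-1/r)}$, which holds because $r\le 2$. Only then do $\tau_r(E_\ell)\ge1$ and $C^{{\sf diff}}_{q,k}\ge B_q$ finish the job.

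Your reconstruction of the core step, however, fails as described. After writing $f(x)^k-f(y)^k=f(x-y)\sum_i f(x)^if(y)^{k-1-i}$, the multipliers depend on the functional $f$ over which the supremum is taken inside the $L^q(\widetilde{\Omega})$-norm. Conditioning on the samples does not freeze them, and Kahane's contraction principle requires fixed scalars.

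The claimed reduction to a single sum $\sum_j r_jc_j(X_{\ell,j}-X_{\ell-1,j})$, with $f$-independent weights $|c_j|\lesssim(\|X_{\ell,j}\|_E+\|X_{\ell-1,j}\|_E)^{k-1}$, is false. Take $E_\ell=\ell^1_{M+1}$, $k=2$, $X_{\ell-1,j}=e_j$ and $X_{\ell,j}=e_j+\delta e_0$ for $j=1,\ldots,M$. Choosing $f=(1,r_1,\ldots,r_M)\in B_{E'}$ pointwise in $\widetilde{\omega}$ gives $\sum_j r_j\bigl(f(X_{\ell,j})^2-f(X_{\ell-1,j})^2\bigr)=2\delta M+\delta^2\sum_j r_j$, which is of order $\delta M$. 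By contrast, $\|\sum_j r_jc_j\,\delta e_0\|_{L^q(\widetilde{\Omega};\ell^1)}\lesssim\delta\sqrt{M}$.

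What your sketch misses is a second Rademacher sum over the coarse samples $X_{\ell-1,j}$, weighted by powers of $\|X_{\ell,j}-X_{\ell-1,j}\|_E$; in the example it is $\|\sum_j r_j\delta e_j\|_{\ell^1}=\delta M$. Producing it requires a comparison principle for Rademacher or Gaussian processes indexed by $f\in B_{E'}$ that handles functions of two coordinates. An example is $(s,t)\mapsto s^it^{k-i}$ after binomial expansion, with $s=f(X_{\ell,j}-X_{\ell-1,j})$ and $t=f(X_{\ell-1,j})$. The one-dimensional Ledoux--Talagrand contraction principle suffices for $f(x_j)^k$ in the single-level case but does not cover this. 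Applying the type-$r$ inequality in $E_\ell$ to both sums, and then H\"older's inequality, gives a bound of the same form as the one the paper imports from \cite[Proposition~3.23]{KKChS2024}, namely $\sum_{i=1}^k\binom{k}{i}\|X_\ell-X_{\ell-1}\|^i_{L^{kq}(\Omega;E)}\|X_{\ell-1}\|^{k-i}_{L^{kq}(\Omega;E)}$.
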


\begin{proof}
	We first observe that 
	$\bbE\Bigl[
	\sum_{\ell=1}^L \frac{1}{M_{\ell}}
	\sum_{j=1}^{M_\ell} \xi_{\ell,j} 
	\Bigr] 
	= 
	\bbM^k_{\varepsilon} [X_L]$ 
	and use the triangle inequality on $L^q(\Omega;\otimes^{k,s}_{\varepsilon_s}E)$ 
	to obtain 
    \begin{equation}\label{eq:first_step_MLMC_kth_estimate}
    		\biggl\| U
    		-
    		\sum_{\ell=1}^L \frac{1}{M_{\ell}}\sum_{j=1}^{M_\ell} \xi_{\ell,j}
    		\biggr\|_{L^q(\Omega;\otimes^{k,s}	_{\varepsilon_s}E)}
    		\leq 
    		\bigl\| U-\bbM^k_{\varepsilon}[X_L]
    		\bigr\|_{\varepsilon_s}\\
    		+
    		\sum_{\ell=1}^L 
    		\mathrm{err}^{{\sf SL}}_{q,\varepsilon_s}(\xi_\ell), 
    \end{equation}
    where we define 
    \[
    	\mathrm{err}^{{\sf SL}}_{q,\varepsilon_s}(\xi_\ell) 
    	:= 
    	\biggl\|
    	\bbE [\xi_{\ell} ] - 
    	\frac{1}{M_{\ell}}\sum_{j=1}^{M_\ell} \xi_{\ell,j}
    	\biggr\|_{L^q(\Omega;  \otimes^{k,s}_{\varepsilon_s} E)}.
    \]
	Now, for every $\ell\in\left\{1,\dots,L\right\}$, 
	we use that $E_{\ell}$ 
	has any Rademacher type $r\in[1,2]$, 
	so that \cite[Proposition 3.23]{KKChS2024}
    leads for any $\ell\in\left\{1,\dots,L\right\}$ 
    and $r\in [1,\bar{q}]$,
	\begin{align*}  
			\mathrm{err}^{{\sf SL}}_{q,\varepsilon_s}(\xi_\ell) 
			&\leq 
			2 C^{{\sf diff}}_{q,k} 
			\tau_r(E_\ell) M_{\ell}^{-\left(1-\frac{1}{r}\right)} 
			\sum_{i=1}^k 
			\left[ \binom{k}{i}  
			\|X_{\ell}-X_{\ell-1}\|^i_{L^{kq}(\Omega;E)} 
			\|X_{\ell-1}\|^{k-i}_{L^{kq}(\Omega;E)}
			\right]\\
			&\;\; + 
			2 B_q 
			M_{\ell}^{-\frac{1}{2}} 
			\|X_{\ell}-X_{\ell-1}\|_{L^{k}(\Omega;E)}
			\sum_{i=0}^{k-1}
			\left[
			\|X_{\ell}\|^i_{L^k(\Omega;E)}
			\|X_{\ell-1}\|^{k-i-1}_{L^k(\Omega;E)}
			\right]
			\\
			&\leq 
			2 C^{{\sf diff}}_{q,k}
			\tau_r(E_\ell) M_\ell^{-\left(1-\frac{1}{r} \right)} 
 			\|X_\ell-X_{\ell-1}\|_{L^{kq}(\Omega;E)} 
 			\\
			&\;\;\cdot
			\left[\sum_{i=0}^{k-1} 
			\left( \binom{k}{i+1} 
			\|X_{\ell}-X_{\ell-1}\|^i_{L^{kq}(\Omega;E)} + 
			\|X_{\ell}\|^i_{L^k(\Omega;E)} 
			\right) 
			\|X_{\ell-1}\|_{L^{kq}(\Omega;E)}^{k-i-1}  
			\right] \!,  
	\end{align*} 
	where $C^{{\sf diff}}_{q,k}:=16 k \sqrt{\pi}K_{q,1}K_{q,2}$ 
	satisfies the relation $C^{{\sf diff}}_{q,k} > K_{q,2} 
	\geq B_q$, 
	and in the last step we used that 
	$r\in[1,\bar{q}]\subseteq [1,2]$ 
	to combine the two sums.
	Finally using the stability 
	assumption \eqref{eq:stability_MLMC_kth}, 
	we deduce the existence of a positive constant 
	$C_\star = C_\star(k,q,C_{\sf stab})\in (0,\infty)$ 
	such that for every $\ell\in\left\{1,\dots,L\right\}$ 
	and all $r\in [1,\bar{q}]$,
	\[
		\mathrm{err}^{{\sf SL}}_{q,\varepsilon_s}(\xi_\ell) 
		\leq 
		C_\star \tau_r(E_\ell)M_\ell^{-\left(1-\frac{1}{r} \right)}
		\|X_\ell-X_{\ell-1}\|_{L^{kq}(\Omega;E)}.
	\]
    Inserting this estimate into \eqref{eq:first_step_MLMC_kth_estimate} 
    concludes the proof.
\end{proof} 

\begin{theorem}
	Suppose that 
	$(E,\norm{\,\cdot\,}{E})$ 
	is of Rademacher type $p\in[1,2]$ and 
	let $(E_\ell)_{\ell\in \bbN}$ be a 
	family of subspaces of $E$ 
	of 
	dimension 
	$N_\ell := \dim(E_\ell) <\infty$ 
	satisfying 
	Assumptions~\ref{ass:constants_subspaces} 
	and~\ref{ass:subspaces-nested}. 
	Let $k\in \bbN$, 
	$q\in(1,\infty)\cap[p,\infty)$,
	${X\in L^k(\Omega;E)}$, 
	and set  
	$\bar{q}:=\min\{q,2\}$. 
	For all ${\ell\in\bbN}$,
	let 
	$X_\ell \in L^{kq}(\Omega;E_\ell)$ 
	be an 
	$E_\ell$-valued random variable 
	such that \eqref{eq:stability_MLMC_kth} holds, 
	and define   
	$\xi_\ell  
	\in L^q(\Omega;\otimes^{k,s}_{\varepsilon_s}E_{\ell})$ 
	as in 
	\eqref{eq:def:xi-ell-k}. 
	For every $\ell\in\bbN$, 
	let $\cC_\ell$ denote the cost 
	(number of floating point operations)
	to generate one sample of 
	the random variable~$\xi_\ell$ 
	in~\eqref{eq:def:xi-ell-k}, 
	and suppose that 
	there exist constants 
	${\alpha,\beta,\gamma,C_\alpha,C_\beta,C_\gamma\in(0,\infty)}$ 
	and 
	$A\in(1,\infty)$ such that  
	$N_\ell \eqsim A^\ell$ 
	for every $\ell\in\bbN$, and 
		\begin{align} 
		\quad 
		\forall \ell\in\bbN : 
		&&
		\bigl\| \bbM^k_{\varepsilon}[X] - \bbM^k_{\varepsilon}[X_\ell] \bigr\|_{\varepsilon_s}   
		&\leq 
		C_\alpha N_\ell^{-\alpha} \!,
		\qquad
		\tag{$\alpha$} 
		\label{eq:ass:alpha_MLMC_kth} 
		\\
		\quad
		\forall \ell\in\bbN : 
		&&
		\| X_\ell - X_{\ell-1} \|_{L^{kq}(\Omega;E)} 
		&\leq 
		C_\beta N_\ell^{-\beta} \!, 
		\qquad 
		\tag{$\beta$} 
		\label{eq:ass:beta_MLMC_kth} 
		\\
		\quad
		\forall \ell\in\bbN : 
		&&
		\cC_\ell 
		&\leq 
		C_\gamma N_\ell^\gamma . 
		\qquad 
		\tag{$\gamma$} 
		\label{eq:ass:gamma_MLMC_kth} 
	\end{align} 
	Furthermore, for each $\ell\in\bbN$, 
	let $( \xi_{\ell,j} )_{j\in\bbN} \subset 
	L^q(\Omega;\otimes^{k,s}_{\varepsilon_s}E_{\ell})$ 
	be a sequence of independent 
	copies of 
	the $\otimes^{k,s}_{\varepsilon_s}E_{\ell}$-valued 
	random variable $\xi_\ell$ in \eqref{eq:def:xi-ell-k}. 

	Then, for every $\epsilon\in (0,\nicefrac{1}{2}]$, 
	there exist integers $L\in\bbN$ and $M_1,\dots,M_L\in \bbN$ such that 
	the $L^q$-accuracy of the multilevel 
	Monte Carlo estimator for $\bbM_{\varepsilon}^k [X]$ satisfies
	\[ 
		\mathrm{err}^{k,{\sf ML}}_{q,\varepsilon_s}(X)
		:=  
		\biggl\| 
		\bbM_{\varepsilon}^k [X]
		-\sum_{\ell=1}^L
		\frac{1}{M_\ell} 
		\sum_{j=1}^{M_\ell}  
		\xi_{\ell,j}
		\biggr\|_{L^q(\Omega;\otimes_{\varepsilon_s}^{k,s} E)} 
		<\epsilon ,  
	\] 
	and, 
	letting $t := \frac{\gamma+a_0}{\beta+a_1}$, 
	can be achieved at a computational cost of order
	\[ 
		\cC^{k,{\sf ML}}_{q,\varepsilon_s}(X) 
		\lesssim
		\begin{cases} 
			\epsilon^{-\frac{\gamma}{\alpha}} 
			+ 
			\epsilon^{-\bar{q}'} 
			&\text{ if } 
			t<\bar{q}'\!, 
			\\[2pt]
			\epsilon^{-\frac{\gamma}{\alpha}} 
			+ 
			\epsilon^{-\bar{q}'} 
			|\log_A \epsilon|^{\bar{q}'+1} 
			&\text{ if }  
			t = \bar{q}'\!,
			\\[2pt]
			\epsilon^{-\frac{\gamma}{\alpha}} 
			+ 
			\epsilon^{-\bar{q}' - \frac{\gamma + a_0 - (\beta+a_1) \bar{q}' \!}{\alpha}} 
			&\text{ if } 
			t \in(\bar{q}'\!,p'] 
			\;\; \text{and} \;\; 
			\alpha \geq \beta+a_1, 
			\\[2pt]
			\epsilon^{-\frac{\gamma}{\alpha}} 
			+ 
			\epsilon^{-t} 
			|\log_A \epsilon|^{t+1} 
			&\text{ if } 
			t \in(\bar{q}'\!,p'] 
			\;\; \text{and} \;\; 
			\alpha < \beta+a_1, 
			\\[2pt]
			\epsilon^{-\frac{\gamma}{\alpha}} 
			+ 
			\epsilon^{-\bar{q}' - \frac{\gamma + a_0 - (\beta+a_1) \bar{q}' \!}{\alpha}} 
			&\text{ if }  
			t > p'\! \;\;\text{and}\;\; 
			\alpha \geq \beta+a_1, 
			\\[2pt]
			\epsilon^{-\frac{\gamma}{\alpha}} 
			+ 
			\epsilon^{-p' - \frac{\gamma + a_0 - (\beta+a_1) p' \!}{\alpha}} 
			&\text{ if }  
			t > p'\! \;\;\text{and}\;\; 
			\alpha < \beta+a_1. 
		\end{cases}
	\] 
\end{theorem}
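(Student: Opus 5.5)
The proof is a reduction to the argument for Theorem~\ref{thm:dim-dep-alpha-beta-gamma-1}. The only new input is that Proposition~\ref{prop:error_estimate_MLMC_kth} gives an error bound with exactly the same structure as Proposition~\ref{prop:dim-dep-MLMC-1}, up to constants.

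First, fix $r\in R(p,q)=[p,q]\cap(1,2]$. Apply Proposition~\ref{prop:error_estimate_MLMC_kth} with $U:=\bbM^k_\varepsilon[X]$; the stability assumption \eqref{eq:stability_MLMC_kth} is available by hypothesis. Then insert \eqref{eq:ass:alpha_MLMC_kth}, \eqref{eq:ass:beta_MLMC_kth}, and the bound $[\tau_r(E_\ell)]^{r'}\le C_\tau N_\ell^{a_0-a_1r'}$ from Assumption~\ref{ass:constants_subspaces}. This gives
\[
\mathrm{err}^{k,{\sf ML}}_{q,\varepsilon_s}(X)\le C_\alpha N_L^{-\alpha}+C_\star C_\beta C_\tau^{\frac1{r'}}\sum_{\ell=1}^L M_\ell^{-\frac1{r'}}N_\ell^{\frac{a_0}{r'}}N_\ell^{-(\beta+a_1)},
\]
which is \eqref{eq:bound_proof_complexity_MLMC} with $2K_{q,1}$ replaced by $C_\star$. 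Since $C_\tau\ge1$ and $r'\ge2$, we have $C_\tau^{1/r'}\le C_\tau^{1/2}$, so the constant is uniform in $r$. The constant $C_\star$ depends only on $k$, $q$ and $C_{\sf stab}$, and in particular not on $\ell$ or $r$.

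Second, choose $L$ as the smallest integer with $N_L^{-\alpha}<(C_\alpha+C_\star C_\beta C_\tau^{1/2})^{-1}\epsilon$. Take $M_\ell$ exactly as in the proof of Theorem~\ref{thm:dim-dep-alpha-beta-gamma-1}:
\[
M_\ell=\Bigl\lceil N_L^{\alpha r'}S_{L;r}^{r'}N_\ell^{-\frac{(\beta+a_1+\gamma)r'-a_0}{r'+1}}\Bigr\rceil .
\]
The same computation then yields error $<\epsilon$. For the cost, assumption \eqref{eq:ass:gamma_MLMC_kth} bounds the cost of one sample of $\xi_\ell$, which is all the argument uses. So $\sum_\ell \cC_\ell M_\ell\lesssim \sum_\ell N_\ell^\gamma+N_L^{\alpha r'}S_{L;r}^{r'+1}$. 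Using the asymptotics \eqref{eq:theta-L}, together with $N_L\eqsim\epsilon^{-1/\alpha}$ and $L\eqsim|\log_A\epsilon|$, gives the three-case bound in terms of $r'$ versus $t$.

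Third, minimize over $r'\in[\bar q',p']\cap[2,\infty)$ using the same case selection as before. Take $r'=\bar q'$ when $t\le\bar q'$, or when $\alpha\ge\beta+a_1$ (the exponent is then nonincreasing in $r'$). Take $r'=t$ when $t\in(\bar q',p']$ and $\alpha<\beta+a_1$. Take $r'=p'$ when $t>p'$ and $\alpha<\beta+a_1$. This reproduces the six cases of the claimed bound.

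The main point to check is the first step: that $C_\star$ is independent of $r$ and $\ell$, so that $M_\ell$ may depend on $r$ while the error constant does not. That independence holds because Proposition~\ref{prop:error_estimate_MLMC_kth} states the bound for all $r\in[1,\bar q]$ with a single constant $C_\star$. Everything after that is verbatim from the first-moment proof.
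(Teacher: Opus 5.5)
Your proposal is correct and follows the paper's proof, with more detail written out. In both, Proposition~\ref{prop:error_estimate_MLMC_kth} with $U=\bbM^k_\varepsilon[X]$ gives a bound of the form \eqref{eq:bound_proof_complexity_MLMC}, with $2K_{q,1}$ replaced by the $r$- and $\ell$-independent constant $C_\star$, and the choice of $L$, $M_\ell$ and the optimization over $r'$ from Theorem~\ref{thm:dim-dep-alpha-beta-gamma-1} then carry over unchanged. One wording slip in your third step: for $\alpha\ge\beta+a_1$ the exponent $r'+\frac{\gamma+a_0-(\beta+a_1)r'}{\alpha}$ is nondecreasing in $r'$ (nonincreasing in $r$), not nonincreasing, and that is exactly why the smallest admissible value $r'=\bar q'$ is the right choice.
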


\begin{proof}
	By Proposition~\ref{prop:error_estimate_MLMC_kth},  
	together with 
	\eqref{eq:ass:alpha_MLMC_kth}, \eqref{eq:ass:beta_MLMC_kth} and 
	Assumption~\ref{ass:constants_subspaces}, we obtain 
	\[
		\mathrm{err}^{k,{\sf ML}}_{q,\varepsilon_s}(X)
		\leq 
		C_{\alpha} N_L^{-\alpha} 
		+ 
		C_\star C_{\beta} C_{\tau}^{\frac{1}{r'}}
		\sum_{\ell=1}^L 
		M_{\ell}^{-\left( 1-\frac{1}{r} \right)} 
		N_{\ell}^{-(\beta+a_1)+\frac{a_0}{r'}},
	\]
	which has the same structure 
	as the bound for the error 
	$\mathrm{err}^{{\sf ML}}_{q}(X)$ 
	of the multilevel estimator  
	for the first moment 
	in \eqref{eq:bound_proof_complexity_MLMC}, 
	but with different constants.
	To achieve the $L^q$-accuracy~$\epsilon$,  
	the finest level $L$ and 
	the number of samples 
	$M_1,\ldots, M_L$ 
	can then be chosen as detailed in the proof 
	of 
	Theorem~\ref{thm:dim-dep-alpha-beta-gamma-1} 
	(by suitably adapting the constants). 
	By invoking the 
	assumption \eqref{eq:ass:gamma_MLMC_kth} 
	the asymptotic computational cost 
	of this estimator expressed in terms of 
	the parameters $(\alpha,\beta,\gamma)$ 
	is thus the same.
\end{proof}

\section{Optimized Monte Carlo error analysis in \texorpdfstring{$L^p$}{Lp} spaces based on Minkowski's integral inequality}\label{sec:Minkowski}

In this section, we restrict our study to $L^p$ spaces, 
and we present an analysis that, 
for a class of $L^p(S)$-valued random variables, 
under an additional integrability assumption 
on the random variables 
and an approximation property 
of the measure space $(S,\cS,\mu)$, 
permits to obtain sharper Monte Carlo error estimates 
than those provided by 
the analysis based on Rademacher types. 

We start with recalling the following classical result, 
see \cite[Appendix~A.1]{Stein1970} 
or \cite[Propositions~1.2.22 and~1.2.25]{AnalysisInBanachSpacesI2016}. 

\begin{theorem}[Minkowski's integral inequality]
\label{thm:minkowski-pq} 
	Assume that $(S,\cS,\mu)$ and $
	(T,\cT,\nu)$ are two 
	$\sigma$-finite measure spaces 
	and let $f\from (S\times T, 
	\cS\otimes\cT)\to(\bbR,\cB(\bbR))$ 
	be measurable. 
	Then we have, for every $1\leq p \leq q <\infty$, 
	\begin{equation}\label{eq:minkowski-pq} 
		\biggl[ \int_T \biggl( \int_S 
		|f(s,t)|^p \, \rd\mu(s) \biggr)^{\frac{q}{p}} 
		\rd \nu(t) \biggr]^{\frac{1}{q}}
		\leq
		\biggl[ \int_S \biggl( \int_T |f(s,t)|^q \, \rd\nu(t) \biggr)^{\frac{p}{q}} \rd\mu(s)
		\biggr]^{\frac{1}{p}}. 
	\end{equation} 

	In particular, the Bochner spaces 
	$L^q(T;L^p(S)) 
	= 
	L^q(T,\cT,\nu;L^p(S,\cS,\mu))$ 
	and 
	$L^p(S;L^q(T)) 
	= 
	L^p(S,\cS,\mu;L^q(T,\cT, \nu))$ 
	satisfy the following relation:  
	\[
		\forall \; 1 \leq  p\leq q < \infty : 
		\qquad 
		\norm{f}{L^q(T;L^p(S))}
		\leq 
		\norm{f}{L^p(S;L^q(T))}, 
	\]
	holding for all  
	$f\from S\times T \to \bbR$ 
	measurable, 
	and $L^p(S;L^q(T))$ 
	may be identified with 
	a subspace of $L^q(T;L^p(S))$ 
	which is continuously embedded 
	in $L^q(T;L^p(S))$. 
\end{theorem}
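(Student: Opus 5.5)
The plan is the classical reduction to the triangle inequality in a suitable $L^r$ space, followed by Tonelli's theorem for the second statement. Set $r:=q/p\in[1,\infty)$. The left-hand side of \eqref{eq:minkowski-pq}, raised to the power $p$, is
\[
\biggl[ \int_T \biggl( \int_S g(s,t)\, \rd\mu(s)\biggr)^{r} \rd\nu(t)\biggr]^{\frac1r},
\qquad g:=|f|^p\ge 0 .
\]
This is the $L^r(T)$-norm of the function $t\mapsto\int_S g(s,t)\,\rd\mu(s)$. The right-hand side raised to the power $p$ is $\int_S \|g(s,\cdot)\|_{L^r(T)}\,\rd\mu(s)$. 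Hence \eqref{eq:minkowski-pq} reduces to the ``integral triangle inequality''
\[
\Bigl\| \int_S g(s,\cdot)\,\rd\mu(s)\Bigr\|_{L^r(T)} \le \int_S \|g(s,\cdot)\|_{L^r(T)}\,\rd\mu(s)
\]
for nonnegative measurable $g$ and $r\in[1,\infty)$.

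To prove the reduced inequality, I treat $r=1$ separately: there it is equality by Tonelli, which applies because both spaces are $\sigma$-finite and $g\ge0$ is measurable. For $r>1$ I argue by duality. Write $G(t):=\int_S g(s,t)\,\rd\mu(s)$, which is measurable in $t$ by Tonelli. The identity $\|G\|_{L^r(T)}=\sup\{\int_T G h\,\rd\nu: h\ge0,\ \|h\|_{L^{r'}(T)}\le1\}$ holds for nonnegative $G$, even when $\|G\|_{L^r(T)}=\infty$, provided $\nu$ is $\sigma$-finite. Fix such an $h$. Tonelli lets me swap the integrals, and Hölder's inequality in $t$ for each fixed $s$ then gives
\[
\int_T G h\,\rd\nu=\int_S\int_T g(s,t)h(t)\,\rd\nu(t)\,\rd\mu(s)\le \int_S \|g(s,\cdot)\|_{L^r(T)}\,\rd\mu(s).
\]
Taking the supremum over $h$ finishes the argument.

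The step needing the most care is the duality formula when $G$ is possibly not in $L^r$. I handle it by truncation. Choose $T_n\uparrow T$ with $\nu(T_n)<\infty$ and set $G_n:=\min\{G,n\}\mathbf 1_{T_n}$. Testing with $h_n:=G_n^{r-1}/\|G_n\|_{L^r(T)}^{r-1}$ gives $\|G_n\|_{L^r(T)}\le \int_T G h_n\,\rd\nu$, which is bounded by the right-hand side above. Monotone convergence then yields the bound for $G$ itself. I also check that $s\mapsto\|g(s,\cdot)\|_{L^r(T)}$ is measurable; this follows from Tonelli applied to $g^r$.

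For the second part, I apply \eqref{eq:minkowski-pq} with $T=\Omega$-type space $(T,\cT,\nu)$. This gives exactly $\|f\|_{L^q(T;L^p(S))}\le\|f\|_{L^p(S;L^q(T))}$ whenever both sides are read as iterated integrals of $|f|$. To identify the Bochner spaces, I must show that a jointly measurable $f$ with finite right-hand side defines a strongly measurable map $t\mapsto f(\cdot,t)\in L^p(S)$. Separability issues are avoided by approximating $f$ by simple functions of product form $\sum \mathbf 1_{A_i\times B_i}c_i$ (with finite-measure rectangles), which are dense in $L^p(S;L^q(T))$, and the mapping is consistent on them. The inequality then extends by density to a continuous injective embedding $L^p(S;L^q(T))\hookrightarrow L^q(T;L^p(S))$. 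Injectivity holds because both spaces are identified with a.e.-classes of functions on $S\times T$.
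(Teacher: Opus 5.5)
Your proof of \eqref{eq:minkowski-pq} is correct and self-contained. The paper gives no argument of its own; it only cites Stein (Appendix~A.1) and Hyt\"onen et al.\ (Propositions~1.2.22 and~1.2.25). The substitution $g=|f|^p$, $r=q/p$ is the standard reduction to the integral triangle inequality in $L^r(T)$. Your duality-plus-truncation proof of that inequality has a real advantage over deriving it from the Bochner estimate $\|\int_S G\,\rd\mu\|\le\int_S\|G\|\,\rd\mu$: it needs neither strong measurability of $s\mapsto g(s,\cdot)$ nor finiteness of either side. It therefore covers ``every measurable $f$'' exactly as stated. You also correctly note that $\sigma$-finiteness of $\nu$ is what makes the duality formula valid when $\|G\|_{L^r(T)}=\infty$.

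The embedding paragraph is only sketched, and it contains the actual measure-theoretic content of the second part.

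\begin{itemize}
\item \emph{Injectivity.} A bounded extension of an injective map from a dense subspace need not be injective. So injectivity rests entirely on your last sentence, i.e.\ on identifying both Bochner spaces with a.e.-classes of jointly measurable functions on $S\times T$. That identification is precisely Hyt\"onen et al., Proposition~1.2.25, which the paper also just cites.
\item \emph{Density.} Appealing to density of rectangle-simple functions \emph{in the Bochner space} $L^p(S;L^q(T))$ presupposes, rather than proves, that a jointly measurable $f$ with finite mixed norm has $s\mapsto f(s,\cdot)$ strongly measurable.
\end{itemize}

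To make this part self-contained:

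\begin{itemize}
\item Prove density of rectangle-simple functions directly in the space of jointly measurable functions with finite mixed norm: truncate, then approximate product-measurable sets of finite measure by finite unions of rectangles.
\item Observe that convergence in either mixed norm implies $L^1$-convergence on every finite-measure rectangle $A\times B$. By H\"older, $\int_{A\times B}|h|\,\rd(\mu\otimes\nu)\le\mu(A)^{1/p'}\nu(B)^{1/q'}\|h\|$ for both mixed norms.
\item Subsequential a.e.\ limits then give a single jointly measurable representative for an element and for its image, and injectivity follows from Tonelli.
\end{itemize}
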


\begin{remark}
\label{rem:fct-on-product-space} 
	For Minkowki's integral inequality 
	(Theorem~\ref{thm:minkowski-pq}), 
	it is assumed that 
	$f$ is measurable on the 
	product measurable space $(S\times T, 
	\cS\otimes\cT)$. 
	In what follows, 
	we will consider  
	a probability space 
	$(\Omega,\cA,\bbP)$, 
	a $\sigma$-finite measure space 
	$(S,\cS,\mu)$, and 
	$L^p(S)$-valued random variables 
	in $L^q(\Omega;L^p(S))$ for some 
	$p,q\in[1,\infty)$. 
	We note that 
	Bochner integrability 
	of $\eta \from \Omega \to L^p(S)$ 
	implies that there exists  
	a measurable 
	function on the product space 
	$\widetilde{\eta} \from 
	(S\times\Omega, \cS\otimes\cA)
	\to(\bbR,\cB(\bbR))$, 
	which is 
	$(\mu\otimes\bbP)$-almost everywhere 
	uniquely defined, 
	such that 
	$\eta(\omega) = \widetilde{\eta}(\,\cdot\, , \omega)$ 
	in $L^p(S)$ 
	for $\bbP$-almost all $\omega\in\Omega$, 
	see e.g.\ 
	\cite[Proposition~1.2.25]{AnalysisInBanachSpacesI2016}. 
	The additional integrability condition 
	$\eta\in L^p(S;L^q(\Omega))$ means that the 
	mapping $s\mapsto \widetilde{\eta}(s,\,\cdot\,)$ is 
	in $L^p(S;L^q(\Omega))$. 
	Finally, we sometimes write 
	$\eta(s)$ 
	or 
	$\eta(s,\omega)$ 
	for $s\in S$, $\omega\in\Omega$, 
	and it will be clear from the context 
	that we are referring to 
	$\widetilde{\eta}(s,\,\cdot\,)$ 
	and $\widetilde{\eta}(s,\omega)$, respectively. 
\end{remark} 

\begin{definition}[Approximation by averaging]
\label{def:approx-averaging} 
	We say that a $\sigma$-finite 
	measure space 
	$(S,\cS,\mu)$ satisfies the 
	\emph{approximation-by-averaging property} 
	if, for every $n \in \bbN$, 
	there exists a family  
	$\mathfrak{B}_n 
	= 
	\{ B_{\indk}^n : \indk\in \mathscr{K}_n \}$, 
	which contains finitely or countably infinitely many  
	pairwise disjoint measurable subsets of $S$ 
	of positive, finite $\mu$-measure,  
	\begin{equation}\label{eq:Bn-definition} 
		\forall \indk \in \mathscr{K}_n : 
		\quad 
		B_\indk^n \in\cS, 
		\quad 
		\mu(B_\indk^n)\in(0,\infty), 
		\quad 
		B_\indk^n \cap B_j^n = \emptyset 
		\;\;\text{if}\;\;  
		j\neq \indk,  
	\end{equation}
	such that,  
	for any  
	Banach space $(E,\norm{\,\cdot\,}{E})$, 
	all $p\in[1,\infty)$, 
	and every function $f\in L^p(S;E)$, 
	the sequence of functions 
	$(f_n)_{n\in\bbN}$ 
	defined by averaging $f$ 
	with respect to $\mathfrak{B}_n$, 
	\begin{equation}\label{eq:fn-definition} 
		f_n 
		:= 
		\sum_{\indk\in\mathscr{K}_n} 
		\mathbf 1_{B_\indk^n} 
		\left( 
		\frac{1}{\mu(B_\indk^n)} \int_{B_\indk^n} f(s) \,\rd\mu(s) 
		\right) , 
	\end{equation}
	converges to $f$ in the norm of $L^p(S;E)$, i.e., 
	$\lim_{n\to\infty} 
	\| f - f_n \|_{L^p(S;E)} 
	= 0$. 
\end{definition} 

\begin{remark} 
	Note that by Definition~\ref{def:approx-averaging}, 
	for every $n\in\bbN$, 
	the family $\mathfrak{B}_n$ 
	and thus
	the index set $\mathscr{K}_n$
	and the subsets $\{B_\indk^n : \indk\in\mathscr{K}_n\}$
	are independent 
	of the choice 
	of the Banach space $E$, 
	the integrability $p\in[1,\infty)$, 
	and the function $f\in L^p(S;E)$. 
\end{remark} 

\begin{remark} 
	The fact that, 
	for every $n\in\bbN$, 
	the sets $\{ B_\indk^n : \indk\in \mathscr{K}_n\}$ 
	of $\mathfrak{B}_n$ 
	are pairwise disjoint implies, 
	firstly, 
	that the mapping $f\mapsto f_n$ 
	in \eqref{eq:fn-definition} 
	is stable in $L^p(S;E)$, 
	$\norm{f_n}{L^p(S;E)} \leq \norm{f}{L^p(S;E)}$, 
	and, secondly, 
	that in the case that $\mathfrak{B}_n$ 
	is countable and $|\mathscr{K}_n| = \infty$, 
	the series in \eqref{eq:fn-definition} 
	converges unconditionally in $\indk\in\mathscr{K}_n$. 
\end{remark} 

\begin{example} 
	Let $D\subseteq \bbR^d$, $d\in\bbN$,  
	be a Borel set in the Euclidean space, 
	$\cB(D)$ be the Borel $\sigma$-algebra generated 
	by all the subsets of~$D$ 
	which are open in $D$, 
	$\lambda_d$ denote the 
	Lebesgue measure on $\bbR^d$, 
	and suppose that 
	$\lambda_d(D) \in (0,\infty]$.  
	Then, $(D,\cB(D),\lambda_d)$  
	has the approximation-by-averaging 
	property. 
	
	Indeed, one may construct the 
	sets in~\eqref{eq:Bn-definition} explicitly: 
	For every $n\in\bbZ$, 
	consider the  
	standard dyadic cubes of side-length $2^{-n}\!$, 
	\[ 
		Q_\indk^n 
		:= 
		2^{-n} \bigl( \indk + [0,1)^d \bigr), 
		\quad 
		\indk \in \bbZ^d, 
	\]
	and define, 
	for every $n\in\bbZ$, 
	the family $\mathfrak{B}_n$ by 
	\begin{equation}\label{eq:Bn-D} 
		\mathfrak{B}_n 
		:= \{B_\indk^n : \indk\in\mathscr{K}_n\}, 
		\quad\;\; 
		B_\indk^n := 
		D\cap Q_\indk^n, 
		\quad\;\; 
		\mathscr{K}_n 
		:= 
		\bigl\{ 
		\indk\in\bbZ^d : 
		\lambda_d (B_\indk^n) > 0 
		\bigr\}.  
	\end{equation}
	Note that its cardinality  
	$|\mathscr{K}_n|$ is at most countably infinite   
	and finite if $D$ is bounded. 
	This construction readily implies that, 
	for every $n\in\bbN$, 
	$\mathfrak{B}_n$ 
	satisfies \eqref{eq:Bn-definition} 
	for the measure space 
	$(S,\cS,\mu) = (D,\cB(D),\lambda_d)$. 
	
	Let $(E,\norm{\,\cdot\,}{E})$ be a Banach space 
	and $p\in[1,\infty)$. 
	It remains to prove that, 
	for every $f\in L^p(D;E)$, 
	the sequence of functions $(f_n)_{n\in\bbN}$ 
	defined as in \eqref{eq:fn-definition} 
	using the families 
	$\mathfrak{B}_n$ 
	from \eqref{eq:Bn-D}, 
	for $n\in\bbN$, 
	converges to~$f$ in $L^p(D;E)$. 
	To this end, we first 
	observe that the 
	Borel $\sigma$-algebra on $\bbR^d$ 
	is the smallest $\sigma$-algebra 
	containing all the dyadic cubes
	$\{Q_\indk^n : \indk \in \bbZ^d, \, n\in\bbZ\}$.  
	Consequently,   
	the Borel $\sigma$-algebra on $D$ 
	coincides with the $\sigma$-algebra 
	generated by the sets 
	$B_\indk^n$ in~\eqref{eq:Bn-D}, i.e., 
	\[ 
		\cB(D) =  
		\sigma\bigl( B_\indk^n : \indk\in\bbZ^d, \, n\in\bbZ \bigr)   
		= 
		\sigma\Bigl( \bigcup\nolimits_{n\in\bbZ} \mathscr{F}_n \Bigr), 
		\qquad 
		\mathscr{F}_n 
		:= 
		\sigma\bigl( B_\indk^n : \indk\in\bbZ^d \bigr). 
	\]  
	Using the notation and definitions of 
	\cite[Chapter~3]{AnalysisInBanachSpacesI2016}, 
	we find that 
	$(\mathscr{F}_n)_{n\in\bbZ}$ is a 
	$\sigma$-finite filtration,   
	and the conditional expectation 
	of $f$ with respect to $\mathscr{F}_n$ 
	is given by $f_n$ in \eqref{eq:fn-definition}, 
	see \cite[Examples~2.6.13 and~3.1.3]{AnalysisInBanachSpacesI2016}. 
	Finally, the forward martingale convergence 
	theorem 
	\cite[Theorem~3.3.2(2)]{AnalysisInBanachSpacesI2016}
	shows that 
	$\lim_{n\to\infty} f_n = f$ 
	in the norm of $L^p(D;E)$.  
\end{example} 

\begin{lemma} 
\label{lem:approx-by-averag} 
	Assume that  
	$(S,\cS,\mu)$ is a  
	$\sigma$-finite measure space 
	which satisfies the approximation-by-averaging 
	property of Definition~\ref{def:approx-averaging}. 
	Let 
	$p,q\in[1,\infty)$, 
	${M\in\bbN}$, 
	and 
	$\eta_1,\ldots,\eta_M$ 
	be 
	independent and identically 
	distributed~$L^p(S)$-valued random variables 
	in $L^q(\Omega;L^p(S)) \cap L^p(S;L^q(\Omega))$ 
	with vanishing mean, 
	$\bbE[\eta_1]=0$. 
	
	There exist  
	sequences 
	$(\eta_1^n)_{n\in\bbN}, \ldots, (\eta_M^n)_{n\in\bbN}$ 
	of simple~functions in $L^p(S;L^q(\Omega))$ 
	with the property that, 
	for every $n\in\bbN$, there are finitely many pairwise disjoint 
	sets $A_1^n, \ldots, A_{K_n\!}^n \in \cS$  
	with positive, finite $\mu$-measure 
	and zero-mean 
	random variables 
	$Y_{11}^n, \ldots, Y_{1K_n}^n, 
	\ldots, 
	Y_{M1}^n, \ldots, Y_{MK_n\!}^n \in L^q(\Omega)$ 
	such that, for all $j\in\{1,\ldots,M\}$, 
	\begin{equation}\label{eq:lem:approx-by-averag-1}
		\eta_j^n 
		= 
		\sum\limits_{\indk=1}^{K_n} 
		\mathbf 1_{A_\indk^n} 
		Y_{j\indk}^n  
		\quad 
		\forall n \in\bbN, 
		\qquad 
		\lim_{n\to\infty} 
		\| \eta_j - \eta_j^n \|_{L^p(S;L^q(\Omega))} = 0, 
	\end{equation} 
	and, for all $n\in\bbN$ and every $\indk\in\{1,\ldots,K_n\}$, 
	we have that 
	\begin{equation}\label{eq:lem:approx-by-averag-2} 
		Y_{1\indk}^n, \ldots, Y_{M\indk}^n 
		\text{ are independent and  
		identically distributed}. 
	\end{equation}
\end{lemma}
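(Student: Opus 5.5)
The construction has two stages. First average each $\eta_j$ over the sets of $\mathfrak{B}_n$, viewing $\eta_j$ as an element of $L^p(S;L^q(\Omega))$. Then truncate the (possibly countable) sum to finitely many sets, so that the tail is small.

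\emph{Averaging step.} Regard each $\eta_j$ as the function $s\mapsto \widetilde{\eta}_j(s,\cdot)\in L^q(\Omega)$, which lies in $L^p(S;L^q(\Omega))$ by assumption (cf.~Remark~\ref{rem:fct-on-product-space}). Apply Definition~\ref{def:approx-averaging} with the Banach space $E:=L^q(\Omega)$. For each $n$ this gives the averaged functions
\[
\eta_j^{(n)} := \sum_{\indk\in\mathscr{K}_n} \mathbf 1_{B_\indk^n}\, Z_{j\indk}^n,
\qquad
Z_{j\indk}^n := \frac{1}{\mu(B_\indk^n)}\int_{B_\indk^n} \eta_j(s)\,\rd\mu(s)\in L^q(\Omega),
\]
where the integral is a Bochner integral in $L^q(\Omega)$. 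It is well defined because $\eta_j\in L^1(B_\indk^n;L^q(\Omega))$, using Hölder and $\mu(B_\indk^n)<\infty$. Definition~\ref{def:approx-averaging} then gives $\eta_j^{(n)}\to\eta_j$ in $L^p(S;L^q(\Omega))$.

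\emph{Zero mean and i.i.d.\ property.} The Bochner integral commutes with the bounded linear functional $\bbE\colon L^q(\Omega)\to\bbR$. Hence $\bbE[Z_{j\indk}^n]=\mu(B_\indk^n)^{-1}\int_{B_\indk^n}\bbE[\widetilde\eta_j(s,\cdot)]\rd\mu(s)$. Fubini, justified by the integrability just noted, identifies this with $\mu(B_\indk^n)^{-1}\int_\Omega\int_{B_\indk^n}\widetilde\eta_j\,\rd\mu\,\rd\bbP$. It also shows that pathwise $Z_{j\indk}^n(\omega)=\mu(B_\indk^n)^{-1}\langle \mathbf 1_{B_\indk^n},\eta_j(\omega)\rangle$ for $\bbP$-a.e.\ $\omega$. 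Since $\mathbf 1_{B_\indk^n}\in L^{p'}(S)$, this is a continuous linear functional $g_\indk$ applied to $\eta_j(\omega)$. Consequently $\bbE[Z^n_{j\indk}]=\mu(B_\indk^n)^{-1} g_\indk(\bbE[\eta_j])=0$. Moreover $Z_{j\indk}^n=g_\indk\circ\eta_j$, with the same Borel map $g_\indk$ for every $j$. Independence and identical distribution of $\eta_1,\ldots,\eta_M$ therefore transfer to $Z_{1\indk}^n,\ldots,Z_{M\indk}^n$, which gives \eqref{eq:lem:approx-by-averag-2}.

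\emph{Truncation step.} When $\mathscr{K}_n$ is infinite, note that $\|\eta_j^{(n)}\|^p_{L^p(S;L^q(\Omega))}=\sum_\indk \mu(B_\indk^n)\|Z_{j\indk}^n\|_{L^q}^p$ converges. We may therefore choose a finite set $F_n\subseteq\mathscr{K}_n$ such that, simultaneously for all $j\le M$, the tail satisfies $\sum_{\indk\notin F_n}\mu(B^n_\indk)\|Z^n_{j\indk}\|^p_{L^q}<n^{-p}$. Relabel $\{B_\indk^n:\indk\in F_n\}$ as $A_1^n,\dots,A_{K_n}^n$ and set $Y_{j\indk}^n:=Z_{j\indk}^n$ for these indices. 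The resulting $\eta_j^n$ has the form \eqref{eq:lem:approx-by-averag-1}, and the triangle inequality gives $\|\eta_j-\eta_j^n\|\le\|\eta_j-\eta_j^{(n)}\|+n^{-1}\to0$.

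The main obstacle is the pathwise identification of the $L^q(\Omega)$-valued Bochner average with $g_\indk(\eta_j(\omega))$. This identification is what makes both the zero mean and the i.i.d.\ property transparent. I would prove it via Fubini on $B_\indk^n\times\Omega$, testing against $\mathbf 1_G$ for $G\in\cA$, i.e.\ against elements of $L^{q'}(\Omega)$.
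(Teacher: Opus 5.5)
Your proposal is correct and follows the paper's own proof. You average over $\mathfrak{B}_n$ using the approximation-by-averaging property with $E=L^q(\Omega)$, identify each average with $\langle g_\kappa^n,\eta_j\rangle$ for $g_\kappa^n=\mu(B_\kappa^n)^{-1}\mathbf 1_{B_\kappa^n}\in L^{p'}(S)$ to get zero mean and the i.i.d.\ property, and truncate to finitely many sets when $\mathscr{K}_n$ is infinite. Your truncation, which picks one finite index set for all $j$ via the identity $\|\sum_{\kappa\notin F_n}\mathbf 1_{B_\kappa^n}Z_{j\kappa}^n\|_{L^p(S;L^q(\Omega))}^p=\sum_{\kappa\notin F_n}\mu(B_\kappa^n)\|Z_{j\kappa}^n\|_{L^q(\Omega)}^p$, is a tidier version of the paper's union of the sets $\mathscr{J}_n^j$; your Fubini argument also justifies the pathwise identification that the paper only asserts.
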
 	

\begin{proof} 
	Since $(S,\cS,\mu)$ has the 
	approximation-by averaging property, 
	there exists a sequence of families 
	$\mathfrak{B}_n = \{B_\indk^n : \indk \in \mathscr{K}_n\}$, $n\in\bbN$, 
	each of finite or countably infinite 
	cardinality~$|\mathscr{K}_n|$ and 
	containing subsets $B_\indk^n\subseteq S$ satisfying \eqref{eq:Bn-definition}, 
	such that for any $f \in L^p(S;L^q(\Omega))$  
	the sequence $f_n$ defined via \eqref{eq:fn-definition} 
	converges to $f$ in $L^p(S;L^q(\Omega))$. 
	
	\emph{Case 1:} $|\mathscr{K}_n| < \infty$ for all $n\in\bbN$. 
	If the cardinality $K_n:=|\mathscr{K}_n|$ 
	of the family $\mathfrak{B}_n$ is finite for every $n\in\bbN$, 
	we may choose the sets 
	$\{A_1^n, \ldots, A_{K_n}^n\} 
	= 
	\{B_\indk^n : \indk\in\mathscr{K}_n\}$ 
	(in an arbitrary but fixed ordering) 
	and define, 
	for $j\in\{1,\ldots,M\}$ and $\indk\in\{1,\ldots,K_n\}$,  
	the random variables 
	$Y_{j\indk}^n\from\Omega\to\bbR$ as 
	\begin{equation}\label{eq:Yjkn-definition} 
		Y_{j\indk}^n 
		:= 
		\frac{1}{\mu(A_\indk^n)} \int_{A_\indk^n} \eta_j(s) \,\rd\mu(s) 
		= 
		\langle g_\indk^n, \eta_j \rangle , 
		\qquad 
		g_\indk^n := \frac{1}{\mu(A_\indk^n)} \mathbf 1_{A_\indk^n} , 
	\end{equation}
	where $\langle\,\cdot\,, \,\cdot\,\rangle$ 
	denotes the duality pairing between 
	$(L^p(S))' \cong L^{p'}\!(S)$ and $L^p(S)$. 
	Note that 
	$g_\indk^n\in L^{p'}\!(S)$ holds 
	with $\norm{g_\indk^n}{L^{p'}\!(S)} = \mu(A_\indk^n)^{-\nicefrac{1}{p}}\!$, 
	for all $n\in\bbN$ and every $\indk\in\{1,\ldots,K_n\}$. 
	As $\eta_1, \ldots, \eta_M$ are assumed to be zero-mean and 
	independent, identically distributed as 
	$L^p(S)$-valued random variables in $L^q(\Omega;L^p(S))$, 
	we thus conclude that, 
	for every $j\in\{1,\ldots, M\}$, 
	all $n\in\bbN$ 
	and every $\indk\in\{1,\ldots,K_n\}$,   
	\begin{equation}\label{eq:Yjkn-centered} 
		Y_{j\indk}^n \in L^q(\Omega)
		\quad \text{with} \quad 
		\bbE\bigl[Y_{j\indk}^n \bigr]
		= 
		\langle g_\indk^n, \bbE[\eta_j] \rangle 
		= 0, 
	\end{equation}
	and, for all $n\in\bbN$ and every 
	$\indk\in\{1,\ldots,K_n\}$, 
	\begin{equation}\label{eq:Yjkn-iid} 
		\bigl( Y_{j\indk}^n \bigr)_{j=1}^M 
		= 
		\bigl( \langle g_\indk^n, \eta_j \rangle  \bigr)_{j=1}^M  
		\quad 
		\text{are independent and identically distributed}. 
	\end{equation} 
	Finally, setting 
	$\eta_j^n := 
	\sum_{\indk=1}^{K_n} 
	\mathbf 1_{A_\indk^n} 
	Y_{j\indk}^n$, 
	$n \in\bbN$, 
	the convergence   
	$\lim_{n\to\infty}  \eta_j^n  = \eta_j$ 
	in $L^p(S;L^q(\Omega))$, 
	for every $j\in\{1,\ldots,M\}$,  
	is immediate from the 
	construction using the 
	approximation-by-averaging property 
	of~$(S,\cS,\mu)$. 
	
	\emph{Case 2:} $|\mathscr{K}_n|$ 
	is countably infinite for at least one $n\in\bbN$. 
	Then,  
	by unconditional convergence of the series 
	in \eqref{eq:fn-definition}, 
	for every $n\in\bbN$ 
	and all $j\in\{1,\ldots,M\}$, 
	there exists a finite subset 
	$\mathscr{J}_n^j \subseteq \mathscr{K}_n$, i.e., 
	$|\mathscr{J}_n^j| < \infty$, 
	such that 
	\[
		\Biggl\| 
		\sum_{\indk\in\mathscr{K}_n \setminus \mathscr{J}_n^j} 		
		\mathbf 1_{B_\indk^n} \, 
		\overline{\eta}_{j\indk}^n 
		\Biggr\|_{L^p(S;L^q(\Omega))}
		< \frac{1}{Mn}, 
		\qquad 
		\overline{\eta}_{j\indk}^n 
		:= 
		\frac{1}{\mu(B_\indk^n)} \int_{B_\indk^n} \eta_j(s) \,\rd\mu(s). 
	\]
	We then consider, for every $n\in\bbN$, 
	\[ 
		\mathscr{J}_n 
		:= 
		\bigcup\limits_{j=1}^M \mathscr{J}_n^j , 
		\qquad 
		K_n := |\mathscr{J}_n| < \infty, 
		\qquad 
		\{A_1^n, \ldots, A_{K_n}^n\} 
		= 
		\{B_\indk^n : \indk\in\mathscr{J}_n\}, 
	\]
	where again the ordering 
	of the sets 
	$A_1^n, \ldots, A_{K_n}^n$
	is arbitrary but fixed. 
	Furthermore, we 
	define, for every $j\in\{1,\ldots,M\}$, 
	all $n\in\bbN$ and every $\indk\in\{1,\ldots,K_n\}$, 
	the random variables 
	$Y_{j\indk}^n$ as in \eqref{eq:Yjkn-definition}. 
	Then, 
	\eqref{eq:Yjkn-centered} 
	and \eqref{eq:Yjkn-iid} still hold 
	and, for every $j\in\{1,\ldots,M\}$, 
	the sequence 
	$\eta_j^n := \sum_{\indk=1}^{K_n} 
	\mathbf 1_{A_\indk^n} 
	Y_{j\indk}^n$, 
	$n \in\bbN$, 
	converges to~$\eta_j$ 
	in~$L^p(S;L^q(\Omega))$. 
	The latter follows, since 
	the sequence 
	\[
		\widetilde{\eta}_j^n 
		:=
		\sum_{\indk\in\mathscr{K}_n} 		
		\mathbf 1_{B_\indk^n} 
		\biggl( 
		\frac{1}{\mu(B_\indk^n)} \int_{B_\indk^n} \eta_j(s) \,\rd\mu(s) 
		\biggr)  
		= 
		\sum_{\indk\in\mathscr{K}_n} 		
		\mathbf 1_{B_\indk^n} \, 
		\overline{\eta}_{j\indk}^n , 
		\quad 
		n\in\bbN, 
	\] 
	has the limit $\eta_j$ in $L^p(S;L^q(\Omega))$ 
	by the approximation-by-averaging 
	property of $(S,\cS,\mu)$, 
	and 
	\begin{align*} 
		\| \widetilde{\eta}_j^n 
		&- 
		\eta_j^n \|_{L^p(S;L^q(\Omega))} 
		= 
		\Biggl\| 
		\sum_{\indk\in\mathscr{K}_n \setminus \mathscr{J}_n^j} 		
		\mathbf 1_{B_\indk^n} \, 
		\overline{\eta}_{j\indk}^n 
		- 
		\sum_{\indk\in\mathscr{J}_n \setminus \mathscr{J}_n^j} 		
		\mathbf 1_{B_\indk^n} \, 
		\overline{\eta}_{j\indk}^n 
		\Biggr\|_{L^p(S;L^q(\Omega))}  
		\\
		&\leq 
		\Biggl\| 
		\sum_{\indk\in\mathscr{K}_n \setminus \mathscr{J}_n^j} 		
		\mathbf 1_{B_\indk^n} \, 
		\overline{\eta}_{j\indk}^n 
		\Biggr\|_{L^p(S;L^q(\Omega))}  
		+ 
		\sum_{\substack{i=1 \\ i\neq j}}^M 
		\Biggl\| 
		\sum_{\indk\in\mathscr{J}_n^i \setminus \mathscr{J}_n^j} 		
		\mathbf 1_{B_\indk^n} \, 
		\overline{\eta}_{j\indk}^n 
		\Biggr\|_{L^p(S;L^q(\Omega))}  
		< \frac{1}{n}, 
	\end{align*} 
	by the choice of $\mathscr{J}_n^j$ 
	and $\mathscr{J}_n$, 
	completing the proof 
	in the second case. 
\end{proof} 

\begin{remark} 
The result of Lemma \ref{lem:approx-by-averag}, 
as well as the other results of this section, 
remain valid
\emph{without} assuming the approximation-by-averaging property 
of Definition~\ref{def:approx-averaging} 
if we restrict to a probability space $(\Omega,\mathcal{A},\mathbb{P})$ 
which has a tensor product structure,
that is, 
$(\Omega,\mathcal{A},\mathbb{P})=
(\wOmega^{M}\!, \wA^{M}\!, \wP^{M})$, 
$(\wOmega,\wA,\wP)$ being a complete probability space, 
and to random variables $\eta_1,\dots,\eta_M$ that are
identical copies of a given zero-mean 
$L^p(S)$-valued random variable $\eta$ 
defined on $\wOmega$. More precisely,
assume that $p,q\in [1,\infty)$ and $\eta$ is a random variable in 
$L^q(\wOmega;L^p(S))\cap L^p(S;L^q(\wOmega))$ 
with zero mean.
Then, $\eta$ can be approximated in $L^p(S;L^q(\wOmega))$ 
by a sequence of simple functions
$(\weta^n)_{n\in\mathbb{N}}$, 
satisfying 
$\weta^n(s,\,\cdot\,)=\sum_{\kappa=1}^{K_n} \mathbf{1}_{A^n_\kappa}(s) \widetilde{Y}^n_{\kappa}(\,\cdot\,)$, 
with $\widetilde{Y}^n_{\kappa} \in L^q(\wOmega)$ for every $\kappa\in \{1,\dots,K_n\}$ 
and $n\in\mathbb{N}$. 
Note that, for every $n\in\mathbb{N}$, 
$\widetilde{Y}^n_1, \ldots, \widetilde{Y}^n_{K_n}$ may not be centered. 
However, a sequence of simple approximations 
$(\eta^n)_{n\in\mathbb{N}}$ with centered random variables 
that still converges to $\eta$ in $L^p(S;L^q(\wOmega))$ 
can be obtained by setting 
$\eta^n(s,\,\cdot\,)=\sum_{\kappa=1}^{K_n} \mathbf{1}_{A^n_\kappa}(s) Y^n_\kappa(\,\cdot\,)$, 
where $Y^n_\kappa := \widetilde{Y}^n_\kappa - \widetilde{\bbE} [\widetilde{Y}^n_\kappa ]$.
Let now $\eta_j\from S\times \Omega\rightarrow \bbR$ be defined by 
$\eta_j(s,\widetilde{\omega}_1,\ldots,\widetilde{\omega}_M) := \eta(s,\widetilde{\omega}_j)$ 
for $j\in \{ 1,\dots,M \}$. 
Then, by construction, $\eta_1,\dots,\eta_M$ are independent and identically distributed 
on $(\Omega,\mathcal{A},\mathbb{P})=
(\wOmega^{M}\!, \wA^{M}\!, \wP^{M})$
and they belong to $L^q(\Omega;L^p(S))\cap L^p(S;L^q(\Omega))$. 
In addition, they can be simultaneously approximated 
over the same partition $\{ A^n_\kappa \}_{\kappa=1}^{K_n}$ 
by the sequences of simple functions 
$( \eta_j^n )_{n\in\bbN} \subset L^p(S;L^q(\Omega))$, 
$j\in\{1,\ldots,M\}$, defined by 
$\eta^n_j := \sum_{\kappa=1}^{K_n} \mathbf{1}_{A^n_\kappa} Y^n_{j\kappa}$, 
$n\in\mathbb{N}$,
where the random variables 
$Y^n_{j \kappa}(\widetilde{\omega}_1,\dots,\widetilde{\omega}_M) := Y^n_\kappa(\widetilde{\omega}_j)$ 
are centered, independent and identically distributed random variables 
in $L^q(\Omega)$. 
\end{remark}

\begin{proposition}\label{prop:minkowski-trick}  
	Let $(S,\cS,\mu)$ be a $\sigma$-finite measure space 
	with the 
	approximation-by-averaging property, 
	see Definition~\ref{def:approx-averaging}. 
	Let 
	$p\in [1,\infty)$, $q\in[p,\infty)$, 
	$\bar{q}:=\min\{q,2\}$, $M\in\bbN$, 
	and  
	$\eta_1,\ldots,\eta_M$ 
	be 
	independent and identically 
	distributed~$L^p(S)$-valued random variables 
	in $L^q(\Omega;L^p(S)) \cap L^p(S;L^q(\Omega))$ 
	with vanishing mean, 
	$\bbE[\eta_1]=0$. Then, 
	\begin{equation}\label{eq:prop:minkowski-trick}
		\biggl\|  
		\frac{1}{M} 
		\sum_{j=1}^M 
		\eta_j 
		\biggr\|_{L^q(\Omega;L^p(S))} 
		\leq 
		2 B_q
		M^{-\left( 1 - \frac{1}{\bar{q}} \right)} 
		\|  \eta_1 \|_{L^p(S;L^q(\Omega))} , 
	\end{equation} 
	where $B_q\in(0,\infty)$ is the Khintchine constant 
	from Definition~\ref{def:khintchine}. 
\end{proposition}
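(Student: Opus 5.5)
We first reduce to simple functions via Lemma~\ref{lem:approx-by-averag}, then prove the estimate for these using Minkowski's integral inequality (Theorem~\ref{thm:minkowski-pq}) followed by a pointwise (in $s$) scalar Monte Carlo bound, and finally pass to the limit.

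\emph{Step 1 (reduction).} By Lemma~\ref{lem:approx-by-averag}, there are simple approximations $\eta_j^n=\sum_{\indk=1}^{K_n}\mathbf 1_{A_\indk^n}Y_{j\indk}^n$ converging to $\eta_j$ in $L^p(S;L^q(\Omega))$. For each fixed $\indk$, the variables $Y_{1\indk}^n,\ldots,Y_{M\indk}^n$ are i.i.d.\ and centered. Since $p\le q$, Theorem~\ref{thm:minkowski-pq} gives $\|\cdot\|_{L^q(\Omega;L^p(S))}\le\|\cdot\|_{L^p(S;L^q(\Omega))}$. Hence both sides of \eqref{eq:prop:minkowski-trick} are continuous along this approximation:
\begin{itemize}
\item on the left, the error $\bigl\|\tfrac1M\sum_j(\eta_j-\eta_j^n)\bigr\|_{L^q(\Omega;L^p(S))}$ is at most $\tfrac1M\sum_j\|\eta_j-\eta_j^n\|_{L^p(S;L^q(\Omega))}\to0$;
\item on the right, $\|\eta_1^n\|_{L^p(S;L^q(\Omega))}\to\|\eta_1\|_{L^p(S;L^q(\Omega))}$.
\end{itemize}
It therefore suffices to prove the inequality for the $\eta_j^n$, with right-hand side $\|\eta_1^n\|_{L^p(S;L^q(\Omega))}$.

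\emph{Step 2 (Minkowski).} Apply Theorem~\ref{thm:minkowski-pq} to the jointly measurable function $(s,\omega)\mapsto \tfrac1M\sum_j\eta_j^n(s,\omega)$. This yields
\[
\biggl\|\frac1M\sum_{j=1}^M\eta_j^n\biggr\|_{L^q(\Omega;L^p(S))}\le\Biggl(\sum_{\indk=1}^{K_n}\mu(A_\indk^n)\,\biggl\|\frac1M\sum_{j=1}^MY_{j\indk}^n\biggr\|_{L^q(\Omega)}^p\Biggr)^{1/p}.
\]

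\emph{Step 3 (scalar Monte Carlo bound).} For each $\indk$, apply Lemma~\ref{lem:symmetrization} to the centered independent scalars $Y_{j\indk}^n$, introducing an independent Rademacher family on a product space. Then use Fubini together with the Khintchine upper bound (Definition~\ref{def:khintchine}):
\[
\biggl\|\sum_j Y_{j\indk}^n\biggr\|_{L^q}\le 2B_q\,\biggl\|\Bigl(\sum_j|Y_{j\indk}^n|^2\Bigr)^{1/2}\biggr\|_{L^q}\le 2B_q\,\biggl\|\Bigl(\sum_j|Y_{j\indk}^n|^{\bar q}\Bigr)^{1/\bar q}\biggr\|_{L^q},
\]
where the second inequality uses $\ell^{\bar q}\hookrightarrow\ell^2$ since $\bar q\le2$. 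Since $q\ge\bar q$, the triangle inequality in $L^{q/\bar q}(\Omega)$ gives
\[
\biggl\|\sum_j|Y_{j\indk}^n|^{\bar q}\biggr\|_{L^{q/\bar q}}\le\sum_j\|Y_{j\indk}^n\|_{L^q}^{\bar q}=M\,\|Y_{1\indk}^n\|_{L^q}^{\bar q},
\]
using identical distribution in the last step. Combining, $\bigl\|\tfrac1M\sum_jY_{j\indk}^n\bigr\|_{L^q}\le 2B_qM^{-(1-1/\bar q)}\|Y_{1\indk}^n\|_{L^q(\Omega)}$.

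\emph{Step 4 (conclusion).} Insert this bound into Step~2. The remaining sum equals $\|\eta_1^n\|_{L^p(S;L^q(\Omega))}$, because the sets $A_\indk^n$ are pairwise disjoint. This proves the inequality for the simple approximations, and Step~1 passes it to $\eta_1,\ldots,\eta_M$.

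The main subtlety is the reduction in Step~1. Step~3 needs the i.i.d.\ and centering structure separately for each fixed $s$, and this is only available cleanly after discretizing in $S$ on a partition common to all $j$. That is exactly what Lemma~\ref{lem:approx-by-averag} provides. The remaining limit argument is routine given the Minkowski embedding.
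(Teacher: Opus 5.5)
Your proof is correct and follows the paper's argument. Both use the same ingredients: reduction to simple functions via Lemma~\ref{lem:approx-by-averag}, Minkowski's integral inequality, symmetrization with the Khintchine bound, the triangle inequality in $L^{q/\bar{q}}(\Omega)$, disjointness of the cells, and the same limiting argument. The only difference is the order of steps: you apply Minkowski first and then symmetrize each scalar cell average $\frac{1}{M}\sum_j Y_{j\indk}^n$ separately, which needs only the per-cell independence in \eqref{eq:lem:approx-by-averag-2}, whereas the paper symmetrizes the $L^p(S)$-valued sum and then applies Minkowski on $\Omega\times\widetilde{\Omega}$, which uses independence of the $\eta_j^n$ as $L^p(S)$-valued random variables (true by construction).
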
 

\begin{remark} 
	Under the assumptions 
	of Proposition~\ref{prop:minkowski-trick}, 
	the analysis based on the 
	Rademacher type of $L^p(S)$ 
	(e.g., 
	\cite[Proposition 5.10]{CoxEtAl2021}) 
	yields, for $p\in[1,2]$ and $q\in[p,\infty)$, the bound   
	\[  
		\biggl\| 
		\frac{1}{M}
		\sum_{j=1}^M 
		\eta_j 
		\biggr\|_{L^q(\Omega;L^p(S))} 
		\leq 
		2 K_{q,p}  
		M^{-\left( 1 - \frac{1}{p} \right)}		
		\norm{ \eta_1 }{L^q(\Omega; L^p(S))}  . 
	\] 
	Proposition~\ref{prop:minkowski-trick} 
	shows that, for the case $q\geq p$, 
	the Monte Carlo convergence rate can be improved 
	from $1- p^{-1}$ 
	to $1- \bar{q}^{-1}$ 
	provided that  
	the stronger 
	integrability condition 
	$\eta_1,\ldots,\eta_M\in L^p(S;L^q(\Omega))$ 
	is satisfied. 
\end{remark} 

\begin{remark} 
	The statements of Lemma~\ref{lem:approx-by-averag}
	and Proposition~\ref{prop:minkowski-trick} 
	can be extended to the case of independent, 
	identically distributed  
	random variables $\eta_1,\ldots,\eta_M$ 
	with vanishing mean, 
	which take values in a \emph{Bochner space} $L^p(S;E)$, and 
	which satisfy the integrability conditions 
	$\eta_1,\ldots,\eta_M \in L^q(\Omega;L^p(S;E)) \cap L^p(S;L^q(\Omega;E))$. 
	More precisely, assuming that 
	$(S,\cS,\mu)$ is a $\sigma$-finite measure space 
	with the 
	approximation-by-averaging property,   
	$p\in [1,\infty)$, $q\in[p,\infty)$, 
	$\bar{q}:=\min\{q,2\}$,  
	and  
	$E$ is a Banach space of type $r\in[1,\bar{q}]$, 
	the analogue of the estimate \eqref{eq:prop:minkowski-trick} 
	reads as follows, 
	\[
		\biggl\|  
		\frac{1}{M} 
		\sum_{j=1}^M 
		\eta_j 
		\biggr\|_{L^q(\Omega;L^p(S;E))} 
		\leq 
		2 K_{q,r} \tau_r(E) 
		M^{-\left( 1 - \frac{1}{r} \right)} 
		\|  \eta_1 \|_{L^p(S;L^q(\Omega;E))} . 
	\]
\end{remark} 

\begin{proof}[Proof of Proposition~\ref{prop:minkowski-trick}]
	\emph{Step 1:}  
	We first prove the claim  
	in the case that, 
	for every $j\in\{1,\ldots,M\}$, 
	$\eta_j\in L^q(\Omega;L^p(S)) \cap 
	L^p(S;L^q(\Omega))$ is of the form 
	$\eta_j 
	= 
	\sum_{\indk=1}^K 
	\mathbf 1_{A_\indk} 
	Y_{j\indk}$, 
	where $K\in\bbN$, 
	the sets 
	$A_1,\ldots,A_K\in\cS$ are pairwise disjoint,  
	$\mu(A_\indk)\in(0,\infty)$  
	for every $\indk\in\{1,\ldots,K\}$, 
	and 
	\[
		\forall 
		\indk\in\{1,\ldots,K\}: 
		\quad 
		Y_{1\indk}, \ldots,  Y_{M\indk} \in L^q(\Omega)
		\   
		\text{are i.i.d.\ and have mean zero}. 
	\] 
	
	To this end,  
	let $(\Omegat,\cAt,\bbPt)$ be a second probability space 
	with expectation operator~$\bbEt$, and 
	$(\Omega\times\Omegat,\cA\otimes\cAt,\bbP\otimes\bbPt)$ 
	denote 
	the product probability space. 
	Let $(r_j)_{j=1}^M$ be a Rademacher family on $(\Omegat,\cAt,\bbPt)$
	and, for $j\in \left\{1,\dots,M\right\}$, 
	let $\boldsymbol{r}_j\from\Omega\times\Omegat\rightarrow \left\{-1,1\right\}$ 
	and 
	$\boldsymbol{\eta}_j\from\Omega\times\Omegat\rightarrow L^p(S)$ 
	denote the mappings that satisfy 
	\[
		\forall 
		(\omega,\omegat)\in\Omega\times\Omegat : 
		\quad 
		\boldsymbol{r}_j(\omega,\omegat)=r_j(\omegat), 
		\quad\;\;  
		\boldsymbol{\eta}_j(\omega,\omegat)=\eta_j(\omega)
		= 
		\sum_{\indk=1}^K 
		\mathbf 1_{A_\indk} Y_{j\indk}(\omega). 
	\]
	Note that on $(\Omega\times \Omegat,\cA\otimes\cAt,\bbP\otimes\bbPt)$ 
	the random variables $(\boldsymbol{r})_{j=1}^M$ are a Rademacher family,
	and $(\boldsymbol{\eta}_j)_{j=1}^M$ and $(\boldsymbol{r}_j)_{j=1}^M$ are independent. 
	Then, by symmetrization  
	and Minkowski's integral inequality 
	(see Lemma~\ref{lem:symmetrization}, 
	Theorem~\ref{thm:minkowski-pq} 
	and Remark~\ref{rem:fct-on-product-space}),  
	\begin{align*} 
		\biggl\|  
		\sum_{j=1}^M 
		\boldsymbol{\eta}_j 
		&\biggr\|_{L^q(\Omega\times\widetilde{\Omega};L^p(S))} 
		\leq 
		2\, 
		\biggl\|  
		\sum_{j=1}^M 
		\boldsymbol{r}_j 
		\boldsymbol{\eta}_j 
		\biggr\|_{L^q(\Omega\times\widetilde{\Omega};L^p(S))} 
		\leq 
		2\,
		\biggl\|  
		\sum_{j=1}^M 
		\boldsymbol{r}_j 
		\boldsymbol{\eta}_j 
		\biggr\|_{L^p(S; L^q(\Omega\times\widetilde{\Omega}))} 
		\\
		&= 
		2 
		\Biggl[ 
		\int_S 
		\Biggl(		
		\int_\Omega 
		\biggl\| 
		\sum_{j=1}^M r_j(\,\cdot\,)
		\eta_j(s, \omega)
		\biggr\|_{L^q(\widetilde{\Omega};\bbR)}^q 
		\rd \bbP(\omega) 
		\Biggr)^{\frac{p}{q}} 
		\rd\mu(s)
		\Biggr]^{\frac{1}{p}} 
		. 
	\end{align*} 
	Furthermore, 
	by the Khintchine inequalities 
	(see Definition~\ref{def:khintchine})
	and the fact that $(\bbR,|\,\cdot\,|)$ has 
	Rademacher type 
	$\bar{q}:= \min\{q,2\}$, 
	with $\tau_{\bar{q}}(\bbR)=1$, 
	we find that  
	\begin{align*} 
		\biggl\|  
		\sum_{j=1}^M 
		\boldsymbol{\eta}_j 
		&\biggr\|_{L^q(\Omega\times\widetilde{\Omega};L^p(S))} 
		\leq 
		2 B_q 
		\Biggl[ 
		\int_S 
		\Biggl(  
		\int_\Omega 
		\biggl\| 
		\sum_{j=1}^M r_j(\,\cdot\,)
		\eta_j(s, \omega)
		\biggr\|_{L^{\bar{q}}(\widetilde{\Omega};\bbR)}^q 
		\! \rd \bbP(\omega) 
		\Biggr)^{\frac{p}{q}} 
		\rd\mu(s)
		\Biggr]^{\frac{1}{p}} 
		\\
		&\leq 
		2 B_q 
		\Biggl[  
		\int_S 
		\Biggl( 
		\int_\Omega 
		\biggl( 
		\sum_{j=1}^M |  
		\eta_j(s, \omega) |^{\bar{q}} 
		\bigg)^{\nicefrac{q}{\bar{q}}}   
		\rd \bbP(\omega) 
		\Biggr)^{\frac{\bar{q}}{q} \cdot \frac{p}{\bar{q}}} 
		\rd\mu(s)
		\Biggr]^{\frac{1}{p}} 
		\\
		&\leq 
		2 B_q 
		\Biggl[  
		\int_S 
		\Biggl( 
		\sum_{j=1}^M \|   
		\eta_j(s, \,\cdot\,) \|_{L^q(\Omega;\bbR)}^{\bar{q}} 
		\Biggr)^{\frac{p}{\bar{q}}}    
		\rd \mu( s ) 
		\Biggr]^{\frac{1}{p}} , 
	\end{align*} 
	where we used the triangle 
	inequality on~$L^{\nicefrac{q}{\bar{q}}}(\Omega;\bbR)$ 
	in the last line. 
	Finally, by the specific 
	form of the random variables 
	$\eta_1,\ldots,\eta_M$ assumed in \emph{Step 1}, 
	we conclude~that 
	\begin{align*} 
		\biggl\|  
		\sum_{j=1}^M 
		\boldsymbol{\eta}_j 
		&\biggr\|_{L^q(\Omega\times\widetilde{\Omega};L^p(S))} 
		\leq 
		2 B_q 
		\Biggl[  
		\sum_{\indk=1}^K 
		\mu(A_\indk) 
		\biggl( 
		\sum_{j=1}^M \|   
		Y_{j \indk} \|_{L^q(\Omega;\bbR)}^{\bar{q}} 
		\biggr)^{\nicefrac{p}{\bar{q}}}     
		\Biggr]^{\frac{1}{p}} 
		\\
		&= 
		2 B_q 
		M^{\frac{1}{\bar{q}}} 
		\Biggl[  
		\sum_{\indk=1}^K 
		\mu(A_\indk)  
		\| Y_{1\indk} \|_{L^q(\Omega;\bbR)}^{p}  
		\Biggr]^{\frac{1}{p}} 
		= 
		2 B_q 
		M^{\frac{1}{\bar{q}}} 
		\|  \eta_1 \|_{L^p(S;L^q(\Omega))} . 
	\end{align*}  
	Consequently, 
	we obtain  
	\[
		\biggl\|  
		\frac{1}{M} 
		\sum_{j=1}^M 
		\eta_j 
		\biggr\|_{L^q(\Omega;L^p(S))} 
		= 
		\biggl\|  
		\frac{1}{M} 
		\sum_{j=1}^M 
		\boldsymbol{\eta}_j 
		\biggr\|_{L^q(\Omega\times\widetilde{\Omega};L^p(S))}
		\leq 
		2 B_q 
		M^{-\left( 1 - \frac{1}{\bar{q}} \right)} 
		\|  \eta_1 \|_{L^p(S;L^q(\Omega))}   . 
	\]

	\emph{Step 2:} 
	$\eta_1,\ldots,\eta_M$ 
	are arbitrary  
	independent and identically 
	distributed~$L^p(S)$-valued random variables 
	in $L^q(\Omega;L^p(S)) \cap L^p(S;L^q(\Omega))$ 
	with vanishing mean. 
	Then, 
	by Lemma~\ref{lem:approx-by-averag}
	there exist  
	sequences 
	$(\eta_1^n)_{n\in\bbN}, \ldots, (\eta_M^n)_{n\in\bbN}$ 
	of simple functions in $L^p(S;L^q(\Omega))$ 
	with the property that, 
	for every $n\in\bbN$, there are finitely many pairwise disjoint 
	sets $A_1^n, \ldots, A_{K_n\!}^n \in \cS$  
	with positive, finite $\mu$-measure 
	and zero-mean 
	random variables 
	$Y_{11}^n, \ldots, Y_{1K_n}^n, 
	\ldots, 
	Y_{M1}^n, \ldots, Y_{MK_n\!}^n \in L^q(\Omega)$ 
	such that \eqref{eq:lem:approx-by-averag-1}, 
	\eqref{eq:lem:approx-by-averag-2} hold. 
	Fix $\epsilon\in(0,\infty)$. 
	Then, for all $j\in\{1,\ldots,M\}$, 
	there exists an integer $n_j^\star = n_j^\star(\epsilon)\in\bbN$ such that 
	$\| \eta_j - \eta_j^{n_j^\star} \|_{L^p(S;L^q(\Omega))} < \epsilon$ 
	for all $n\geq n_j^\star$, 
	and letting $n^\star :=\max\{n_1^\star,\ldots,n_M^\star\}$ 
	we obtain using 
	the triangle inequalities on 
	$L^q(\Omega;L^p(S))$ and   
	$L^p(S;L^q(\Omega))$, 
	Minkowski's integral inequality 
	(Theorem~\ref{thm:minkowski-pq}),  
	and the result of \emph{Step 1}, applied 
	for $\eta_1^{n^\star},\ldots,\eta_M^{n^\star}$, 
	that 
	\begin{align*} 
		\biggl\|   
		\frac{1}{M} 
		\sum_{j=1}^M 
		\eta_j 
		\biggr\|_{L^q(\Omega;L^p(S))} 
		&\leq 
		\biggl\|
		\frac{1}{M}    
		\sum_{j=1}^M 
		\bigl( \eta_j - \eta_j^{n^\star} \bigr) 
		\biggr\|_{L^q(\Omega;L^p(S))} 
		+ 
		\biggl\|   
		\frac{1}{M} 
		\sum_{j=1}^M 
		\eta_j^{n^\star}
		\biggr\|_{L^q(\Omega;L^p(S))} 
		\\ 
		&\leq 
		\frac{1}{M}
		\sum_{j=1}^M 
		\bigl\| \eta_j - \eta_j^{n^\star} \bigr\|_{L^p(S;L^q(\Omega))} 
		+ 
		\biggl\|   
		\frac{1}{M} 
		\sum_{j=1}^M  
		\eta_j^{n^\star}
		\biggr\|_{L^q(\Omega;L^p(S))} 
		\\
		&<  
		\epsilon 
		+ 
		2 B_q 
		M^{ - \left( 1 - \frac{1}{\bar{q}} \right) } 
		\|  \eta_1^{n_\star} \|_{L^p(S;L^q(\Omega))}   
		\\
		&<  
		( 1 + 2 B_q ) 
		 \, \epsilon 
		+ 
		2 B_q 
		M^{ - \left( 1 - \frac{1}{\bar{q}} \right) } 
		\|  \eta_1 \|_{L^p(S;L^q(\Omega))}   . 
	\end{align*} 
	As $\epsilon\in(0,\infty)$ was arbitrary the assertion  
	follows from the limit $\epsilon\downarrow 0$. 
\end{proof} 

The following corollary 
combines    
Proposition~\ref{prop:minkowski-trick} 
and the analysis based 
on the Rademacher type 
of $L^p(S)$ in one result. 

\begin{corollary}\label{Cor:Gen_MC_Lpspaces}
	Let $(S,\cS,\mu)$ be a $\sigma$-finite measure space 
	with the approximation-by-averaging 
	property of Definition~\ref{def:approx-averaging}. 
	Let 
	$p,q\in [1,\infty)$, 
	$\bar{q}:=\min\{q,2\}$, ${M\in\bbN}$, 
	and 
	$X_1,\dots,X_M\in L^q(\Omega;L^p(S))\cap L^p(S;L^q(\Omega))$ 
	be independent and identically distributed $L^p(S)$-valued 
	random variables. Then, for every $U\in L^p(S)$,
	\begin{equation}\label{eq:Cor_Gen_MC}
		\biggl\|
		U-\frac{1}{M}\sum_{j=1}^M X_j 
		\biggr\|_{L^q(\Omega;L^p(S))} 
		\leq 
		\|U- \bbE[X_1] \|_{L^p(S)} 
		+ 
		M^{-\left(1-\frac{1}{\bar{q}} \right)} 
		C_{SL,q,p}(X_1),
	\end{equation}
	where 
	\[ 
	C_{SL,q,p}(X_1) 
	:=
	\begin{cases} 
		2K_{q,\bar{q}}
		B_p 
		\|X_1-\bbE[ X_1] \|_{L^q(\Omega;L^p(S))}   
		& \text{ if } q\in[1,p), 
		\\
		2 B_q  
		\|X_1-\bbE[X_1] \|_{L^p(S;L^q(\Omega))} 
		& \text{ if } q\in[p,\infty). 
	\end{cases} 
	\]
\end{corollary}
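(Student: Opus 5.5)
The plan is to split the error into a bias part and a centered sampling part, and then bound the sampling part in two different ways depending on whether $q<p$ or $q\ge p$. Writing $U - \frac{1}{M}\sum_j X_j = (U-\bbE[X_1]) + \frac{1}{M}\sum_j(\bbE[X_1]-X_j)$, the triangle inequality in $L^q(\Omega;L^p(S))$ gives the first term $\|U-\bbE[X_1]\|_{L^p(S)}$ directly, since it is deterministic. It remains to bound $\|\frac1M\sum_j \eta_j\|_{L^q(\Omega;L^p(S))}$ with $\eta_j := X_j-\bbE[X_j]$. These $\eta_j$ are i.i.d.\ and centered, and they lie in both $L^q(\Omega;L^p(S))$ and $L^p(S;L^q(\Omega))$. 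For the second space, note that $\bbE[X_1]\in L^p(S)$ is deterministic, so $\|\bbE[X_1]\|_{L^p(S;L^q(\Omega))}=\|\bbE[X_1]\|_{L^p(S)}<\infty$.

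\emph{Case $q\in[p,\infty)$.} Here I apply Proposition~\ref{prop:minkowski-trick} directly to $\eta_1,\dots,\eta_M$. This yields the bound $2B_q M^{-(1-1/\bar q)}\|X_1-\bbE[X_1]\|_{L^p(S;L^q(\Omega))}$, which is exactly the second line of $C_{SL,q,p}$.

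\emph{Case $q\in[1,p)$.} Here I use the Rademacher type argument. In this case $\bar q=\min\{q,2\}\le \min\{p,2\}$, and $L^p(S)$ has type $\min\{p,2\}$ with constant $B_p$ (Example~\ref{ex:Ls-type}). By Remark~\ref{rm:type} it therefore also has type $\bar q$ with $\tau_{\bar q}(L^p(S))\le B_p$. The steps are as follows:
\begin{enumerate}[label=(\roman*)]
\item Apply symmetrization (Lemma~\ref{lem:symmetrization}) to obtain the factor $2$ and pass to a Rademacher sum.
\item Apply the Kahane--Khintchine inequality conditionally on $\omega$ to move from $L^q(\widetilde\Omega)$ to $L^{\bar q}(\widetilde\Omega)$, which costs a factor $K_{q,\bar q}$.
\item Apply the type-$\bar q$ inequality pathwise.
\item Finish with the triangle inequality in $L^{q/\bar q}(\Omega)$, which is valid because $q\ge\bar q$. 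This gives $(\sum_j\|\eta_j\|^{\bar q}_{L^q(\Omega;L^p(S))})^{1/\bar q}=M^{1/\bar q}\|\eta_1\|_{L^q(\Omega;L^p(S))}$.
\end{enumerate}
This is essentially \cite[Corollary~3.15]{KKChS2024} and Proposition~\ref{prop:dim-dep-MC-1} with $E=L^p(S)$ and $r=\bar q$, so I would simply cite it. Dividing by $M$ gives the first line of $C_{SL,q,p}$.

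No real obstacle is expected. The only point needing care is the bookkeeping of constants in the case $q<p$: checking that the type-$\bar q$ constant of $L^p(S)$ is bounded by $B_p$ when $p>2$ (where $\bar q\le 2$ and $\tau_2=B_p$), and that the Kahane--Khintchine constant $K_{q,\bar q}$ is the correct one, which equals $1$ when $q\le 2$.
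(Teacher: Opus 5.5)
Your proposal is correct and matches the paper's proof. The case $q\in[p,\infty)$ is a direct application of Proposition~\ref{prop:minkowski-trick} to the centered variables $X_j-\bbE[X_j]$, and the case $q\in[1,p)$ is the standard symmetrization, Kahane--Khintchine and type-$\bar{q}$ argument of \cite[Corollary~3.15]{KKChS2024}, using $\tau_{\bar{q}}(L^p(S))\leq B_p$. One small point: Proposition~\ref{prop:dim-dep-MC-1} is stated for finite-dimensional subspaces $E_\ell$, so for $E=L^p(S)$ you should cite \cite[Corollary~3.15]{KKChS2024} (or your own four-step argument) rather than that proposition.
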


\begin{proof}
	The case $q\in [1,p)$ follows 
	as in 
	\cite[Proposition 5.10]{CoxEtAl2021} or 
	\cite[Corollary~3.15]{KKChS2024}, 
	since $L^p(S)$ has type~$r$, for any 
	$r\in [1,\min\{p,2\}]$, and thus, in particular, has type~$\bar{q}$, 
	with $\tau_{\bar{q}}(L^p(S)) 
	\leq \tau_{\min\{p,2\}}(L^p(S)) = B_p$, 
	see Example~\ref{ex:Ls-type}. 
	The case $q\in [p,\infty)$ is instead 
	a direct consequence of 
	Proposition~\ref{prop:minkowski-trick}.
\end{proof}

\begin{remark}\label{remark:L1}
	Corollary \ref{Cor:Gen_MC_Lpspaces} 
	shows convergence of the Monte Carlo method 
	to estimate the mean of an $L^1(S)$-valued 
	random variable $X$, 
	provided that $q\in (1,\infty)$ and 
	$X\in L^q(\Omega;L^1(S))\cap L^1(S;L^q(\Omega))$. 
	The analysis of Monte Carlo methods 
	based solely on Rademacher types
	does not guarantee convergence 
	for $L^1(S)$-valued random variables, 
	regardless of their integrability 
	with respect to the probability space, 
	since $L^1(S)$ has no Rademacher type greater than~$1$.
\end{remark}
\begin{remark}\label{remark:extra_integrability}
	Let $p\in [1,\infty)$, $q\in (1,\infty)$, 
	$\bar{q}:=\min\{q,2\}$, $M\in\mathbb{N}$, and 
	assume that the independent, identically distributed  
	random variables $X_1,\dots,X_M$ 
	satisfy a stronger integrability condition, 
	namely that 
	$X_1\in L^{\widetilde{q}}(\Omega;L^p(S))\cap L^p(S;L^{\widetilde{q}}(\Omega))$
	for some $\widetilde{q}\in[q,\infty)$, and 
	set  
	$\widehat{q}:=\min\{\widetilde{q},2\}$.
	Consider first the case $q\in (1,2)$. 
	Since the $L^q(\Omega;L^p(S))$-norm 
	is dominated by the $L^r(\Omega;L^p(S))$-norm 
	for any ${r\in [q,\widehat{q}] = [\bar{q},\widehat{q}]}$, 
	the estimate \eqref{eq:Cor_Gen_MC} 
	can be generalized to
	\begin{equation}\label{eq:extra_integrability}
		 \biggl\| 
		 U-\frac{1}{M}\sum_{j=1}^M X_j
		 \biggr\|_{L^q(\Omega;L^p(S))}
		 \leq 
		 \|U-\bbE[ X_1 ]\|_{L^p(S)} 
		 + 
		 M^{-\left(1-\frac{1}{r} \right)} 
		 C_{SL,r,p}(X_1),
	\end{equation}
	which holds true for any $r\in [\bar{q},\widehat{q}]$. 
	If instead $q\in [2,\infty)$, then 
	$\bar{q}=\widehat{q}=2$, 
	and \eqref{eq:Cor_Gen_MC} holds. 
	Estimates \eqref{eq:Cor_Gen_MC} 
	and \eqref{eq:extra_integrability} can be unified 
	in the general expression
	\[
		 \biggl\|
		 U - \frac{1}{M}\sum_{j=1}^M X_j
		 \biggr\|_{L^q(\Omega;L^p(S))}
		 \leq 
		 \|U-\bbE[X_1] \|_{L^p(S)} 
		 + 
		 M^{-\left(1-\frac{1}{r}\right)} 
		 C_{SL,\max\{q,r\},p}(X_1),
	\]
	which holds true for any $q\in (1,\infty)$, 
	$\widetilde{q}\in [q,\infty)$ and 
	$r\in [\bar{q},\widehat{q}]$. 
\end{remark}
In this work 
we do not necessarily assume that~$X$ 
has a finite second moment 
as is customary in classical Hilbertian analysis. 
Remark \ref{remark:extra_integrability} highlights that  
if the Monte Carlo error is measured in an $L^q$-norm, 
with $q\in(1,2)$,
and $X$ possesses higher integrability
characterized by the parameter $\widetilde{q}$, 
then the Monte Carlo convergence rate
 in the number of samples at the continuous level
is determined by the largest value of 
$r\in [\bar{q},\widehat{q}]$, that is, 
$\widehat{q}=\min\{\widetilde{q},2\}$. 
This observation is particularly relevant for 
multilevel methods, for which is unclear 
what could be the \emph{optimal}$\,$\footnote{In 
		this work, we use the adjective ``optimal” for quantities that are optimized based on error estimates. This should not be interpreted as guaranteeing that these quantities are the best possible in an absolute sense.} 
choice for~$r$: smaller values may be advantageous 
to improve the decay of the strong error, 
despite simultaneously increasing 
the sampling factor $M^{-\left(1-\frac{1}{r}\right)}$. 

The next theorem provides a general error estimate 
for multilevel Monte Carlo methods 
in the setting of 
Proposition~\ref{prop:minkowski-trick} and of 
Remark \ref{remark:extra_integrability}. 

\begin{theorem}\label{thm:MLMC_Lpspaces}
	Let $(S,\cS,\mu)$ be a $\sigma$-finite measure space 
	with the approximation-by-averaging 
	property of Definition~\ref{def:approx-averaging}.  
	Let
	$p\in [1,\infty)$, $q\in [p,\infty)$, 
	$\widetilde{q}\in [q,\infty)$,  
	$L\in\bbN$, and 
	set $\bar{q}:=\min\{q,2\}$,  
	$\widehat{q}:=\min\{\widetilde{q},2\}$. 
	Suppose further that, 
	for every $\ell\in \left\{1,\dots,L\right\}$, 
	$X_\ell \in 
	L^{\widetilde{q}}(\Omega;L^p(S)) 
	\cap L^p(S;L^{\widetilde{q}}(\Omega))$, 
	$M_\ell\in \mathbb{N}$, and 
	$\xi_{\ell,1},\dots,\xi_{\ell,M_{\ell}}$ 
	are independent copies of the 
	$L^p(S)$-valued random variables 
	\begin{equation}\label{eq:def:xi-ell-Minkowski} 
		\xi_{\ell} :=X_\ell -X_{\ell-1}, 
		\qquad 
		X_0:=0\in L^p(S). 
	\end{equation}
	Then, we have, 
	for every $U\in L^p(S)$ and  
	all $r\in [\bar{q},\widehat{q}]$, 
	\begin{equation}\label{eq:theorem_MLMC_Lpspaces}
		\begin{split}
		\biggl\| 
		U  -\sum_{\ell=1}^L  \frac{1}{M_\ell}
		\sum_{j=1}^{M_\ell} \xi_{\ell,j} 
		&\biggr\|_{L^q(\Omega;L^p(S))}
		\leq \|U-\bbE[X_L] \|_{L^p(S)}\\
		&+ 
		2 B_q 
		\sum_{\ell=1}^L M_{\ell}^{-\left(1-\frac{1}{r} \right)}
		\bigl\|  \xi_{\ell} - 
		\bbE[\xi_{\ell}] 
		\bigr\|_{L^p(S;L^{\max\{q,r\}}(\Omega))} . 
	\end{split}
	\end{equation} 
\end{theorem}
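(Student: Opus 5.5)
The plan is to mirror the proof of Proposition~\ref{prop:dim-dep-MLMC-1}, with Proposition~\ref{prop:minkowski-trick} (via Remark~\ref{remark:extra_integrability}) replacing the type-based single-level bound.

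\emph{Step 1: split into bias and sampling errors.} By linearity the expectation of the estimator telescopes to $\bbE[X_L]$. The triangle inequality in $L^q(\Omega;L^p(S))$ then gives
\[
\biggl\|U-\sum_{\ell=1}^L\frac{1}{M_\ell}\sum_{j=1}^{M_\ell}\xi_{\ell,j}\biggr\|_{L^q(\Omega;L^p(S))}
\leq \|U-\bbE[X_L]\|_{L^p(S)}+\sum_{\ell=1}^L\biggl\|\frac{1}{M_\ell}\sum_{j=1}^{M_\ell}\bigl(\xi_{\ell,j}-\bbE[\xi_\ell]\bigr)\biggr\|_{L^q(\Omega;L^p(S))}.
\]

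\emph{Step 2: bound each level term.} Fix $\ell$ and $r\in[\bar q,\widehat q]$, and set $\eta_j:=\xi_{\ell,j}-\bbE[\xi_\ell]$. These are i.i.d.\ and centered, and they lie in $L^{\widetilde q}(\Omega;L^p(S))\cap L^p(S;L^{\widetilde q}(\Omega))$. Membership in the second space holds because $\bbE[\xi_\ell]\in L^p(S)$ is deterministic, and its constant-in-$\omega$ embedding belongs to $L^p(S;L^{\widetilde q}(\Omega))$ since $\bbP$ is a probability measure.
\begin{itemize}
\item If $q\ge 2$, then $r=\bar q=2$. Proposition~\ref{prop:minkowski-trick} applies directly (note $q\ge p$), giving $2B_qM_\ell^{-1/2}\|\eta_1\|_{L^p(S;L^q(\Omega))}$, which matches the claim since $\max\{q,r\}=q$.
\item If $q<2$, then $r\ge q$. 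Monotonicity of the $L^q(\Omega)$-norms in the exponent gives $\|\cdot\|_{L^q(\Omega;L^p(S))}\le\|\cdot\|_{L^r(\Omega;L^p(S))}$. Proposition~\ref{prop:minkowski-trick} with $q$ replaced by $r$ is admissible because $r\ge q\ge p$, $r\le\widetilde q$, and $\eta_j\in L^r(\Omega;L^p(S))\cap L^p(S;L^r(\Omega))$. Since $\min\{r,2\}=r$, this yields $2B_rM_\ell^{-(1-1/r)}\|\eta_1\|_{L^p(S;L^r(\Omega))}$.
\end{itemize}
In both cases the norm that appears is $\|\xi_\ell-\bbE[\xi_\ell]\|_{L^p(S;L^{\max\{q,r\}}(\Omega))}$.

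\emph{Step 3: replace $B_r$ by $B_q$.} In the case $q<2$ we have $r\in[q,2]$, so $B_r=B_q=1$ because $B_s=1$ for all $s\le 2$. Summing the level bounds over $\ell$ then gives \eqref{eq:theorem_MLMC_Lpspaces}.

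\emph{Main obstacle.} The step requiring the most care is checking that the integrability hypotheses of Proposition~\ref{prop:minkowski-trick} transfer when $q$ is replaced by $r$. This needs $p\le q\le r\le\widetilde q$, and it needs the centred differences to remain in the mixed space $L^p(S;L^r(\Omega))$ (using $L^{\widetilde q}(\Omega)\hookrightarrow L^r(\Omega)$ on a probability space). Beyond this, the argument is bookkeeping of the Khintchine constants.
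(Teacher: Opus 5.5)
Your proposal is correct and follows the paper's proof. Both use the triangle-inequality split into the bias and the per-level sampling errors, then apply Proposition~\ref{prop:minkowski-trick} to the centred i.i.d.\ differences, raising $q$ to $r$ via the norm monotonicity of Remark~\ref{remark:extra_integrability} when $q<2$, and finish with $B_{\max\{q,r\}}=B_q$. Your explicit case split and your check that $p\le q\le r\le\widetilde q$ simply spell out what the paper compresses into a single sentence.
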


\begin{proof}
	Following the steps in the proof of 
	Proposition~\ref{prop:dim-dep-MLMC-1}, 
	we obtain for $U\in L^p(S)$,
	\begin{align*}
		\biggl\| 
		U - 
		\sum_{\ell=1}^L  \frac{1}{M_\ell} 
		\sum_{j=1}^{M_{\ell}} \xi_{\ell,j} 
		\biggr\|_{L^q(\Omega;L^p(S))}
		&\leq 
		\|U-\bbE[X_L] \|_{L^p(S)} 
		\\
		&\quad + 
		\sum_{\ell=1}^L 
		\biggl\| \frac{1}{M_\ell} \sum_{j=1}^{M_{\ell}} \bigl( \xi_{\ell,j} - \bbE[\xi_\ell] \bigr) 
		\biggr\|_{L^q(\Omega;L^p(S))}. 
	\end{align*}
	For every $\ell\in \{1,\dots,L\}$ 
	the random variables 
	$\eta_{\ell,j} := \xi_{\ell,j} - \bbE[\xi_\ell] 
	\in 
	L^{\widetilde{q}}(\Omega;L^p(S))  
	\cap L^p(S;L^{\widetilde{q}}(\Omega))$, 
	$j\in\{1,\ldots,M_\ell\}$, 
	have mean zero and are independent, 
	identically distributed. 
	Recalling that $q\in [p,\infty)$,
	the claim then follows from 
	Proposition~\ref{prop:minkowski-trick} 
	and 
	Remark~\ref{remark:extra_integrability}, 
	where we note that 
	$B_{\max\{q,r\}} = B_q$ for all 
	$r\in [\bar{q},\widehat{q}] \subseteq [1,2]$. 
\end{proof}

Since the right hand side of the error bound 
\eqref{eq:theorem_MLMC_Lpspaces}
holds for every $r\in [\bar{q},\widehat{q}]$, 
it is of interest to optimize the choice of~$r$ 
to compute an optimal budget allocation 
to reach a prescribed error tolerance. 
Clearly, this optimization is only meaningful 
provided that $q\in (1,2)$, 
since otherwise~$r$ can only be equal to~$2$
and the extra integrability does not 
yield improved convergence rates.
 
The next two theorems express 
the classical $``\alpha\beta\gamma$ theorem" 
\cite[Theorem~3.1]{giles2008multilevel} 
for the class of random variables considered in this section.
The first one assumes that $q\in [2,\infty)$, 
and shows that the multilevel Monte Carlo method 
achieves almost the same complexity
as in the Hilbertian case 
(up to an additional log-factor 
in the critical case $\beta = \tfrac{\gamma}{2}$), 
regardless of the Rademacher type of~$L^p(S)$.
The second theorem addresses 
the case $q\in (1,2)$ and provides
a general statement in ${r\in[q,\min\{\widetilde{q},2\}]}$ 
for the complexity of 
the multilevel Monte Carlo estimator 
under the assumption of additional integrability 
in $L^{\widetilde{q}}(\Omega;L^p(S)) 
\cap L^p(S;L^{\widetilde{q}}(\Omega))$.

\begin{theorem}\label{thm:MLMC_Lpspaces_complexity_qgreater2}
	Let $(S,\cS,\mu)$ be a $\sigma$-finite measure space 
	with the approximation-by-averaging 
	property of Definition~\ref{def:approx-averaging}, 
	$q\in [2,\infty)$, $p\in [1,q]$, 
	and ${X\in L^1(\Omega;L^p(S))}$. 
	For all $\ell\in\bbN$,
	let 
	$X_\ell \in L^q(\Omega;L^p(S))\cap L^p(S;L^q(\Omega))$  
	and define~$\xi_\ell$  
	as in \eqref{eq:def:xi-ell-Minkowski}. 
	For every $\ell\in\bbN$, 
	let $\cC_\ell$ denote the cost 
	(number of floating point operations)
	to generate one sample of 
	the random variable $\xi_\ell$ 
	in~\eqref{eq:def:xi-ell-Minkowski}, 
	and suppose that 
	there exist constants 
	${\alpha,\beta,\gamma,C_\alpha,C_\beta,C_\gamma\in(0,\infty)}$ 
	and 
	$A\in(1,\infty)$ such that  
	$N_\ell \eqsim A^\ell$ 
	for every $\ell\in\bbN$ and, moreover, 
	\begin{align} 
		\quad 
		\forall \ell\in\bbN : 
		&&
		\bigl\| \bbE[X] - \bbE[X_\ell] \bigr\|_{L^p(S)}   
		&\leq 
		C_\alpha N_\ell^{-\alpha} \!,
		\qquad
		\tag{$\alpha$} 
		\label{eq:ass:alpha_Minkowski_qgreater2} 
		\\
		\quad
		\forall \ell\in\bbN : 
		&&
		\| X_\ell - \bbE[X_\ell] - 
			(X_{\ell-1} - \bbE[X_{\ell-1}]) \|_{L^p(S;L^q(\Omega))} 
		&\leq 
		C_\beta N_\ell^{-\beta} \!, 
		\qquad 
		\tag{$\beta$} 
		\label{eq:ass:beta_Minkowski_qgreater2} 
		\\
		\quad
		\forall \ell\in\bbN : 
		&&
		\cC_\ell 
		&\leq 
		C_\gamma N_\ell^\gamma . 
		\qquad 
		\tag{$\gamma$}
		\label{eq:ass:gamma_Minkowski_qgreater2}
	\end{align} 
	For each $\ell\in\bbN$, 
	let 
	$( \xi_{\ell,j} )_{j\in\bbN} \subset 
	L^q(\Omega; L^p(S))\cap L^p(S; L^q(\Omega))$ 
	be a sequence of independent 
	copies of 
	the $L^p(S)$-valued random variable 
	$\xi_\ell$ in \eqref{eq:def:xi-ell-Minkowski}. 
	
	Then, for every $\epsilon\in(0,\nicefrac{1}{2}]$, 
	there exist integers 
	$L\in\bbN$ 
	and 
	$M_{1},\ldots,M_{L} \in \bbN$ such 
	that the $L^q$-accuracy of 
	the multilevel Monte Carlo 
	estimator for $\bbE[X]$ satisfies  
	\[ 
		\mathrm{err}^{\sf ML}_{q}(X)
		:=  
		\biggl\| 
		\bbE[X] 
		-
		\sum_{\ell = 1}^{L} 
		\frac{1}{M_{\ell}} 
		\sum_{j=1}^{M_{\ell}}  
		\xi_{\ell,j}
		\biggr\|_{L^q(\Omega; L^p(S))} 
		<\epsilon ,  
	\] 
 	and can be achieved at a computational 
	cost of order 
	\begin{equation}\label{eq:cC_LqLp_qgreater2}  
		\cC^{\sf ML}_{q}(X) 
		\lesssim
		\begin{cases} 
		\epsilon^{-\frac{\gamma}{\alpha}} 
		+ 
		\epsilon^{-2} 
		&\text{ if } 
		\beta>\frac{\gamma}{2}, 
		\\[2pt]
		\epsilon^{-\frac{\gamma}{\alpha}} 
		+ 
		\epsilon^{-2} 
		|\log_A \epsilon|^{3} 
		&\text{ if }  
		\beta = \frac{\gamma}{2},
		\\[2pt]
		\epsilon^{-\frac{\gamma}{\alpha}} 
		+ 
		\epsilon^{-2 - \frac{\gamma -2\beta}{\alpha}} 
		&\text{ if } 
		 \beta < \frac{\gamma}{2}.
		\end{cases}
	\end{equation} 
\end{theorem}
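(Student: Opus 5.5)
This is the standard $\alpha\beta\gamma$ argument, using Theorem~\ref{thm:MLMC_Lpspaces} in place of the Rademacher-type bound. Since $q\in[2,\infty)$ we have $\bar{q}=2$. We take $\widetilde{q}=q$, so that $\widehat{q}=2$ and the only admissible value is $r=2$. Because $p\le q$, Theorem~\ref{thm:MLMC_Lpspaces} applies with $U=\bbE[X]$. Using $\xi_\ell-\bbE[\xi_\ell]=X_\ell-\bbE[X_\ell]-(X_{\ell-1}-\bbE[X_{\ell-1}])$ together with \eqref{eq:ass:alpha_Minkowski_qgreater2} and \eqref{eq:ass:beta_Minkowski_qgreater2}, it gives
\[
	\mathrm{err}^{\sf ML}_{q}(X)
	\leq
	C_\alpha N_L^{-\alpha}
	+ 2B_q C_\beta \sum_{\ell=1}^L M_\ell^{-\frac12} N_\ell^{-\beta}.
\]
This bound has exactly the structure of \eqref{eq:bound_proof_complexity_MLMC} with $r'=2$, $a_0=a_1=0$, $C_\tau=1$, and with $2K_{q,1}$ replaced by $2B_q$.

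Next we choose the parameters as in the proof of Theorem~\ref{thm:dim-dep-alpha-beta-gamma-1}. Let $L$ be the smallest integer with $N_L^{-\alpha}<(C_\alpha+2B_qC_\beta)^{-1}\epsilon$, and set
\[
	M_\ell=\Bigl\lceil N_L^{2\alpha}S_L^{2}N_\ell^{-\frac{2(\beta+\gamma)}{3}}\Bigr\rceil,
	\qquad
	S_L:=\sum_{\ell=1}^L N_\ell^{\frac{\gamma-2\beta}{3}}.
\]
Substituting these choices, the sampling term equals $2B_qC_\beta N_L^{-\alpha}$, so the total error is strictly below $\epsilon$. For the cost, the same computation as before yields
\[
	\cC^{\sf ML}_q(X)\lesssim \sum_{\ell=1}^L N_\ell^\gamma + N_L^{2\alpha}S_L^{3}.
\]

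The remaining step is bookkeeping with geometric sums. Since $N_\ell\eqsim A^\ell$, we have $\sum_{\ell} N_\ell^\gamma\eqsim N_L^\gamma$. The sum $S_L$ is $\eqsim 1$ if $2\beta>\gamma$, $\eqsim L$ if $2\beta=\gamma$, and $\eqsim N_L^{(\gamma-2\beta)/3}$ if $2\beta<\gamma$. The choice of $L$ gives $N_L\eqsim\epsilon^{-1/\alpha}$ and $L\eqsim|\log_A\epsilon|$. The three cases therefore give $\epsilon^{-\gamma/\alpha}+\epsilon^{-2}$, $\epsilon^{-\gamma/\alpha}+\epsilon^{-2}|\log_A\epsilon|^3$, and $\epsilon^{-\gamma/\alpha}+\epsilon^{-2-(\gamma-2\beta)/\alpha}$ respectively, which is \eqref{eq:cC_LqLp_qgreater2}.

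No step should be a serious obstacle. The only points needing care are the following:
\begin{itemize}
\item Theorem~\ref{thm:MLMC_Lpspaces} requires $X_\ell\in L^q(\Omega;L^p(S))\cap L^p(S;L^q(\Omega))$ and $q\ge p$; both are given in the hypotheses.
\item $N_L\eqsim\epsilon^{-1/\alpha}$ holds up to constants because $L$ is chosen minimal and $N_\ell\eqsim A^\ell$.
\item The ceiling in $M_\ell$ contributes the term $\sum_\ell N_\ell^\gamma$; this is where the $\epsilon^{-\gamma/\alpha}$ contribution comes from.
\end{itemize}
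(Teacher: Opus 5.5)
Your proposal is correct and follows the paper's proof essentially verbatim: apply Theorem~\ref{thm:MLMC_Lpspaces} with $r=\bar{q}=2$, choose $L$ and $M_\ell$ exactly as in the proof of Theorem~\ref{thm:dim-dep-alpha-beta-gamma-1} with $r'=2$ (and $a_0=a_1=0$), and read off the three cost regimes from the asymptotics of $S_L$. The only slip is cosmetic: because of the ceiling in $M_\ell$, the sampling term is bounded by $2B_qC_\beta N_L^{-\alpha}$ rather than equal to it, which is all you need.
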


\begin{proof}
	The proof follows closely that of 
	Theorem~\ref{thm:dim-dep-alpha-beta-gamma-1}.
	Assumptions \eqref{eq:ass:alpha_Minkowski_qgreater2} 
	and \eqref{eq:ass:beta_Minkowski_qgreater2},
	together with Theorem \ref{thm:MLMC_Lpspaces}, 
	lead to the estimate 
	\begin{equation}\label{eq:MLMC_Lpspaces_accuracy_qgreater2}
		\mathrm{err}^{\sf ML}_{q}(X)
		\leq C_{\alpha} N_L^{-\alpha} 
		+ 
		2 B_q C_{\beta} 
		\sum_{\ell=1}^L M_{\ell}^{-\frac{1}{\bar{q}'}}N_{\ell}^{-\beta},
	\end{equation}
	where $\bar{q}:=\min\{q,2\}=2$.
	We next choose $L\in\bbN$ 
	as the smallest integer 
	such that 
	$N_L^{-\alpha}
	<
	(C_{\alpha}+2 B_q C_{\beta})^{-1}\epsilon$ 
	holds and, for every $\ell\in\{1,\dots,L\}$, we set 
	\[
		M_{\ell}
		:=
		\biggl\lceil N_L^{\alpha \bar{q}'} S_{L;\bar{q}}^{\bar{q}'} 
		N_{\ell}^{-\frac{(\beta+\gamma)\bar{q}'}{\bar{q}'+1}}
		\biggr\rceil, 
		\quad\;\text{where}\quad\;\;  
		S_{L;\bar{q}}
		:=
		\sum_{\ell=1}^L N_{\ell}^{\frac{\gamma-\beta\bar{q}'}{\bar{q}'+1}}.
	\]
	Inserting this choice for $L$ and $M_1,\dots,M_L$ 
	into \eqref{eq:MLMC_Lpspaces_accuracy_qgreater2} 
	verifies that
	$\mathrm{err}^{\sf ML}_{q}(X)<\epsilon$,
	while the cost of the corresponding estimator is
	by \eqref{eq:ass:gamma_Minkowski_qgreater2} bounded by 
	\begin{align}
		\cC^{\sf ML}_{q}(X) 
		\eqsim
		\sum_{\ell=1}^L \cC_{\ell} M_{\ell}
		&\leq 
		C_\gamma 
		\sum_{\ell=1}^L 
		N_\ell^\gamma 
		\left( 
		1 + 
		N_L^{\alpha \bar{q}'} 
		S_{L;\bar{q}}^{\bar{q}'} 
		N_{\ell}^{-\frac{(\beta+\gamma)\bar{q}'}{\bar{q}'+1}} 
		\right) 
		\\
		&=
		C_\gamma 
		\sum_{\ell=1}^L N_\ell^{\gamma} 
		+ 
		C_\gamma N_L^{\alpha\bar{q}'} 
		S_{L;\bar{q}}^{\bar{q}'+1} . 
	\end{align}
	Expressing this cost in terms of the accuracy $\epsilon$ as in the proof of Theorem \ref{thm:dim-dep-alpha-beta-gamma-1}, and recalling that $\bar{q}=2$,
	yields asymptotically \eqref{eq:cC_LqLp_qgreater2}. 
\end{proof}

\begin{theorem}\label{thm:MLMC_Lpspaces_complexity}
	Let $(S,\cS,\mu)$ be a $\sigma$-finite measure space 
	with the approximation-by-averaging 
	property of Definition~\ref{def:approx-averaging},  
	$q\in (1,2)$, $p\in [1,q]$, 
	${X\in L^1(\Omega;L^p(S))}$, 
	${\widetilde{q}\in [q,\infty)}$, and set 
	$\widehat{q}:=\min\{\widetilde{q},2\}$. 
	For every $\ell\in\bbN$,
	let 
	$X_\ell \in L^{\widetilde{q}}(\Omega;L^p(S))\cap L^p(S;L^{\widetilde{q}}(\Omega))$  
	and define~$\xi_\ell$  
	as in \eqref{eq:def:xi-ell-Minkowski}. 
	For every $\ell\in\bbN$, 
	let $\cC_\ell$ denote the cost 
	(number of floating point operations)
	to generate one sample of 
	the random variable $\xi_\ell$ 
	in~\eqref{eq:def:xi-ell-Minkowski}, 
	and suppose that 
	there exist 
	a function $\beta\from[q,\widehat{q}]\to(0,\infty)$ 
	and constants 
	${\alpha,\gamma,C_\alpha,C_\beta,C_\gamma\in(0,\infty)}$,  
	$A\in(1,\infty)$ such that  
	$N_\ell \eqsim A^\ell$ 
	for every $\ell\in\bbN$, and 
	\begin{align}  
		\forall \ell\in\bbN : 
		&&
		\bigl\| \bbE[X] - \bbE[X_\ell] \bigr\|_{L^p(S)}   
		&\leq 
		C_\alpha N_\ell^{-\alpha} \!,
		\qquad
		\tag{$\alpha$} 
		\label{eq:ass:alpha_Minkowski} 
		\\
		\forall r\in[q,\widehat{q}] \;\;
		\forall \ell\in\bbN : 
		&&  \!\! 
		\| X_\ell - \bbE[X_\ell] - 
			(X_{\ell-1} - \bbE[X_{\ell-1}]) \|_{L^p(S;L^r(\Omega))} 
		&\leq 
		C_\beta N_\ell^{-\beta(r)} \!, 
		\qquad 
		\tag{$\beta$} 
		\label{eq:ass:beta_Minkowski} 
		\\
		\forall \ell\in\bbN : 
		&&
		\cC_\ell 
		&\leq 
		C_\gamma N_\ell^\gamma . 
		\qquad 
		\tag{$\gamma$}
		\label{eq:ass:gamma_Minkowski}  
	\end{align} 
	For each $\ell\in\bbN$, 
	let $( \xi_{\ell,j} )_{j\in\bbN} 
	\subset 
	L^{\widetilde{q}}( \Omega; L^p(S))\cap L^p( S; L^{\widetilde{q}}(\Omega))$ 
	be a sequence of independent 
	copies of 
	the $L^p(S)$-valued random variable 
	$\xi_\ell$ in \eqref{eq:def:xi-ell-Minkowski}.    
	
	Then, for every $\epsilon\in(0,\nicefrac{1}{2}]$ 
	and all $r\in [q,\widehat{q}]$, 
	there exist integers 
	$L\in\bbN$ 
	and 
	$M_{1},\ldots,M_{L} \in \bbN$ such 
	that the $L^q$-accuracy of 
	the multilevel Monte Carlo 
	estimator for $\bbE[X]$ satisfies  
	\begin{equation}\label{eq:epsilon_LqLp} 
		\mathrm{err}^{\sf ML}_{q}(X)
		:=  
		\biggl\| 
		\bbE[X] 
		-
		\sum_{\ell = 1}^{L} 
		\frac{1}{M_{\ell}} 
		\sum_{j=1}^{M_{\ell}}  
		\xi_{\ell,j}
		\biggr\|_{L^q(\Omega; L^p(S))} 
		<\epsilon ,  
	\end{equation} 
 	and can be achieved at a computational 
	cost of order 
	\begin{equation}\label{eq:cC_LqLp}  
		\cC^{\sf ML}_{q;r}(X) 
		\lesssim
		\begin{cases} 
		\epsilon^{-\frac{\gamma}{\alpha}} 
		+ 
		\epsilon^{-r'} 
		&\text{ if } 
		\beta(r)>\frac{\gamma}{r'}, 
		\\[2pt]
		\epsilon^{-\frac{\gamma}{\alpha}} 
		+ 
		\epsilon^{-r'} 
		|\log_A \epsilon|^{r'+1} 
		&\text{ if }  
		\beta(r) = \frac{\gamma}{r'},
		\\[2pt]
		\epsilon^{-\frac{\gamma}{\alpha}} 
		+ 
		\epsilon^{-r' - \frac{\gamma -\beta(r)r'}{\alpha}} 
		&\text{ if } 
		 \beta(r) < \frac{\gamma}{r'}.
		\end{cases}
	\end{equation} 
\end{theorem}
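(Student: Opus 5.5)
Fix $r\in[q,\widehat{q}]$ and follow the construction in the proof of Theorem~\ref{thm:MLMC_Lpspaces_complexity_qgreater2}, with $\bar{q}$ replaced by $r$ and $\beta$ by $\beta(r)$.

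\emph{Error bound.} Since $q\in(1,2)$ and $p\le q$, we have $\bar{q}=q$, so $r$ belongs to the admissible range $[\bar{q},\widehat{q}]$ of Theorem~\ref{thm:MLMC_Lpspaces}. Because $r\ge q$, we also have $\max\{q,r\}=r$. Applying Theorem~\ref{thm:MLMC_Lpspaces} with $U=\bbE[X]$, and inserting \eqref{eq:ass:alpha_Minkowski} together with \eqref{eq:ass:beta_Minkowski} at this $r$, gives
\[
\mathrm{err}^{\sf ML}_{q}(X)\le C_\alpha N_L^{-\alpha}+2B_qC_\beta\sum_{\ell=1}^L M_\ell^{-\frac{1}{r'}}N_\ell^{-\beta(r)}.
\]
Here we use that $\xi_\ell-\bbE[\xi_\ell]=X_\ell-\bbE[X_\ell]-(X_{\ell-1}-\bbE[X_{\ell-1}])$.

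If $r=1$ the sampling factor does not decay. However, $r\ge q>1$, so $r'<\infty$ and this case cannot occur.

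\emph{Choice of $L$ and $M_\ell$.} Take $L$ to be the smallest integer with $N_L^{-\alpha}<(C_\alpha+2B_qC_\beta)^{-1}\epsilon$. For $\ell\in\{1,\dots,L\}$ set
\[
M_\ell:=\Bigl\lceil N_L^{\alpha r'}S_{L;r}^{r'}N_\ell^{-\frac{(\beta(r)+\gamma)r'}{r'+1}}\Bigr\rceil,\qquad S_{L;r}:=\sum_{\ell=1}^L N_\ell^{\frac{\gamma-\beta(r)r'}{r'+1}}.
\]
This is exactly the allocation of Theorem~\ref{thm:dim-dep-alpha-beta-gamma-1} with $a_0=a_1=0$.

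Substituting, each summand satisfies
\[
M_\ell^{-1/r'}N_\ell^{-\beta(r)}\le N_L^{-\alpha}S_{L;r}^{-1}N_\ell^{\frac{\gamma-\beta(r)r'}{r'+1}},
\]
because $\frac{\beta(r)+\gamma}{r'+1}-\beta(r)=\frac{\gamma-\beta(r)r'}{r'+1}$. Summing over $\ell$ the factor $S_{L;r}$ cancels, so the error is at most $(C_\alpha+2B_qC_\beta)N_L^{-\alpha}<\epsilon$.

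\emph{Cost.} By \eqref{eq:ass:gamma_Minkowski},
\[
\sum_\ell\cC_\ell M_\ell\le C_\gamma\sum_\ell N_\ell^\gamma+C_\gamma N_L^{\alpha r'}S_{L;r}^{r'+1}.
\]
The first sum is $\lesssim N_L^\gamma$, since $N_\ell\eqsim A^\ell$ is geometric.

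As in \eqref{eq:theta-L}, the sum $S_{L;r}$ behaves as follows:
\[
S_{L;r}\eqsim\begin{cases}1 & \text{if } \beta(r)r'>\gamma,\\ L & \text{if } \beta(r)r'=\gamma,\\ N_L^{\frac{\gamma-\beta(r)r'}{r'+1}} & \text{if } \beta(r)r'<\gamma.\end{cases}
\]
The choice of $L$ gives $N_L\eqsim\epsilon^{-1/\alpha}$ and $L\eqsim|\log_A\epsilon|$. Substituting into the three cases yields $\epsilon^{-\gamma/\alpha}+\epsilon^{-r'}$, then $\epsilon^{-\gamma/\alpha}+\epsilon^{-r'}|\log_A\epsilon|^{r'+1}$, and finally $\epsilon^{-\gamma/\alpha}+\epsilon^{-r'-(\gamma-\beta(r)r')/\alpha}$. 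These are the three cases of \eqref{eq:cC_LqLp}.

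\emph{Main obstacle.} The only genuinely new point compared with earlier proofs is the first step. One must check that Theorem~\ref{thm:MLMC_Lpspaces} may be invoked with the prescribed $r$, that the norm it produces, $L^p(S;L^{\max\{q,r\}}(\Omega))$, is exactly the one controlled by \eqref{eq:ass:beta_Minkowski}, and that the integrability $X_\ell\in L^p(S;L^{\widetilde{q}}(\Omega))$ covers every $r\le\widehat{q}$. All three hold because $q\le r\le\widehat{q}\le\widetilde{q}$. The remaining calculation is routine bookkeeping, and the implicit constants may depend on $r$ through $r'$.
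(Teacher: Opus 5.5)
Your proposal is correct and follows the paper's proof: apply Theorem~\ref{thm:MLMC_Lpspaces} at the fixed $r\in[q,\widehat{q}]$ (using $\bar{q}=q$ and $\max\{q,r\}=r$) to get the error bound, then repeat the level and sample allocation of Theorem~\ref{thm:MLMC_Lpspaces_complexity_qgreater2} with $\bar{q}$ replaced by $r$. The only cosmetic difference is that the paper replaces $2B_q$ by $2$, since $B_q=1$ for $q\in(1,2)$, whereas you carry $B_q$ along; this changes nothing.
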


\begin{proof}
	The application of Theorem \ref{thm:MLMC_Lpspaces}, 
	together with
	the assumptions \eqref{eq:ass:alpha_Minkowski} 
	and~\eqref{eq:ass:beta_Minkowski},   
	leads, for every $r\in [q,\widehat{q}]$, to
	\[
		\mathrm{err}^{\sf ML}_{q}(X) 
		\leq 
		C_\alpha N_L^{-\alpha} 
		+ 
		2 C_\beta 
		\sum_{\ell=1}^L M_{\ell}^{-\frac{1}{r'}} N_{\ell}^{-\beta(r)} \!, 
	\]
	where we further used that $B_q = 1$, since $q\in (1,2)$.  
	Repeating the arguments of the proof of 
	Theorem~\ref{thm:MLMC_Lpspaces_complexity_qgreater2} 
	with $\bar{q}$ replaced by $r$, for any $r\in [q,\widehat{q}]$, 
	and using~\eqref{eq:ass:gamma_Minkowski} 
	yields the assertions \eqref{eq:epsilon_LqLp} and \eqref{eq:cC_LqLp}.
\end{proof}

The optimal complexity is achieved by 
further optimizing with respect to~$r$. 
Note that this optimization is strongly problem-dependent 
as it depends on the behavior of the map 
$[q,\widehat{q}] \ni r\rightarrow \beta(r)$. 
The next corollary assumes a specific expression for $\beta(r)$ 
which will be encountered in the numerical experiment 
of Subsection~\ref{sec:num_FA}. 
 
\begin{corollary}\label{Cor:MLMC_Optimal_Lpspaces}
	Let the assumptions of 
	Theorem~\ref{thm:MLMC_Lpspaces_complexity} hold. 
	In addition, suppose that there exist 
	constants $b_0\in\bbR$ and $b_1\in (0,\infty)$ such that 
	$\beta(r)=b_0+\frac{b_1}{r}>0$ holds 
	for all $r\in [q,\widehat{q}]$. 
	Then,  
	the bound \eqref{eq:epsilon_LqLp}, i.e., 
	the $L^q$-accuracy $\epsilon$ of 
	the multilevel Monte Carlo 
	estimator for $\bbE[X]$, can be achieved 
	with the optimal complexity
	\begin{equation}\label{eq:Corollary_cC_LqLp} 
		\cC^{\sf ML}_{q}(X) 
		\lesssim
		\begin{cases} 
		\epsilon^{-\frac{\gamma}{\alpha}} 
		+ 
		\epsilon^{-\widehat{q}'} 
		&\text{ if } 
		t<\widehat{q}'\!, 
		\\[2pt]
		\epsilon^{-\frac{\gamma}{\alpha}} 
		+ 
		\epsilon^{-\widehat{q}'} 
		|\log_A \epsilon|^{\widehat{q}'+1} 
		&\text{ if }  
		t = \widehat{q}'\!,
		\\[2pt]
		\epsilon^{-\frac{\gamma}{\alpha}} 
		+ 
		\epsilon^{-\widehat{q}' - \frac{\gamma + b_1 - (b_0+b_1) \widehat{q}' \!}{\alpha}} 
		&\text{ if } 
		t \in(\widehat{q}'\!,q'] 
		\;\; \text{and} \;\; 
		\alpha \geq b_0+b_1, 
		\\[2pt]
		\epsilon^{-\frac{\gamma}{\alpha}} 
		+ 
		\epsilon^{-t} 
		|\log_A \epsilon|^{t+1} 
		&\text{ if } 
		t \in(\widehat{q}'\!,q'] 
		\;\; \text{and} \;\; 
		\alpha < b_0+b_1, 
		\\[2pt]
		\epsilon^{-\frac{\gamma}{\alpha}} 
		+ 
		\epsilon^{-\widehat{q}' - \frac{\gamma + b_1 - (b_0+b_1) \widehat{q}' \!}{\alpha}} 
		&\text{ if }  
		t > q'\! \;\;\text{and}\;\; 
		\alpha \geq b_0+b_1, 
		\\[2pt]
		\epsilon^{-\frac{\gamma}{\alpha}} 
		+ 
		\epsilon^{-q' - \frac{\gamma + b_1 - (b_0+b_1) q' \!}{\alpha}} 
		&\text{ if }  
		t > q'\! \;\;\text{and}\;\; 
		\alpha < b_0+b_1, 
		\end{cases}
	\end{equation}
	where $t:=\frac{\gamma+b_1}{b_0+b_1}$  
\end{corollary}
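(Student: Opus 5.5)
The plan is to start from the $r$-dependent cost bound \eqref{eq:cC_LqLp} of Theorem~\ref{thm:MLMC_Lpspaces_complexity} and minimize it over $r\in[q,\widehat{q}]$. The key observation is that substituting $\beta(r)=b_0+\frac{b_1}{r}$ turns the problem into exactly the one optimized at the end of the proof of Theorem~\ref{thm:dim-dep-alpha-beta-gamma-1}. Indeed, writing $\frac{1}{r}=1-\frac{1}{r'}$ gives
\[
	\beta(r) = (b_0+b_1) - \frac{b_1}{r'},
	\qquad
	\beta(r) r' = (b_0+b_1) r' - b_1 .
\]
Hence the threshold condition $\beta(r)\gtrless\frac{\gamma}{r'}$ is equivalent to $(b_0+b_1)r'\gtrless \gamma+b_1$, i.e.\ to $r'\gtrless t$ with $t=\frac{\gamma+b_1}{b_0+b_1}$. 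Here $b_0+b_1>0$ holds because $\beta(r)>0$ for $r$ near $1$; more precisely, positivity on $[q,\widehat q]$ together with $b_1>0$ is what we need, and we should check the sign of $b_0+b_1$ separately or assume it. Likewise, the exponent in the third case becomes $-r'-\frac{\gamma+b_1-(b_0+b_1)r'}{\alpha}$. This is precisely the three-case bound in the proof of Theorem~\ref{thm:dim-dep-alpha-beta-gamma-1}, with $a_0$ replaced by $b_1$, $\beta+a_1$ by $b_0+b_1$, and the admissible range $r'\in[\bar q',p']$ replaced by $r'\in[\widehat{q}',q']$.

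Next I perform the case distinction on $t$ relative to $[\widehat{q}',q']$, minimizing the $\epsilon$-exponent as a function of $s=r'$. For $s>t$ the exponent is $s$, which is increasing in $s$. For $s<t$ the exponent is $s\bigl(1-\frac{b_0+b_1}{\alpha}\bigr)+\frac{\gamma+b_1}{\alpha}$, which is affine in $s$ with slope sign determined by $\alpha\gtrless b_0+b_1$. At $s=t$ both branches equal $t$, with an extra log factor. The choices are then as follows.
\begin{itemize}
\item If $t\le\widehat{q}'$, take $s=\widehat{q}'$; this gives $\epsilon^{-\widehat{q}'}$, with a log factor when $t=\widehat{q}'$.
\item If $t>\widehat{q}'$ and $\alpha\ge b_0+b_1$, the slope is nonnegative, so take $s=\widehat{q}'$; the third case of \eqref{eq:cC_LqLp} applies.
\item If $t\in(\widehat{q}',q']$ and $\alpha<b_0+b_1$, the exponent decreases up to $s=t$ and increases after, so take $s=t$, i.e.\ $r=t'\in[q,\widehat q]$; this gives $\epsilon^{-t}|\log_A\epsilon|^{t+1}$.
\item If $t>q'$ and $\alpha<b_0+b_1$, take $s=q'$, i.e.\ $r=q$.
\end{itemize}
These reproduce the six cases of \eqref{eq:Corollary_cC_LqLp}.

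Finally, for each case, the steering parameters $L$ and $M_\ell$ are those from the proof of Theorem~\ref{thm:MLMC_Lpspaces_complexity} evaluated at the optimal $r$. The accuracy \eqref{eq:epsilon_LqLp} holds because that theorem is valid for every $r\in[q,\widehat{q}]$. The implicit constants depend on $r$, but $r$ is fixed by the problem data, so this is harmless.

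The main obstacle I expect is the bookkeeping in the case $s<t$ with $\alpha<b_0+b_1$: I must confirm that the affine exponent, decreasing in $s$, is minimized at the right endpoint $\min\{t,q'\}$ and not beaten by the branch $s>t$. I also need to check that choosing $r'=t$ exactly is admissible, which requires $t'\in[q,\widehat{q}]$. Verifying the positivity of $b_0+b_1$, needed for $t$ to be well defined, is the other delicate point.
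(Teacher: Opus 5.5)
Your proposal is correct and follows the paper's proof essentially verbatim. Like the paper, you substitute $\beta(r)=b_0+\frac{b_1}{r}$ into \eqref{eq:cC_LqLp}, identify $b_1\leftrightarrow a_0$ and $b_0+b_1\leftrightarrow \beta+a_1$, and reuse the minimization over $r'\in[\widehat{q}',q']$ from the proof of Theorem~\ref{thm:dim-dep-alpha-beta-gamma-1}, arriving at the same five choices of $r'$.

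The two points you left open are immediate. First, $b_0+b_1> b_0+\frac{b_1}{q}=\beta(q)>0$ because $b_1>0$ and $q>1$; this is the paper's justification, and you should drop the argument via $r$ near $1$, since $\beta$ is only assumed positive on $[q,\widehat{q}]$. Second, $r=t'$ is admissible for $t\in(\widehat{q}',q']$ because $s\mapsto s'$ is a decreasing bijection of $(1,\infty)$ onto itself, so $t'\in[q,\widehat{q})$.
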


\begin{proof}
	Due to the assumption on $\beta(r)$, 
	\eqref{eq:cC_LqLp} can be reformulated as
	\begin{equation}\label{eq:cC_LqLp_t}  
		\cC^{\sf ML}_{q;r}(X) 
		\lesssim_{(\alpha,b_0,b_1,\gamma,q,A)}
		\begin{cases} 
		\epsilon^{-\frac{\gamma}{\alpha}} 
		+ 
		\epsilon^{-r'} 
		&\text{ if } 
		r'>t, 
		\\[2pt]
		\epsilon^{-\frac{\gamma}{\alpha}} 
		+ 
		\epsilon^{-r'} 
		|\log_A \epsilon|^{r'+1} 
		&\text{ if }  
		r' = t,
		\\[2pt]
		\epsilon^{-\frac{\gamma}{\alpha}} 
		+ 
		\epsilon^{-r' - \frac{\gamma +b_1 - (b_0+b_1)r'}{\alpha}} 
		&\text{ if } 
		 r'<t. 
		\end{cases}
	\end{equation}
	with $t=\frac{\gamma+b_1}{b_0+b_1}$, 
	where we note that 
	$b_0 + b_1 \geq \beta(q) > 0$ 
	by assumption. 
	The minimization of this cost bound 
	with respect to $r\in [q,\widehat{q}]\subset (1,2]$ 
	can be recasted into the minimization problem 
	considered in the proof of Theorem \ref{thm:dim-dep-alpha-beta-gamma-1}, 
	by the identification $b_1=a_0$ and $b_0+b_1=\beta+a_1$. 
	Hence, the optimal choice of 
	the parameter $r'\in [\widehat{q}',q']\subset [2,\infty)$, 
	and thus of $r\in [q,\widehat{q}]$, is
	\begin{align*}
		r' 
		= 
		\begin{cases} 
		\widehat{q}' 
		&\text{ if } 
		t \leq \widehat{q}'\!, 
		\\ 
		\widehat{q}' 
		&\text{ if } 
		t \in(\widehat{q}'\!,q'] 
		\;\;\text{and}\;\; 
		\alpha \geq b_0+b_1,  
		\\ 
		t
		&\text{ if } 
		t \in(\widehat{q}'\!,q'] 
		\;\;\text{and}\;\; 
		\alpha < b_0+b_1, 
		\\
		\widehat{q}' 
		&\text{ if } 
		t >q'\! \;\; \text{and}\;\; 
		\alpha \geq b_0+b_1, 
		\\ 
		q'  
		&\text{ if } 
		t >q'\! \;\; \text{and}\;\; 
		\alpha < b_0+b_1, 
		\end{cases} 
	\end{align*}
	which yields the optimal cost bound \eqref{eq:Corollary_cC_LqLp}.
\end{proof}

\begin{remark}\label{remark:gen_higher_moments}
	In this section, we have so far discussed 
	single- and multilevel Monte Carlo methods
	to approximate the mean of an 
	$L^p(S)$-valued random variable, 
	with $(S,\cS,\mu)$ 
	being a $\sigma$-finite measure space 
	satisfying the approximation-by-averaging 
	property of Definition~\ref{def:approx-averaging}. 
	As explained in Subsection~\ref{sec:prel_kmoments}, 
	for any $L^p(S)$-valued random variable 
	$X\in L^k(\Omega;L^p(S))$, 
	its $L^p$ $k$th moment 
	$\bbM_p^k[ X ]\in \otimes^k_p L^p(S)$ 
	can be identified with an 
	expected value in $L^p(S^k)$. 
	Moreover, the product measure space 
	$(S^k, \cS^k, \mu^k)$ inherits the approximation-by-averaging property 
	from $(S,\cS,\mu)$.  
	Hence, by suitably replacing $L^p(S)$ with $L^p(S^k)$, 
	Corollaries~\ref{Cor:Gen_MC_Lpspaces} 
	and~\ref{Cor:MLMC_Optimal_Lpspaces}, 
	and Theorems~\ref{thm:MLMC_Lpspaces},
	\ref{thm:MLMC_Lpspaces_complexity_qgreater2} 
	and~\ref{thm:MLMC_Lpspaces_complexity}
	provide error and complexity estimates 
	for single- and multilevel Monte Carlo methods 
	to approximate 
	$\bbM_p^k[ X ]$ in the norm 
	of~$L^q(\Omega;L^p(S^k))$ 
	provided that  
	$\otimes^k X$ is in $L^q(\Omega; L^p(S^k))$ 
	and in $L^p(S^k; L^q(\Omega))$. 
	Note that the former assumption is equivalent to requiring 
	that $X \in L^{kq}(\Omega;L^p(S))$, while the latter 
	is implied if $X \in L^p(S; L^{kq}(\Omega))$. 
\end{remark}

\section{Numerical experiments}\label{sec:numerics}
In this section we present two numerical experiments aimed at validating the theoretical results established in Sections \ref{sec:dimension_dependent_constants} and \ref{sec:Minkowski}. All experiments were performed in Matlab R2021b.
For reproducibility, the related codes and datasetes are downloadable from \cite{Codes}.

\subsection{Linear elliptic boundary value problem with random forcing}
\label{sec:num_rbvp}
Let $I:=(0,1)$, $\eta\in (1,2)$ and, for every $y\in I$, let 
$f_y \from I \rightarrow \bbR$ be a forcing term defined as $f_y(x):=C_\eta|x-y|^{-\eta}$, 
$C_\eta$ being a suitable constant to be specified later, and $g_y\in\mathbb{R}$. 
For a given $y\in I$, 
we consider the boundary value problem
\begin{equation}\label{eq:RBVP}
	\left\{ \begin{aligned} 
  		-u_y^{\dprime}(x) &= f_y(x),\quad \forall x\in I,\\
   		u_y(0) &=0,\\
   		u_y^\prime(1)&=g_y.
	\end{aligned} \right.
\end{equation}
Problem \eqref{eq:RBVP} requires a nonstandard variational formulation since,
for any $y \in I$ and $\eta\in (1,2)$, $f_y \notin L^1(I)$, 
and thus the forcing term has to be interpreted as an element of a suitable dual space. 
Since $I$ has dimension one, for any $p\in [1,\infty]$, elements in $W^{1,p}(I)$ coincide
(upon modification on a subset of $\overline{I}$ of zero Lebesgue measure)
with a unique function $\tilde{v}\in C^0(\overline{I})$. 
Hence, we may define the subspace
\[
	W^{1,p}_{\{0\}}(I):=\{ v \in W^{1,p}(I):\;\tilde{v}(0)=0 \},
\]
and we denote by $W^{-1,p}_{\{0\}}(I)$ the dual space of $W^{1,p'\!}_{\{0\}}(I)$.
Due to the Poincar\'e inequality, 
$W^{1,p}_{\{0\}}(I)$ can be equipped with the norm 
$\|v\|_{W^{1,p}_{\{0 \}}(I)}:=\|v^\prime\|_{L^p(I)}$. 

By multiplying formally the first equation in \eqref{eq:RBVP} 
with a test function $v$
and integrating by parts both sides,
we derive the following variational formulation for any $p\in (1,\infty)$:
\begin{equation}\label{eq:var_form}
	\text{Find }u_y\in W^{1,p}_{ \{0 \}}(I)\text{ such that}\quad 
	B(u_y,v)=L_y(v) 
	\quad \forall v\in W^{1,p'\!}_{ \{0 \}}(I),
\end{equation}
where
\begin{align*}
	B&\colon W^{1,p}_{\{0 \}}(I)\times W^{1,p'\!}_{ \{0 \}}(I)\rightarrow \bbR,&
	B(u,v)&:=\int_I u^\prime(x) v^\prime(x)\rd x,\\
	L_y&\colon W^{1,p'\!}_{ \{0 \}}(I)\rightarrow \bbR,
	& L_y(v)&:=-\int_I v^\prime(x)F_y(x)\rd x + v(1)(F_y(1)+g_{y}),
\end{align*}
and $F_y(x):=C_\eta \operatorname{sign}(x-y)|x-y|^{1-\eta}/(1-\eta)$ 
is a primitive of $f_{y}$.
For any $ p \in (1,\infty)$, 
the bilinear form $B$ is continuous and satisfies the inf-sup condition 
\[
	\inf_{0\neq w\in W^{1,p}_{ \{0 \}}(I)} \sup_{0\neq v\in W^{1,p'\!}_{ \{0 \}}(I)} 
	\frac{ B(w,v) }{ \|w\|_{W^{1,p}_{ \{0 \}}(I)} \|v\|_{W^{1,p'\!}_{ \{0 \}}(I)} } 
	\geq 1,
\]
together with its symmetric counterpart, 
see \cite[Section~4.2.1]{KKChS2024} 
and \cite[Theorem~3.1]{babuska1981posteriori} for a constructive proof.
In addition, provided that $p\in \bigl( 1,\frac{1}{\eta-1} \bigr)$, 
for any $y\in I$, we have that $F_y\in L^p(I)$ and
\[
	\|L_y\|_{W^{-1,p}_{ \{0 \}}(I)}= \|F_y(1)+g_y-F_y\|_{L^p(I)}<\infty.
\]  
Therefore, for any $ y\in I$, $\eta\in (1,2)$ and $p\in (1,\frac{1}{\eta-1})$,
\eqref{eq:var_form} admits a unique solution $u_y\in W^{1,p}_{ \{0 \}}(I)$,
satisfying the a-priori bound 
$\|u_y\|_{W^{1,p}_{ \{0 \}}(I)}\leq \|L_y\|_{W^{-1,p}_{ \{0 \}}(I)}$. 
The range $\eta\in \bigl[\frac{3}{2},2 \bigr)$ is of particular interest since then $p\in (1,2)$.

Next, let $(\Omega,\cA,\bbP)$ be a complete probability space 
and assume that $y\from\Omega\rightarrow I$ is a 
uniformly distributed random variable and 
$g\from\Omega\rightarrow \mathbb{R}$ a real-valued random variable.
The previous discussion permits to construct pathwise, 
for every $\eta\in (1,2)$ and $p\in (1,\frac{1}{\eta-1})$, 
the function-valued random variable
$u\from\Omega\rightarrow W^{1,p}_{ \{0 \}}$, $u(\w):=u_{y(\w)}$ for $\bbP$-a.e.~$\w$,
where $u_{y(\w)}$ solves \eqref{eq:RBVP} with right hand side $f_{y(\w)}$ 
and Neumann boundary condition $g_{y(\w)}=g(\w)$. 

Our first goal is to numerically estimate the convergence rates 
of the Monte Carlo method to approximate $\bbE[u]$ 
via samples of the map $\w\mapsto u(\w)$. 
To this end, for any $\eta\in (1,2)$, we introduce the manufactured solution
\begin{equation}\label{eq:uy-explicit} 
	u_y(x) = |x-y|^{2-\eta} - y^{2-\eta},
\end{equation}
which satisfies \eqref{eq:RBVP} with $C_\eta=-(2-\eta)(1-\eta)$
and $g_y=-\frac{C_\eta}{1-\eta}|1-y|^{1-\eta}$. 
The first moment of $u$ is then
\[
	\bbE[ u ] 
	= 
	\int_\Omega u(\omega)\rd \bbP(\w)
	=
	\int_0^1 \left(|x-y|^{2-\eta}-y^{2-\eta}\right)\rd y
	=
	\frac{(1-x)^{3-\eta}+x^{3-\eta}-1}{3-\eta}.
\]
Due to this particular choice of $u$, we have that $F_y(1)=-g_y$,
implying that $\|L_y\|_{W^{-1,p}_{ \{0 \}}(I)}= \|F_y\|_{L^p(I)}$, 
and a direct calculation shows that $u\in L^q(\Omega;W^{1,p}_{ \{0 \}}(I))$ 
for any $q\in [1,\infty]$ and $p\in \bigl( 1, \frac{1}{\eta-1} \bigr)$.

In our numerical experiments, given a $p\in (1,2]$, 
we set $\eta=1+\frac{1}{p}-10^{-2}$ to impose sharply 
that $u\in L^\infty(\Omega;W^{1,p}_{ \{0 \}}(I))$.
We draw $K=50$ sets, 
$\{ y(\w^k_i) \}_{i=1}^{M_j}$, $k\in\left\{1,\dots,K\right\}$, 
of $M_j$ i.i.d.\ realizations of $y$, with $M_j=10^j$, $j\in \{1,\dots,4 \}$. 
The error associated to the Monte Carlo approximation is estimated as
\[
		\biggl\| \bbE [u] -\frac{1}{M_j}\sum_{i=1}^{M_j} u_i \biggr\|_{L^2(\Omega;W^{1,p}_{ \{0 \}}(I))}
		\approx 
		\Biggl(\frac{1}{K}\sum_{k=1}^K 
		\biggl\| (\bbE[u])^\prime - \frac{1}{M_j}\sum_{i=1}^{M_j} u^\prime(\w^k_i) 
		\biggr\|_{L^p(I)}^2 \Biggr)^{\nicefrac{1}{2}} \!. 
\]
Here, the $L^p(I)$-norm is computed using a composite Gauss--Jacobi quadrature, 
since the sample average has $M_j$ singularities, 
one at each of the realizations 
$\{ y(\w^k_i) \}_{i=1}^{M_j}$, $k\in\left\{1,\dots,K\right\}$.
The error decay is fitted with respect to the sample sizes $M_j$ using least squares. 
Table \ref{tab:RBVP_CMC} compares the estimated and theoretical convergence rates, 
and confirms the sharpness of the Monte Carlo error analysis 
based on Rademacher types at the continuous level.
\begin{table}
    \centering
    \begin{tabular}{|c|c|c|c|c|c|c|}
        \hline
        $p$ & 1.1 & 1.3 & 1.5 & 1.7 & 1.9 & 2 \\
        \hline
        rate  & 0.10 (0.09) & 0.24 (0.23) & 0.34 (0.33) & 0.42 (0.41) & 0.48 (0.47) & 0.5 (0.5) \\
         \hline
    \end{tabular}
    \caption{
    		For different values of $p$, the numerically estimated 
    		Monte Carlo convergence rates and the theoretical ones $1-\frac{1}{p}$ (in brackets) 
    		in the norm of $L^2(\Omega; W^{1,p}_{\{0\}}(I))$ 
    		for approximating the first moment 
    		of $u(\w) = u_{y(\w)}$, see~\eqref{eq:uy-explicit}.
    		}
    \label{tab:RBVP_CMC}
\end{table}

We next consider partitions $\mathcal{T}_{h}$ of the interval $I$
constructed on $N_h+1$ equispaced points $\left\{x_j:=jh\right\}_{j=0}^{N_h}$,
$h=\frac{1}{N_h}$, and corresponding subintervals $I_h^j=[x_j,x_{j+1}]$, 
$j\in\left\{0,\dots,N_h-1\right\}$. 
For each partition $\mathcal{T}_{h}$, 
we introduce the space of continuous, piecewise affine functions,
\[
	S_0^1(I,\mathcal{T}_h) 
	:= 
	\bigl\{ v\in C^0(\overline{I}):\; 
	v(0)=0,\; v|_{I_h^j}\in \mathbb{P}_1,\;j\in \{0,\dots,N_h-1 \}
	\bigr\}
	\subset W^{1,r}_{ \{0 \}}(I),
\]
for any $r\in [1,\infty]$. 
Associated with this space, we introduce the nodal interpolant 
${\cI^1_h\from C^0(\overline{I})\rightarrow S_0^1(I,\mathcal{T}_h)}$ 
which is uniquely defined by the values of $v$ at the points $ \{x_j \}_{j=1}^{N_h}$.
For any $y\in I$, a discrete approximant $u^h_y$ of $u_y$ 
can be computed using a Galerkin approximation of \eqref{eq:var_form} 
with respect to $S_0^1(I,\mathcal{T}_{h})$: 
\begin{equation}\label{eq:var_form_FEM}
	\text{Find }u^h_y \in S_0^1(I,\mathcal{T}_{h})\text{ such that}\quad  
	B(u^h_y,v)=L_y(v) \quad \forall v\in S_0^1(I,\mathcal{T}_{h}).
\end{equation}
It can be shown (see \cite[Section 4.2.2]{KKChS2024}) 
that $B$ satisfies a \emph{discrete} inf-sup condition, 
and the discrete inf-sup constant is equal to 1.  
This implies that, 
for any $ y \in I$, $\eta\in (1,2)$ and $p\in\bigl( 1,\frac{1}{\eta-1} \bigr)$, 
\eqref{eq:var_form_FEM} admits a unique solution 
which satisfies the a-priori stability bound 
$\|u^h_y\|_{W^{1,p}_{ \{0 \}}(I)}\leq \|L_y\|_{W^{-1,p}_{\left\{0\right\}}(I)}$, 
and which enjoys the quasi-optimality property,
\[
	\|u_y-u^h_y\|_{W^{1,p}_{ \{0 \}}(I)} 
	\leq 
	2\inf_{v\in S_0^1(I,\mathcal{T}_h)}
	\|u_y-v\|_{W^{1,p}_{ \{0 \}}(I)}.
\]
Note, however, that the best approximation error does not decay with the optimal rate $h$
since $u_y\notin W^{2,p}(I)$. 
Next, we introduce the finite-dimensional random variable 
$u_h\from \Omega\rightarrow  S_0^1(I,\mathcal{T}_h)$, 
$u_h(\w):=u^h_{y(\w)}$ for $\bbP$-a.e.~$\w$,
which further satisfies $u_h\in L^q(\Omega;W^{1,p}_{ \{0 \}}(I))$,
for any  $q\in [1,\infty]$ and $p\in (1,\frac{1}{\eta-1})$.
For our experiments, 
samples of $u_h$ are computed by directly evaluating 
the continuous piecewise affine interpolant for samples of~$u$, 
which is justified by the relation 
$u^h_y=\cI^1_{h} u_y$ holding for all $y \in I$,
see \cite[Chapter~0.7]{brenner2007mathematical}. 

We now study the convergence of the single-level Monte Carlo method, 
in light of the new analysis established in Section \ref{sec:dimension_dependent_constants}
that accounts for dimension-dependent type constants. 
To this end, we first show that for any $p\in (1,2]$, 
the finite-dimensional subspace $S_0^1(I,\mathcal{T}_h)\subset W^{1,p}_{ \{0 \}}(I)$, 
of dimension $\operatorname{dim}(S_0^1(I,\mathcal{T}_h))=N_h$, 
satisfies Assumption~\ref{ass:constants_subspaces} 
for a suitable choice of $a_0, a_1 \in [0,\infty)$.
Let $v_h\in S_0^1(I,\mathcal{T}_h)$ and denote by $\vb_\delta\in\mathbb{R}^{N_h}$
the vector of components $(\vb_\delta)_i:=v_h(x_i)-v_h(x_{i-1})$, $i=1,\dots,N_h$. 
A direct calculation shows that
\[
	\|v_h\|_{W^{1,p}_{ \{0 \}}(I)}
	=
	\Biggl( 
	\sum_{j=0}^{N_h-1} \int_{I^j_h} |v_h'(t)|^p\rd t \Biggr)^{\nicefrac{1}{p}}
	= 
	h^{\frac{1}{p}-1} \|\vb_{\delta}\|_{\ell^p_{N_h}} \!.
\]
Then, for any $r\in [p,2]$, for any Rademacher family $(r_j)_{j\in\bbN}$ on a complete probability space 
$(\widetilde{\Omega},\widetilde{\cA},\widetilde{\bbP})$, 
for every $n\in\bbN$, and all functions $v_{h,1},\ldots,v_{h,n}\in S_0^1(I,\mathcal{T}_h)$
and associated vectors $\vb_{\delta,1},\ldots,\vb_{\delta,n}$,
\begin{align*} 
		\biggl\| 
		\sum_{j=1}^n r_j v_{h,j} &\biggr\|_{L^r(\widetilde{\Omega};W^{1,p}_{ \{0 \}}(I))}
		= 
		h^{\frac{1}{p}-1} 
		\biggl\|\sum_{j=1}^n r_j \vb_{\delta,j} \biggr\|_{L^r(\widetilde{\Omega};\ell^p_{N_h})}  
		\\
		&\leq 
		\tau_r(\ell^p_{N_h}) 
		h^{\frac{1}{p}-1} 
		\Biggl( \sum_{j=1}^n \|\vb_{\delta,j}\|^r_{\ell^p_{N_h}}  \Biggr)^{\nicefrac{1}{r}} 
		= 
		N_h^{\frac{1}{p}-\frac{1}{r}} \Biggl(\sum_{j=1}^n \|v_{h,j}\|^r_{W^{1,p}_{ \{0 \}}(I)} \Biggr)^{\nicefrac{1}{r}} \!,  
\end{align*}
see Example~\ref{ex:choice_Mell_SLMC}. 
This shows that $S_0^1(I,\mathcal{T}_h)$ satisfies 
Assumption \ref{ass:constants_subspaces} with $a_0=1$, $a_1=\frac{1}{p'}$ and $C_{\tau}=1$.
Next, we let $p\in (1,2]$, $\eta=1+\frac{1}{p}-10^{-2}$, 
and consider a finite sequence of mesh sizes $h_{\ell}=2^{-\ell}$, $\ell\in \mathbb{N}$, 
and fit using least squares a positive constant $C_{\alpha_p}$ and a rate 
$\alpha_p\in (0,\infty)$ such that 
\begin{align*}
		\|\bbE [u] - \bbE [ u_{h_\ell}] \|_{W^{1,p}_{ \{0 \}}(I)} 
		=
		\|\bbE [ u ] -\bbE[ \cI^1_{h_\ell} u ] \|_{W^{1,p}_{ \{0 \}}(I)}
		&= 
		\|\bbE[ u ] - \cI^1_{h_\ell} \bbE[ u ]\|_{W^{1,p}_{ \{0 \}}(I)}\\
		& \leq C_{\alpha_p} h_{\ell}^{\alpha_p}, 
\end{align*}
where the second equality follows from 
the linearity of the expectation operator and the continuity of $\bbE [ u ]$.
As for all $p\in (1,2]$ and every $\eta\in \bigl[1,1+\frac{1}{p} \bigr)$ 
we have that $\E[ u] \in W^{2,p}(I)$, 
we expect the piecewise affine interpolant to converge at a rate $\alpha_p\approx 1$, 
which is confirmed by numerical experiments.
For a sequence of decreasing tolerances $\{ \epsilon_j \}_{j=1}^6$, 
the fitted parameters $(C_{\alpha_p},\alpha_p)$ allow us to define 
a sequence of partitions $\bigl\{\mathcal{T}_{h_{\ell_j}} \bigr\}_{j=1}^{6}$, 
with $h_{\ell_j}=2^{-\ell_j}$, 
$\ell_j:=\Bigl\lceil \bigl| \tfrac{1}{\alpha_p}\log_2 \bigl(\frac{\epsilon_j}{C_{\alpha_p}} \bigr) \bigr|\Bigr\rceil$, 
which guarantee that the bias associated to $S_0^1(I,\mathcal{T}_{h_{\ell_j}})$ is less than $\epsilon_j$, 
for $j\in\left\{1,\dots,6\right\}$. 
We then consider three different scalings of the sample sizes $ \{M_j \}_{j=1}^6$ 
as functions of the prescribed error tolerances $ \{\epsilon_j \}_{j=1}^6$. 
We consider (i) $M_j= \bigl\lceil\epsilon_j^{-2} \bigr\rceil$, 
which is the asymptotic optimal scaling 
derived in a classical Monte Carlo error analysis 
in Hilbert spaces, 
(ii)~${M_j=\bigl\lceil\epsilon_j^{-\pprime} \bigr\rceil}$, 
which is the asymptotic optimal scaling of Monte Carlo methods 
for random variables with values in a Banach space of type~$p$, 
and (iii) $M_j = \bigl\lceil \epsilon_j^{-2-(\pprime-2)/(\pprime \alpha_p)} \bigr\rceil$, 
which is the optimal scaling derived in Section~\ref{sec:dimension_dependent_constants} 
taking into account 
the dimension-dependent Rademacher constants. 
The latter scaling can be derived by 
substituting~$N_{h_{\ell_j}}\propto 2^{\ell_j}\propto \epsilon_j^{-\frac{1}{\alpha_p}}$ 
into the first case of~\eqref{eq:explicit_choice_Mell_SLMC}, 
since $\alpha_p\geq a_1=\frac{1}{p'}$, 
and noting that $\bar{q}=2$ for this example.
In each setting, we draw $K=30$ sets of $M_6$ realizations of~$y$, 
$\{ y( w^k_i)  \}_{i=1}^{M_6}$, $k\in\left\{1,\dots,K\right\}$, 
and approximate the Monte Carlo error 
on each partition $\mathcal{T}_{h_{\ell_j}}$ using
\begin{equation}\label{eq:error_DMC}  
		\mathrm{err}^{\sf SL}_{2}(u) 
		\approx 
		\Biggl( \frac{1}{K}\sum_{k=1}^K 
		\biggl\| (\bbE[ u ] )^\prime
		- \frac{1}{M_j}\sum_{i=1}^{M_j} u_{h_{\ell_j}}^\prime \!(\w^k_i) \biggr\|^2_{L^p(I)} 
		\Biggr)^{\nicefrac{1}{2}} \!. 
\end{equation}
The $L^p(I)$-norms in \eqref{eq:error_DMC} are computed 
using a composite Gauss--Legendre quadrature formula 
since $(\bbE [u] )^\prime$ is smooth, 
while the sample average is piecewise constant 
over the partition $\mathcal{T}_{h_{\ell_j}}$.

Figure~\ref{Fig:Decay_SLMC_RBVP_first_moment} 
shows the decay of the single-level Monte Carlo error
for a decreasing sequence of tolerances and for $p\in \{1.5,1.6,1.8 \}$. 
The numerical results do confirm the necessity 
for the more sophisticated Monte Carlo analyses, 
based either on the Rademacher type of the infinite-dimensional image space, 
or by additionally considering the dependence of the type constants 
on the dimensions of the approximation spaces. 
The classical Hilbertian analysis falls short in practice, 
suggesting smaller sample sizes which do not allow to reach asymptotically the prescribed tolerances.

\begin{figure}
\includegraphics[scale=0.28]{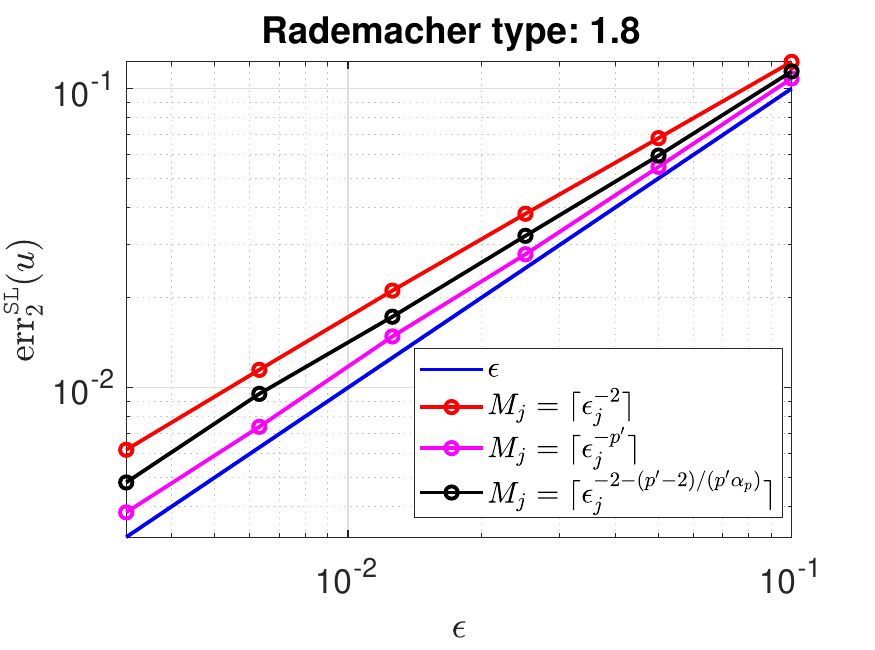}
\includegraphics[scale=0.28]{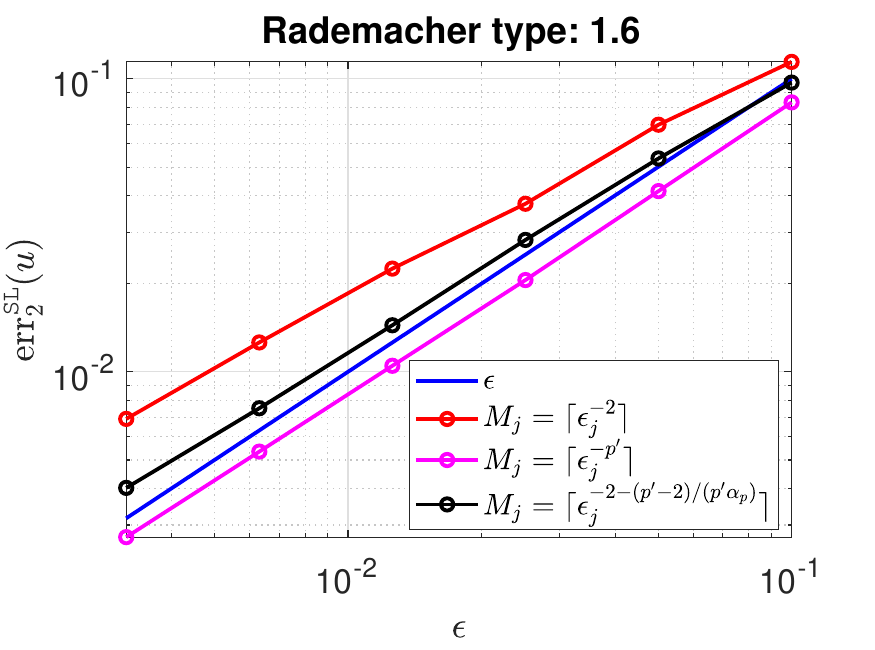}
\includegraphics[scale=0.28]{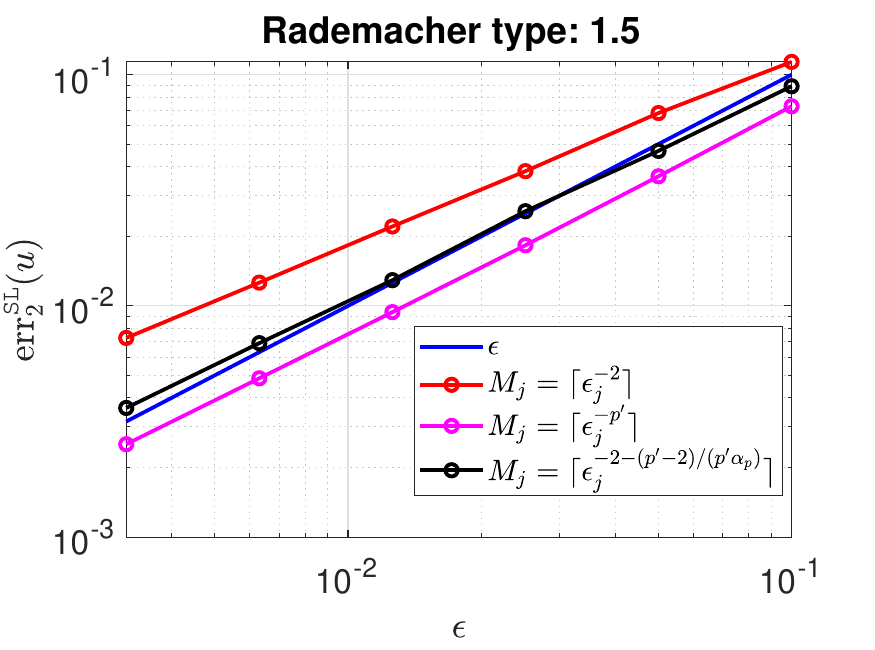}
\caption{
		Decay of the single-level Monte Carlo error 
		measured in the norm of $L^2(\Omega;L^p(I))$ 
		for the first moment of the solution to \eqref{eq:var_form}, 
		for different Rademacher types and choices of sample sizes.
		}\label{Fig:Decay_SLMC_RBVP_first_moment}
\end{figure}

We finally analyze the approximation of the second moment of $u$. 
Let $\mathcal{T}^2_h$ be a uniform grid of $[0,1]^2$ obtained by 
tensorizing the one-dimensional partition $\mathcal{T}_h$. 
We introduce the approximation space
\[
	S_0^{1,1}(I\times I,\mathcal{T}^2_{h}):=
	S_0^1(I,\mathcal{T}_h)\otimes S_0^1(I,\mathcal{T}_h)
	\subset \otimes^2_\varepsilon W^{1,r}_{\left\{0\right\}}, 
\]
for any $r\in [1,\infty]$, and the nodal interpolant 
$\cI^{2}_{h_\ell}\from C^0(\overline{I\times I})\rightarrow S_0^{1,1}(I\times I,\mathcal{T}^2_{h_\ell})$, 
satisfying $\cI^{2}_{h_\ell}=\cI^1_{h_\ell}\otimes \cI^1_{h_\ell}$.
Since a closed form expression for $\bbE[ u \otimes u]$ is not available, 
we used in our experiments a numerically computed ``reference second moment'',
$\bbE\bigl[ \cI^{2}_{h_{\ell_\refe}}\!(u \otimes u) \bigr]$.
To this end, we used a sufficiently small mesh size 
$h_{\ell_\refe}=2^{-\ell_\refe}$, $\ell_{\refe}\in \mathbb{N}$, 
which is stored as matrix of dimension $(N_{h_{\ell_\refe}}\!+1)\times (N_{h_{\ell_\refe}}\!+1)$. 
Its $(i,j)$th element corresponds to
\begin{equation}\label{eq:reference_second_moment}
	(\bbE [u\otimes u])(x_i,x_j)=\int_0^1 u_y(x_i)u_y(x_j)\rd y,
\end{equation}
where $x_i$ and $x_j$ are nodes of $\mathcal{T}_{h_{\ell_\refe}}\!$ 
for any $i,j\in \left\{0,\dots, N_{\ell_\refe}\right\}$.
Since the mapping $y\rightarrow u_y(x_i)u_y(x_j)$ 
is not differentiable at $y=x_i$ and $y=x_j$, 
the integral in \eqref{eq:reference_second_moment} is split 
over the subintervals $[0,\min\{x_i,x_j\}]$, $[\min\{x_i,x_j\},\max\{x_i,x_j\}]$ and $[\max\{x_i,x_j\},1]$. 
Then, each of the three integrals is approximated using an adaptive composite $hp$ quadrature formula.
Similarly to the first moment, given $p\in (1,2]$, 
we set $\eta=1+\frac{1}{p}-10^{-2}$, and estimate through 
least squares a pair $(C_{\alpha_p},\alpha_p)\in (0,\infty)^2$ 
such that
\begin{equation}\label{eq:bias_second_moment_RBVP}
	\bigl\|
	\bbE\bigl[ \cI^{2}_{h_{\ell_\refe}} \! ( u \otimes u) \bigr] 
	-
	\bbE\bigl[ \cI^{2}_{h_{\ell}} (u \otimes u) \bigr] 
	\bigr\|_{\otimes_{\varepsilon}^2 W^{1,p}_{ \{0 \}}(I)}
	\leq C_{\alpha_p} h_{\ell}^{\alpha_p}, 
	\quad \ell<\ell_\refe.
\end{equation} 
Since $(x,x')\mapsto (\bbE[ u\otimes u ])(x,x')$ has a kink along the diagonal $x'=x$, 
we observe rates $\alpha_p<1$, which  deteriorate as $p$ decreases. 
Note further that \eqref{eq:bias_second_moment_RBVP} involves the 
\emph{numerical evaluation of the injective tensor norm} 
$\| \,\cdot\, \|_{\otimes_{\varepsilon}^2 W^{1,p}_{ \{0 \}}(I)}$. 
The numerical algorithm we used for this purpose 
is detailed in the Appendix.

Figure \ref{Fig:Decay_SLMC_RBVP_second_moment} shows the decay of 
\[
	\mathrm{err}^{2,{\sf SL}}_{2,\varepsilon}(u)
	= 
	\biggl\| 
	\bbE \bigl[ \cI^{2}_{h_{\ell_\refe}}\! (u\otimes u) \bigr] 
	- 
	\frac{1}{M_j} 
	\sum_{i=1}^{M_j} \cI^1_{h_{\ell_j}} \! u_i\otimes 
	\cI^1_{h_{\ell_j}} \! u_i 
	\biggr\|_{L^2(\Omega;\otimes^2_\varepsilon W^{1,p}_{ \{0 \}}(I))},
\]
for a decreasing sequence of tolerances 
$ \{\epsilon_j \}_{j=1}^6$, for $p\in\left\{1.5,1.6,1.8\right\}$, 
and for the three different choices of $M_j$ 
detailed for the first moment.

We emphasize once more that increasing 
the sample size proportionally to $\epsilon^{-2}$, 
as would be suggested by a Monte Carlo error analysis 
in Hilbert spaces
is insufficient to achieve the target tolerances. 
This highlights that an analysis, which takes into account the 
Rademacher type of the image space of the random variable, 
is not merely of theoretical interest, 
but also of practical relevance. 
 
\begin{figure}
\includegraphics[scale=0.28]{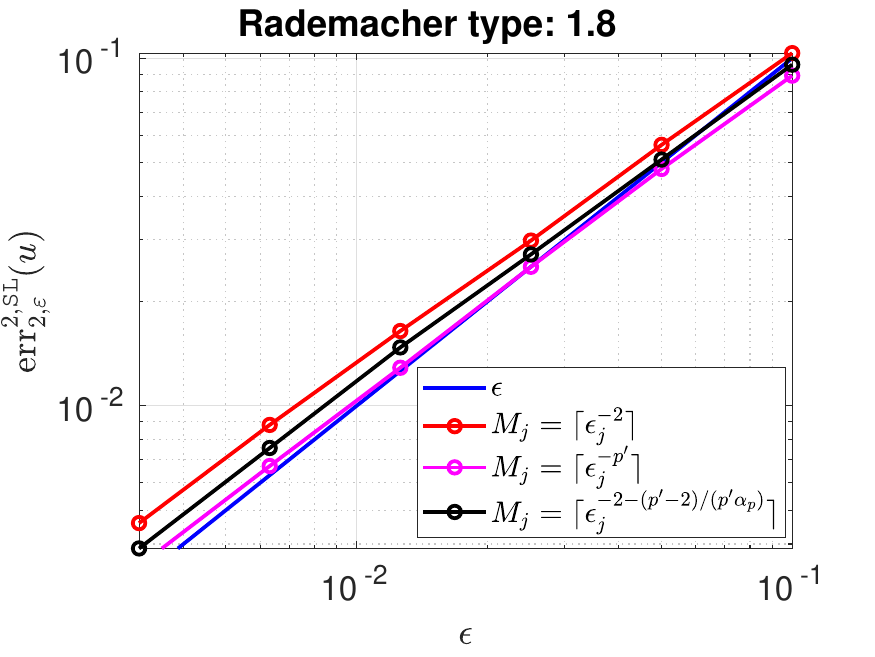}
\includegraphics[scale=0.28]{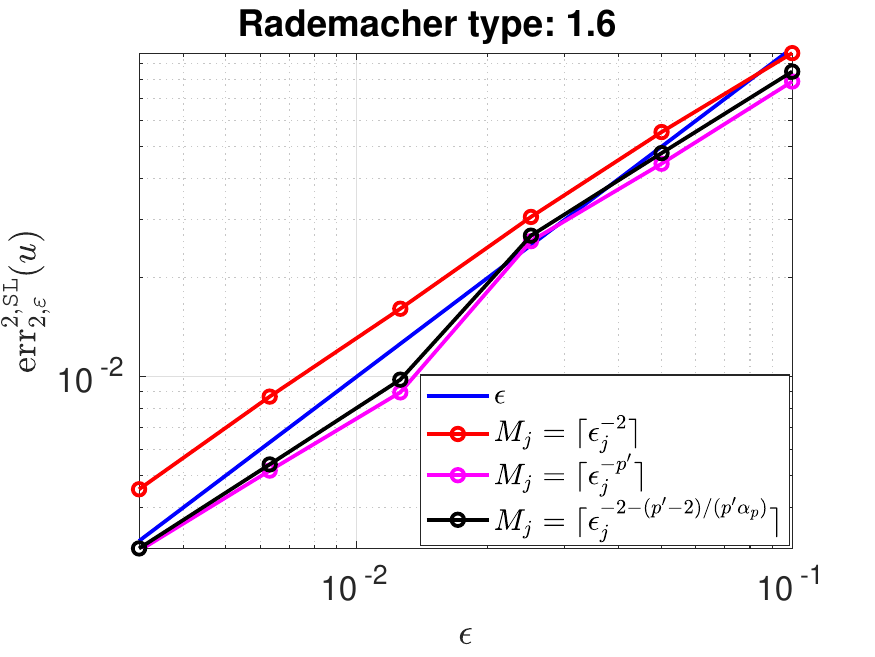}
\includegraphics[scale=0.28]{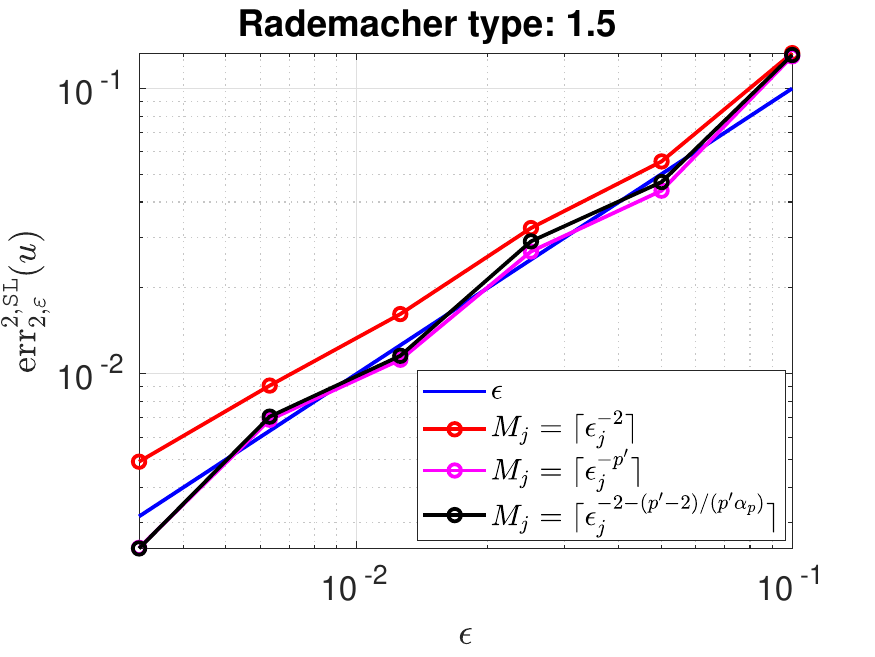}
\caption{
		Decay of the single-level Monte Carlo error 
		in the norm of 
		$L^2(\Omega; \otimes_\varepsilon^2 W^{1,p}_{ \{0\}}(I))$ 
		for the second moment of the solution to \eqref{eq:var_form}, 
		for different Rademacher types and choices of sample sizes.
		}\label{Fig:Decay_SLMC_RBVP_second_moment}
\end{figure}

\subsection{Function Approximation}\label{sec:num_FA}
The goal of this subsection is to verify numerically Corollaries \ref{Cor:Gen_MC_Lpspaces}
and \ref{Cor:MLMC_Optimal_Lpspaces}, 
concerning Monte Carlo convergence rates for $L^p$-valued 
random variables satisfying additional integrability properties. 

To this end, we let $(\Omega,\mathcal{A},\bbP)$ be a complete probability space, 
$I:=(0,1)$ and $y\from\Omega\rightarrow I$ 
be a uniformly distributed random variable over $I$. 
We consider the function-valued random variable 
\begin{equation}\label{eq:functiona_approximation}
	u\from\Omega\rightarrow C^0(\overline{I}), 
	\quad 
	u(\w):= (x+y(\w))^{-\eta},\quad\text{with } x\in I \text{ and }\eta\in (0,2).
\end{equation}
Despite being continuous on $\overline{I}$, $\bbP$-a.s., 
$u$ exhibits a singularity at $(0,0)$ 
if interpreted as the map $I\times I \ni (x,y)\mapsto |x+y|^{-\eta}$. 
The strength of this singularity determines the integrability of $u$. 
Direct calculations show that 
$u\in L^q(\Omega;L^p(I))\cap L^p(I;L^q(\Omega))$ 
provided that $\eta< \frac{1}{p}+\frac{1}{q}$, and that
\[ 
	\bbE[u] 
	= 
	\int_0^1 (x+y)^{-\eta}\rd y = 
	\frac{1}{1-\eta} 
	\bigl( (x+1)^{1-\eta} - x^{1-\eta} \bigr).
\]
The range $\eta\in (1,2)$ is of particular interest 
since then $u\notin L^2(\Omega;L^2(I))$ and $u\notin L^2(I;L^2(\Omega))$.

We first analyze the Monte Carlo convergence rates to approximate 
$\bbE[ u ]$ via samples of the map $\w\mapsto u(\w)$. 
Given a couple $(p,q)\in [1,2]\times (1,2]$, 
we impose sharply that 
$u\in L^q(\Omega;L^p(I))\cap L^p(I;L^q(\Omega))$ 
by setting $\eta=\frac{1}{p}+\frac{1}{q}-10^{-2}$. 
The rates are estimated as in Subsection~\ref{sec:num_rbvp}. 
Note, however, that due to the low integrability 
of the map $\w\mapsto \|u(\w)\|_{L^p(I)}$ 
with respect to the probability measure $\bbP$, 
the outer Monte Carlo average converges slowly 
and, hence, we set $K=10^4$. 
Table~\ref{tab:FA_CMC} verifies Corollary~\ref{Cor:Gen_MC_Lpspaces}, 
since the convergence rates are determined 
exclusively by the integrability parameter $q$, 
\textit{regardless} of the Rademacher type $p$.
Of particular interest is the case $p=1$, 
see Remark~\ref{remark:L1}.

\begin{table}
    \centering
    \begin{tabular}{|c|c|c|c|c|c|c|}
        \hline
        \diagbox{$p$}{$q$} & 1.1 & 1.3 & 1.5 & 1.7  & 2 \\
        \hline
         1 &  0.1 (0.09) & 0.23 (0.23) &  0.33 (0.33)  & 0.39 (0.41) &   0.52  (0.5)\\
		\hline         
         1.1  & 0.16 (0.09) & 0.28 (0.23) &  0.38 (0.33) &  0.44 (0.41) &  0.48 (0.5)\\
         \hline 
         1.3  & 0.09 (0.09) & 0.23 (0.23) & 0.33 (0.33) &  0.40 (0.41) &  0.46 (0.5)\\
         \hline
         1.5  &  0.04 (0.09) & 0.16 (0.23) &  0.28 (0.33) &  0.37 (0.41) & 0.44 (0.5)\\
         \hline
         1.7  &  0.12 (0.09) & 0.30 (0.23) & 0.35 (0.33)  &  0.42 (0.41) & 0.47 (0.5)\\
         \hline
         2    &  0.13 (0.09) & 0.23 (0.23) & 0.36 (0.33) & 0.43 (0.41) & 0.51 (0.5)\\
         \hline
    \end{tabular}
    \caption{ 
 	   For different values of $p$ and $q$, 
 	   the numerically estimated 
 	   Monte Carlo convergence rates 
 	   and the theoretical ones $1-\frac{1}{q}$ (in brackets) 
 	   in the norm of $L^q(\Omega;L^p(I))$
 	   for approximating the first moment of $u$ 
 	   in~\eqref{eq:functiona_approximation}.
 	   }\label{tab:FA_CMC}
\end{table}

In addition, we perform an additional numerical experiment 
where we set $\eta=1.1$. 
For every couple $(p,q)$ such that $\eta<\frac{1}{p}+\frac{1}{q}$, 
we are in the setting of Remark~\ref{remark:extra_integrability}, 
since $u\in L^{r}(\Omega;L^p(I))\cap L^p(I;L^{r}(\Omega))$ 
holds for any $r\in [1,\widehat{q})$, 
where we set  
${\widehat{q} :=
\sup\bigl\{s\in [q,2]: \frac{1}{s}>\eta-\frac{1}{p} \bigr\}}$.
Hence, we expect that the Monte Carlo convergence rates   
are determined by $\widehat{q}$, 
and this is numerically confirmed by Table~\ref{tab:FA_CMC2}.

\begin{table}
    \centering
    \begin{tabular}{|c|c|c|c|c|}
        \hline
        \diagbox{$p$}{$q$} & 1.1 & 1.3 & 1.5  \\
        \hline
         1 &  0.49 (0.50) & 0.50 (0.50) & 0.50 (0.50)   \\
		\hline         
         1.1  & 0.49 (0.50) & 0.49 (0.50) &  0.49 (0.50)\\
         \hline 
         1.3  & 0.49 (0.50) & 0.49 (0.50) & 0.49 (0.50) \\
         \hline
         1.5  &  0.47 (0.50) & 0.47 (0.50) & 0.48 (0.50) \\
         \hline
         1.7  &  0.44 (0.49) & 0.44 (0.49) & 0.45 (0.49)  \\
         \hline
         2    &  0.39 (0.40) & 0.39 (0.40) &  0.40 (0.40) \\
         \hline
    \end{tabular}
    \caption{
    For $\eta=1.1$ and different values of $p$ and $q$, 
    the numerically estimated 
    Monte Carlo convergence rates   
    and theoretical ones $1-\frac{1}{\widehat{q}}$ (in brackets) 
    in the norm of $L^q(\Omega;L^p(I))$ 
    for the approximation of the first moment 
    of $u$ in \eqref{eq:functiona_approximation}.}
    \label{tab:FA_CMC2} 
\end{table}

Next, let 
$S^0(I,\mathcal{T}_h)
:=
\bigl\{v\in L^2(I): v|_{I^j_h}\in \bbP_0,\; j\in \{0,\dots,N_h-1 \} \bigr\}$ 
be the space of piecewise constant functions over $\mathcal{T}_h$. 
For $\bbP$-a.e.~$\w$, we now approximate $u(\w)$ 
using the projection 
$\cI^0_h\from L^1(I)\rightarrow S^0(I,\mathcal{T}_h)$ defined by 
\[
	\bigl( \cI^0_h u(\w) \bigr)\big|_{I^j_h}
	:=
	\frac{1}{h}\int_{I^j_h} u(\w)\rd x
	=\frac{(x_{j+1}+y(\w))^{1-\eta}-(x_{j}+y(\w))^{1-\eta}}{(1-\eta)h}.
\]
Similarly to Subsection~\ref{sec:num_rbvp}, 
we first fit $\alpha_p$ and $C_{\alpha_p}$ such that the bias satisfies
\[
	\bigl\| \bbE[ u ] - \bbE\bigl[ \cI_h^0 u \bigr] \bigr\|_{L^p(I)} 
	= 
	\| \bbE[ u ] - \cI_h^0 \bbE[ u ]\|_{L^p(I)} 
	\leq C_{\alpha_p} h^{\alpha_p},
\]
where the equality holds due to the Fubini theorem. 
Note that, since $\bbE[u] \in W^{s,p}(I)$ 
for every $s\in \bigl[ 0, 1-\eta+\frac{1}{p}\bigr)$  
and $\cI^0_h$ satisfies
\begin{equation}\label{eq:conditions_interpolation_thm}
	\bigl\| \cI^0_h \bigr\|_{L^p(I)\rightarrow L^p(I)}\leq 1,
	\qquad 
	\bigl\| I-\cI^0_h \bigr\|_{W^{1,p}(I)\rightarrow L^p(I)}\leq h ,
\end{equation}
we expect a rate $\alpha_p=\min\bigl\{ 1,1-\eta+\frac{1}{p} \bigr\}$ 
due to the interpolation theorem, see e.g.\ \cite[Theorem 14.1.5]{brenner2007mathematical}, 
which is numerically confirmed. 

In Figure~\ref{Fig:MC_FA}, we report two numerical experiments 
for the single-level Monte Carlo method, 
that differ due to the imposed integrability of $u$. 
The fitted parameters $(\alpha_p,C_{\alpha_p})$ determine 
the partitions such that the associated bias are smaller 
than the tolerances $\{\epsilon_j \}_{j=1}^6$. 
We remark that scaling the sample sizes asymptotically according 
to the error bound of Corollary \ref{Cor:Gen_MC_Lpspaces} permits to meet the prescribed tolerances.

\begin{figure}
\includegraphics[scale=0.3]{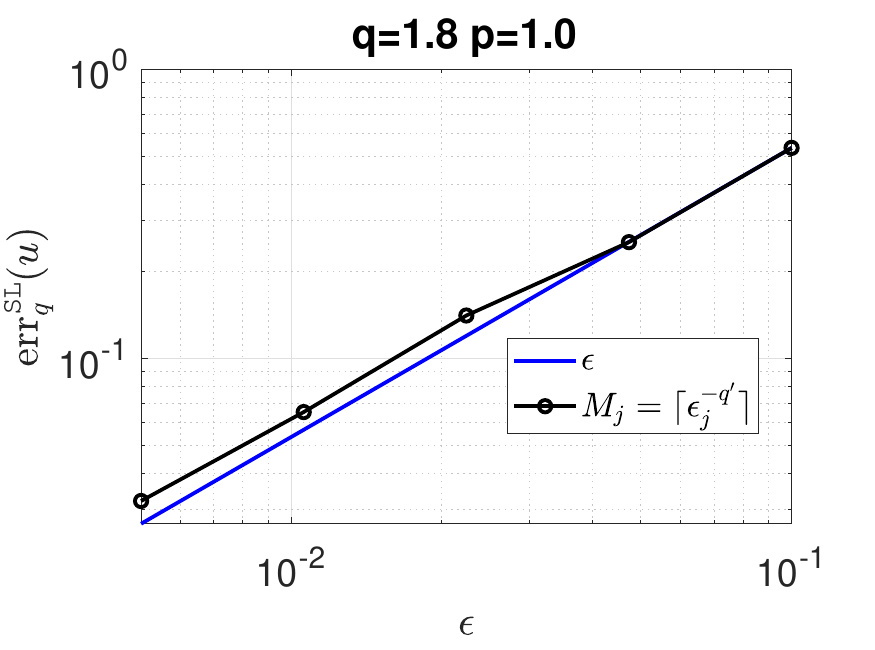}
\includegraphics[scale=0.3]{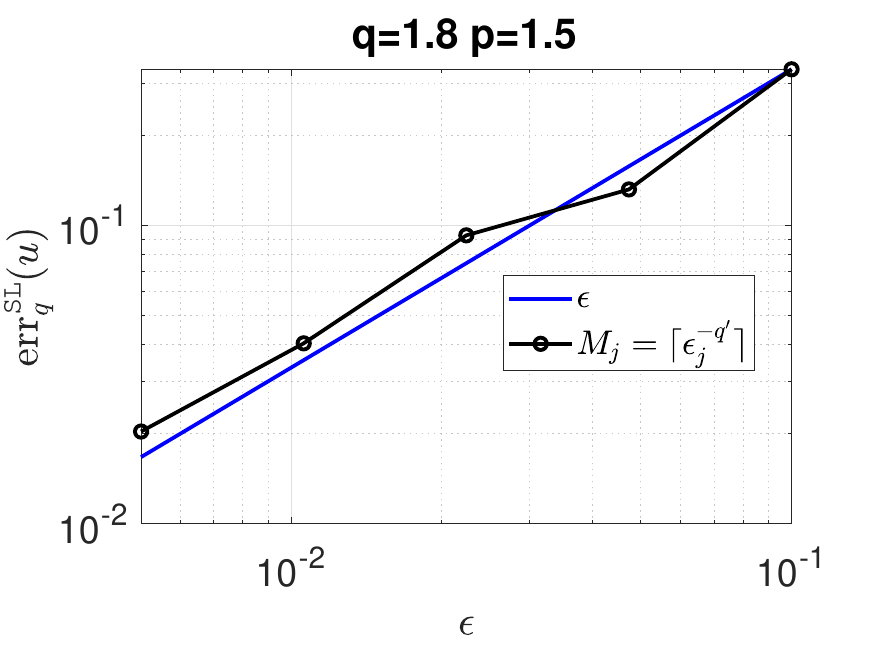}
\includegraphics[scale=0.3]{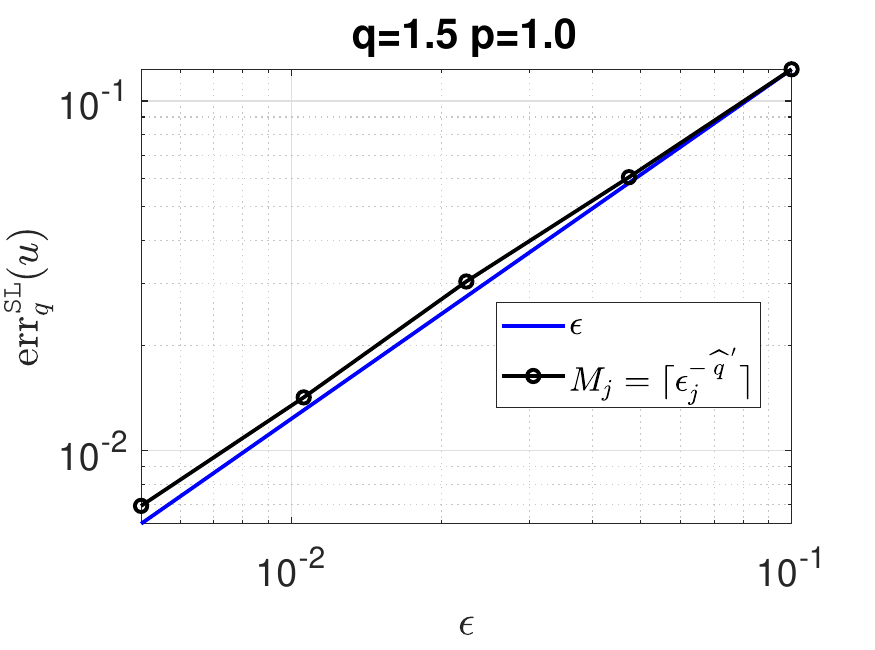}
\includegraphics[scale=0.3]{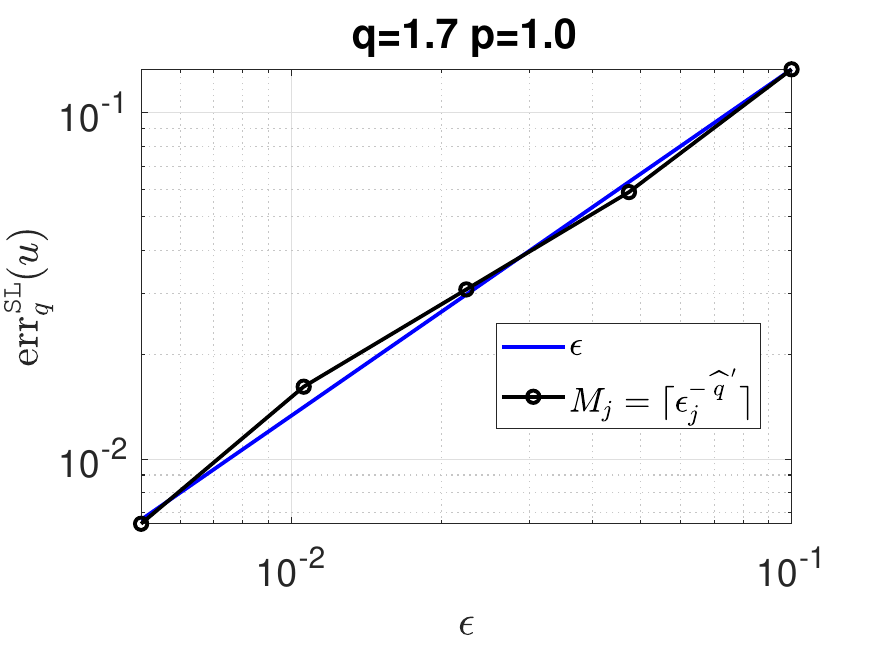}
\caption{ 
	Decay of the single-level Monte Carlo error 
	measured in the norm of 
	$L^q(\Omega;L^p(I))$ 
	for the first moment 
	of $u$ in \eqref{eq:functiona_approximation}.   
	The first row refers to $\eta=\frac{1}{q}+\frac{1}{p}-10^{-2}$, 
	while the second one to $\eta=1.1$.
		}\label{Fig:MC_FA}
\end{figure}

Next, we consider the multilevel Monte Carlo method aiming at investigating sharpness of
the complexity result of Corollary \ref{Cor:MLMC_Optimal_Lpspaces}.
The setting of the numerical experiments is as follows. 
We set $\eta=1.1$ and, for every tolerance $\epsilon_j$, we let 
$L_j=\Bigl\lceil \bigl|\frac{1}{\alpha_p}\log_2\bigl(\frac{\epsilon_j}{C_{\alpha_p}}\bigr)\bigr|\Bigr\rceil$, 
and introduce a nested sequence of uniform partitions $\mathcal{T}_{h_{\ell}}$, 
with $\ell\in \cI_j:=\left\{\ell_{\min},\dots,L_j\right\}$, $\ell_{\min}:=4$. 
Each partition $\mathcal{T}_{h_{\ell}}$ has 
${N_{\ell}:=\frac{1}{h_{\ell}}+1=2^{\ell}+1}$ points.
For a given couple $(p,q)$ such that $\eta<\frac{1}{p}+\frac{1}{q}$, 
we now discuss the values of the parameters 
$\alpha,\beta(r), \gamma$ of Theorem~\ref{thm:MLMC_Lpspaces}. 
As previously argued, 
we have that $\alpha_p=\min\bigl\{ 1,1-\eta+\frac{1}{p} \bigr\}$.  
Furthermore, as the computational cost of $\cI^0_h$ is linear 
with respect to $N_\ell$, we find that $\gamma=1$. 
Concerning the strong rate, it holds that $\beta_p(r)=\frac{1}{p}+\frac{1}{r}-\eta$,
hence $b_0=\frac{1}{p}-\eta$, $b_1=1$ and $t=t_p:=\frac{2}{\frac{1}{p}-\eta+1}$. 

Figure~\ref{Fig:MLMC} shows the error decay of the multilevel Monte Carlo method 
(measured in norm of $L^q(\Omega;L^p(I))$), 
the growth of the theoretical computational cost 
(measured as $\text{Cost}(\epsilon_j):=\sum_{\ell\in \cI_j} M_\ell\times N_{\ell}$) 
and the growth of the computational times (measured in seconds)
as the prescribed tolerances decrease, and for two couples $(p,q)$. 
In the first row, we set $q=1.5$, $p=1$ which leads to $\widehat{q}=2$ and $t_p\approx 2.2$. 
Since $\alpha_p=1-\eta+\frac{1}{p}<1$, it holds that $\alpha_p=b_0+b_1$ and we are in the third case of \eqref{eq:Corollary_cC_LqLp}, 
which states that the optimal $r$ is $r=\widehat{q}=2$. 
The computational cost of the multilevel Monte Carlo estimator should then grow as 
$\cC^{\sf ML}_{q}(X)
\sim 
\epsilon^{-\widehat{q}'-\frac{\gamma-\beta_p(\widehat{q})\widehat{q}'}{\alpha_p}}\! 
=
\epsilon^{-2.2}$ 
which is numerically verified. 
For comparison, we also include the error decay and the growth of computational cost and times, 
obtained by scaling the number of samples according to the non optimal choice with $r'=q'=3$. 
This would have been the natural choice prior to our refined analysis.
We observe a remarkable reduction in CPU time 
thanks to Corollary~\ref{Cor:MLMC_Optimal_Lpspaces}.
The second row reports the results obtained by setting $q=1.5$, $p=1.2$
which lead to $t_p\approx 2.72$. 
We are still in third case of \eqref{eq:Corollary_cC_LqLp}, 
so the optimal $r$ is $\widehat{q}=2$ and 
the optimal multilevel Monte Carlo estimator 
has a computational cost of order $\cC^{\sf ML}_{q}(X)\sim \epsilon^{-2.73}\!$, 
as shown in Figure~\ref{Fig:MLMC}.

\begin{figure}
\includegraphics[scale=0.275]{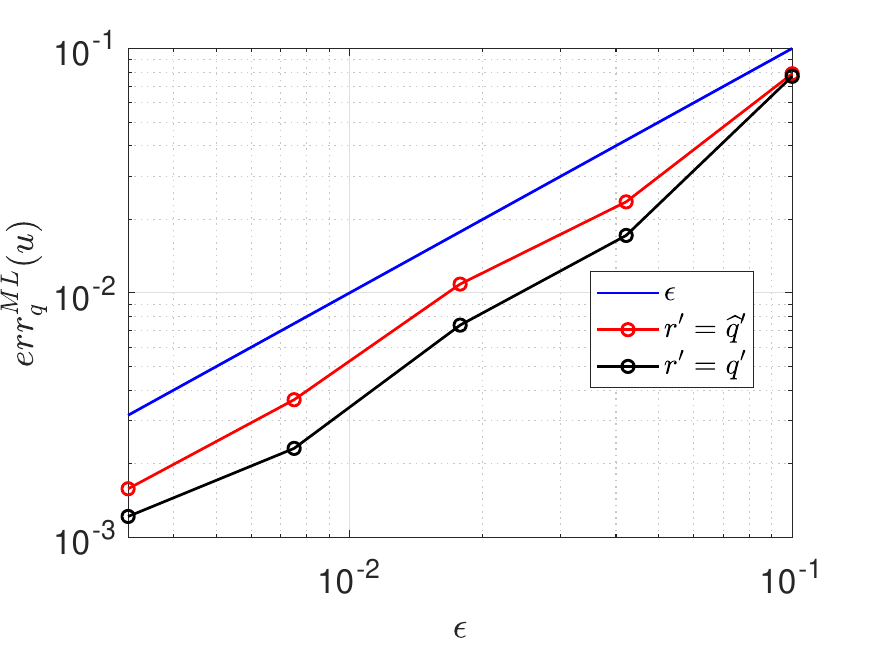}
\includegraphics[scale=0.275]{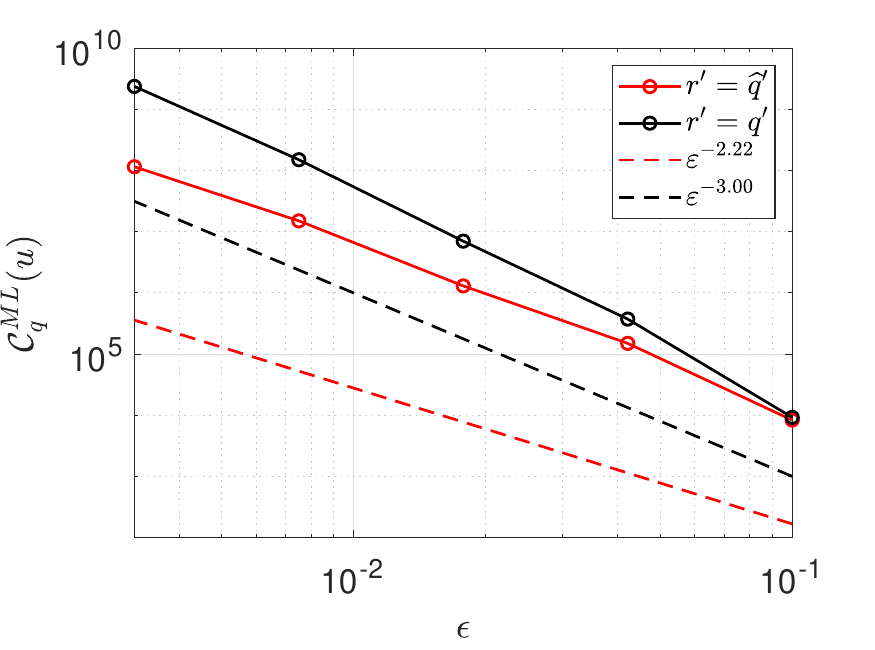}
\includegraphics[scale=0.275]{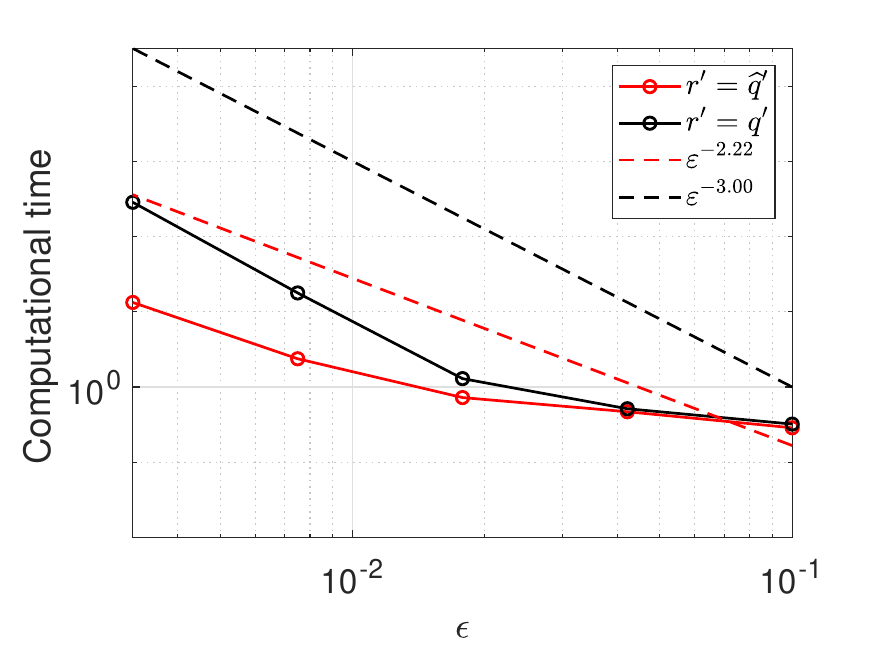}
\includegraphics[scale=0.275]{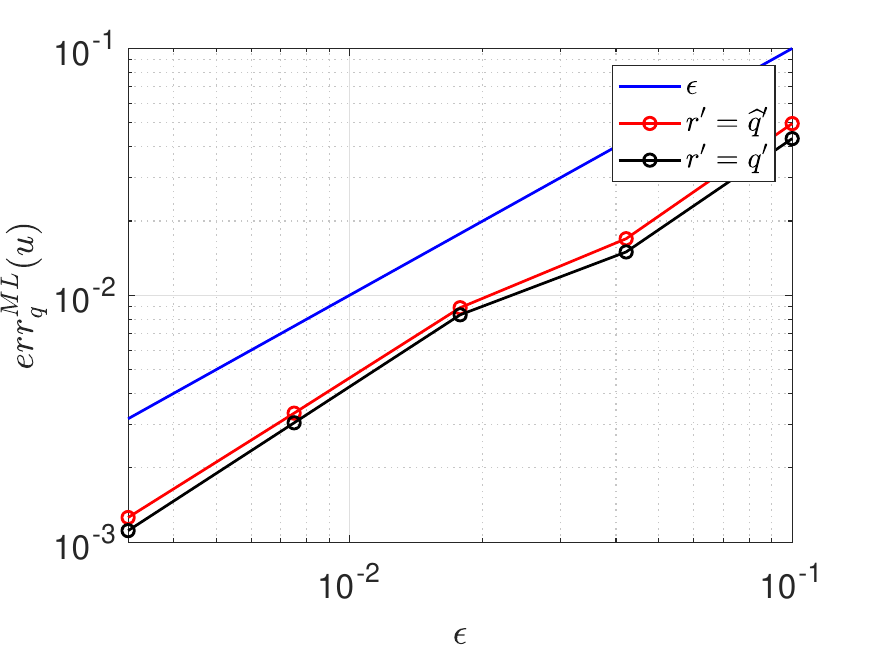}
\includegraphics[scale=0.275]{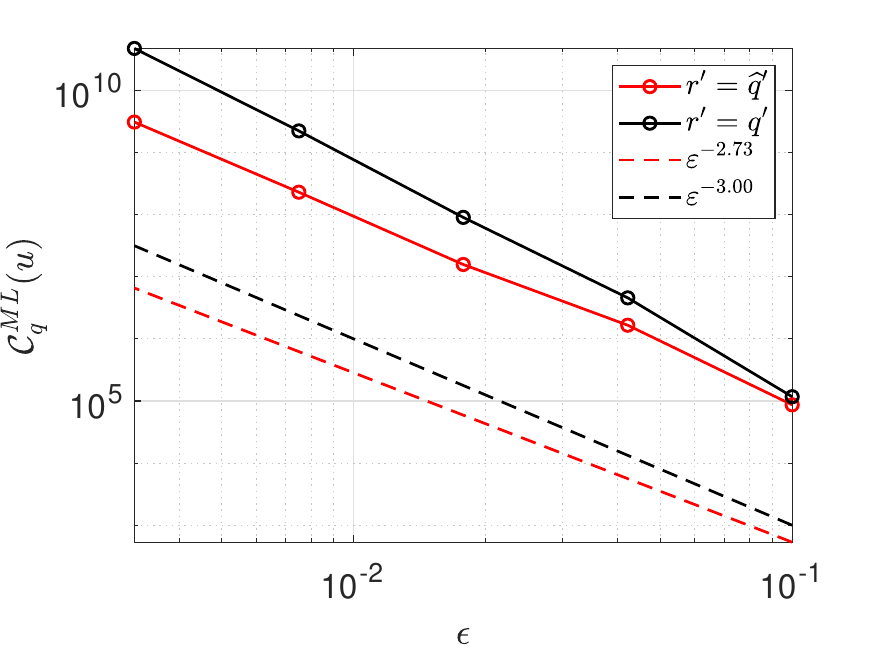}
\includegraphics[scale=0.275]{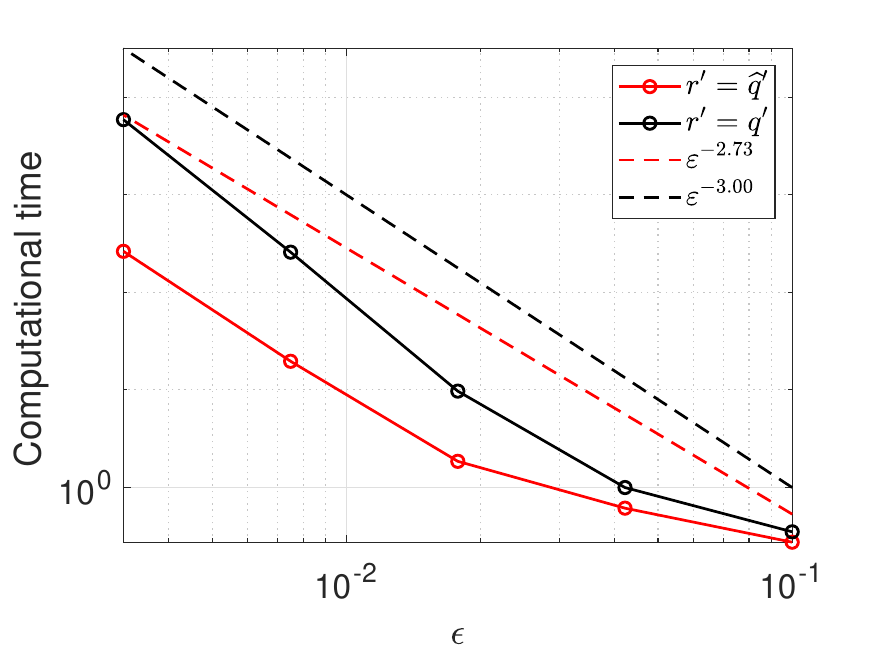}
\caption{Error decay and growth of the computational cost and computational time 
	for the multilevel Monte Carlo method to approximate the first moment 
	of $u$ in \eqref{eq:functiona_approximation} with $\eta=1.1$.}\label{Fig:MLMC}
\end{figure}

Finally, we consider the second moment $\bbM^2_p[u]$, 
defined in \eqref{eq:Lpkmoment}, as
\[
	\bbM^2_p [ u ]=\int_{\Omega} \otimes^2 u(\w)\rd\bbP(\w),
\]
whenever this integral exists (in Bochner sense) 
as an element of $\otimes_p^2 L^p(I)$. 
As discussed in Subsection~\ref{sec:prel_kmoments}, 
a sufficient condition is $u\in L^2(\Omega;L^p(I))$. 
The relations 
\[
	\phi\bigl(\bbM^2_p[ u ]\bigr)
	=
	\phi\bigl(\bbE[ u\otimes u ]\bigr)
	=
	\bbE\bigl[ \phi(u\otimes u)\bigr],
\]
where the expectation operator and $\phi$ 
(see Subsection \ref{sec:prel_kmoments}) can be exchanged 
due to the linearity and continuity of $\phi$ 
from $\otimes^2_p L^p(I)$ to $L^p(I\times I)$, 
allow us to identify $\bbM^2_p [ u ]$ with the $L^p(I\times I)$ function, 
which we still denote by $\bbM^2_p [ u ]$, 
\[
	\bbM_p^2[ u ](x,x') 
	= 
	\int_{\Omega} \bigl( (x+y(\w))(x'+y(\w)) \bigr)^{-\eta}\rd\bbP(\w).
\]
For the sake of numerical computations, 
from now we set $\eta=1$ which permits to derive, 
by direct calculations, the closed formula
\[
	\bbM_p^2 [u ](x,x')=
		\begin{cases} 
			\frac{1}{x'-x}\log\left(\frac{(x+1)x'}{(x'+1)x}\right) & \text{ if } x'\neq x, 
			\\
			\frac{1}{x}-\frac{1}{x+1} 
			& \text{ if } x'=x. 
		\end{cases}
\]
A careful analysis reveals that $\bbM_p^2 [u ]$ 
has a logarithmic singularity 
as $(x,y)$ approaches the axes $x=0$ or $y=0$, 
and 
a stronger singularity of order $\frac{1}{x}$ at the origin. 

Our first goal is to approximate $\bbM_p^2 [u ]$ 
via samples of the map 
\[
	\omega\rightarrow \frac{1}{(x+y(\omega))(x'+y(\omega))}.
\]
As emphasized in Remark~\ref{remark:gen_higher_moments}, 
the theory developed in Section~\ref{sec:Minkowski} 
characterizes the convergence of the Monte Carlo method 
both at the continuous level, and in its single- 
and multilevel variants to approximate $k$th moments. 
The convergence rates are determined by the integrability 
of $\otimes^2 u$ measured in norm $L^q(\Omega;L^p(I\times I))$ 
for $q\in (1,\infty)$ and $p\in [1,\infty)$. 
A direct calculation
shows that if $u\in L^{2q}(\Omega;L^p(I))$
then $\otimes^2 u\in L^q(\Omega;L^p(I\times I))$. 
Hence, for our model problem and for $\eta=1$, $\otimes^2 u \in L^q(\Omega;L^p(I\times I))$ 
provided that $\frac{1}{2q}+\frac{1}{p}>1$.
Table \ref{tab:FA2_CMC2} reports the estimated rates for the Monte Carlo error,
\begin{align*}
		\mathrm{err}^{2,\sf SL}_{q}(u)
		&=
		\biggl\|\bbM_p^2 [u ]-\frac{1}{M_j}\sum_{j=1}^{M_j} 
		 \otimes^2 u_j \biggr\|_{L^q(\Omega;L^p(I\times I))}
		 \\
		&\approx 
		\Biggl( \frac{1}{K}\sum_{k=1}^K \biggl\|\bbM_p^2 [u ]-\frac{1}{M_j}\sum_{j=1}^{M_j} 	
		 \otimes ^2 u(\w^k_j) \biggr\|^q_{L^p(I\times I)} \Biggr)^{\nicefrac{1}{q}} \!, 
\end{align*}
where $K=10^3$ and the sample sizes $\left\{M_j\right\}_{j=1}^6$ 
are equispaced on a logarithmic scale
between $10^2$ and $10^4$. 
To better capture the singularity at the origin, 
the $L^p(I\times I)$-norm is numerically evaluated with a two-dimensional, 
composite Gauss--Legendre quadrature formula on the rectangles 
$\widetilde{I}_h^j\times \widetilde{I}_h^i$, $i,j\in\left\{1,\dots,N_h-1\right\}$, 
where
$\widetilde{I}_{h}^j:=[\widetilde{x}_j,\widetilde{x}_{j+1}]$, 
with $\Bigl\{\widetilde{x}_k:= \bigl(\frac{k}{N_h}\bigr)^2\Bigr\}_{k=0}^{N_h}$. 
We denote by $\cTt_h$ the one-dimensional partitions 
induced by the points 
$\{\widetilde{x}_k \}_{k=0}^{N_h}$.

Table~\ref{tab:FA2_CMC2} confirms that the Monte Carlo convergence rates are determined by 
${ \widehat{q}:=\sup\bigl\{r\in[q,2]: \frac{1}{2r}+\frac{1}{p}>1\bigr\}}$ 
and not by the Rademacher type $p$, 
verifying Corollary~\ref{Cor:Gen_MC_Lpspaces} 
and Remark~\ref{remark:extra_integrability}, 
and in agreement with Table~\ref{tab:FA_CMC2} for the first moment.

\begin{table}
    \centering
    \begin{tabular}{|c|c|c|c|c|c|}
        \hline
        \diagbox{$p$}{$q$} & 1.1 & 1.3 & 1.5 & 1.7 & 2 \\
        \hline
         1 &  0.47 (0.50) & 0.47 (0.50) & 0.48 (0.50) & 0.46 (0.50) & 0.48 (0.50)\\
		\hline         
         1.3  & 0.40 (0.50) & 0.40 (0.50) &  0.42 (0.50)& 0.44 (0.50) & 0.42 (0.50)\\
         \hline 
         1.5  & 0.28 (0.33) & 0.42 (0.33) & - & -  &  -\\
         \hline
         1.7  &  0.21 (0.18) & - & - &  -  &  -\\
         \hline
    \end{tabular}
    \caption{
	    	For $\eta=1$ and different values of $p$ and $q$, 
	    	the numerically estimated 
	    	Monte Carlo convergence rates   
	    	and theoretical ones $1-\frac{1}{\widehat{q}}$ (in brackets) 
	    	in the norm of $L^q(\Omega;L^p(I\times I))$ 
	    	for the approximation of the second moment 
	    	of $u$ in \eqref{eq:functiona_approximation}.  
	    	The symbol ``$-$'' means that no rate was estimated 
	    	since $(p,q)$ violates the condition $\frac{1}{2q}+\frac{1}{p}>1$.
    		}\label{tab:FA2_CMC2}
\end{table}

Next, let $\cIt_h^0\from C^0(I)\rightarrow S^0(I,\cTt_h)$ 
be the piecewise constant interpolator such that, for any $v\in C^0(I)$,
\[
	\bigl( \cIt_h^0 v \bigr)\big|_{\cIt^j_h}
	:=
	v\left(\tfrac{\widetilde{x}_j+\widetilde{x}_{j+1}}{2}\right),
\]

and $\cIt_h^{2,0}\from C^0(I\times  I)\rightarrow S^{0,0}(I\times I,\cTt_h\times \cTt_h)$ 
be the tensorized interpolant such that, for any $v\in C^0(I\times I)$, 
\[
	\bigl( \cIt_h^{2,0} (v) \bigr)\big|_{\cIt^j_h\times \cIt^i_h} 
	:= 
	\bigl( (\cIt_h^{0} \otimes \cIt_h^{0}) v \bigr)\big|_{\cIt^j_h\times \cIt^i_h}
	=
	v\left(\tfrac{\widetilde{x}_j+\widetilde{x}_{j+1}}{2}, \tfrac{\widetilde{x}_i+\widetilde{x}_{i+1}}{2}\right).
\] 

Figure~\ref{Fig:MC_FA2} shows the decay of the single-level Monte Carlo error
to estimate $\bbM_p^2 [u ]$ via samples 
of the map $\omega\rightarrow  \cIt_h^{2,0}\bigl( \otimes^2 u(\w) \bigr)$. 
Once more, scaling the sample sizes according to the sharper error bound of 
Corollary~\ref{Cor:Gen_MC_Lpspaces} permits to have a linear decay 
of the error with respect to the decreasing tolerances.

\begin{figure}
\includegraphics[scale=0.28]{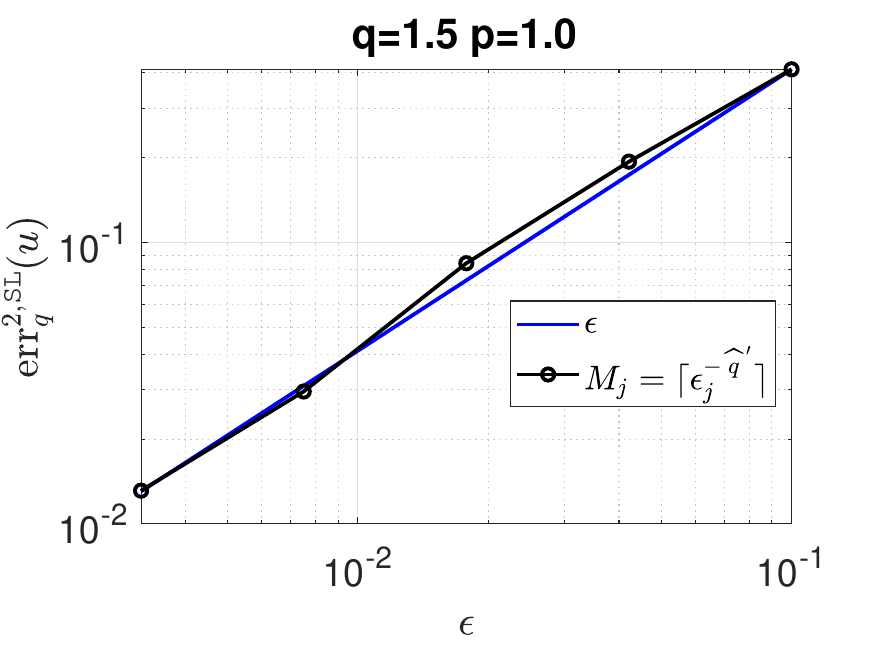}
\includegraphics[scale=0.28]{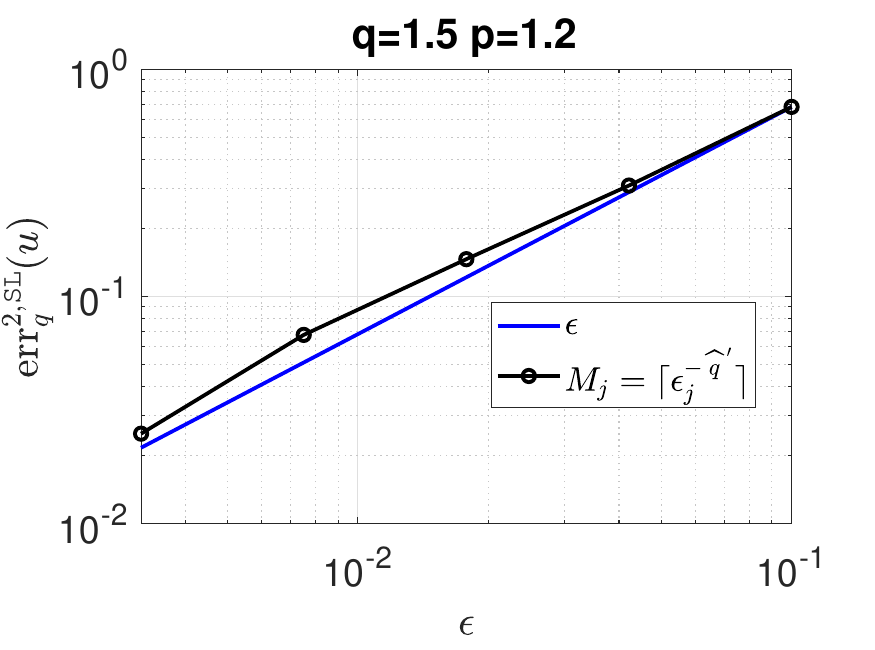}
\includegraphics[scale=0.28]{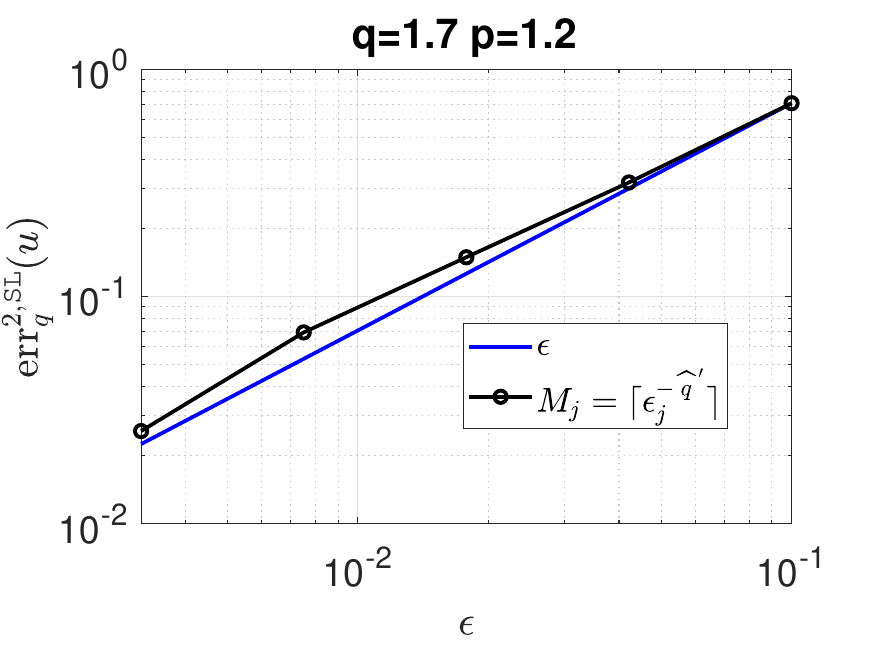}
\caption{
	Decay of the single-level Monte Carlo error 
	measured in the norm of 
	$L^q(\Omega;L^p(I\times I))$ 
	for the second moment 
	of $u$ in~\eqref{eq:functiona_approximation}.   
	The sample sizes are scaled according to $\widehat{q}$.}\label{Fig:MC_FA2}
\end{figure} 

To conclude, we present results for the multilevel Monte Carlo method. 
The setting of the numerical experiments is 
the same as described for the first moment. 
The parameters $\alpha,C_{\alpha},b_0,b_1$, 
and $\beta(r),C_{\beta}(r)$ for $r\in [\bar{q},\widehat{q}]$, 
are numerically estimated via least squares, 
and $\gamma=2$ since the cost per sample is dominated 
by the assembly of the tensor product. 
Figure~\ref{Fig:MLMC_FA2} shows the error decay 
as well as the growth of the theoretical computational cost 
and of the computational time for a sequence of decreasing tolerances, 
and for $q=1.5$ and $p=1$. 
We have $t=2.56$ and are in the third case of 
Corollary~\ref{Cor:MLMC_Optimal_Lpspaces}. 
Inserting the numerical estimated values 
of the parameters into \eqref{eq:Corollary_cC_LqLp}, 
we derive the asymptotic optimal computational cost of the multilevel estimator
$\mathcal{C}^{\sf ML}_q(X)\sim\epsilon^{-2.59}$, 
which improves over the non-optimized cost 
$\mathcal{C}^{\sf ML}_{q;q}(X)\sim\epsilon^{-3}$, 
obtained by using the non-optimal choice, but standard prior to our analysis, that is $r=q'=3$.

\begin{figure}
\includegraphics[scale=0.275]{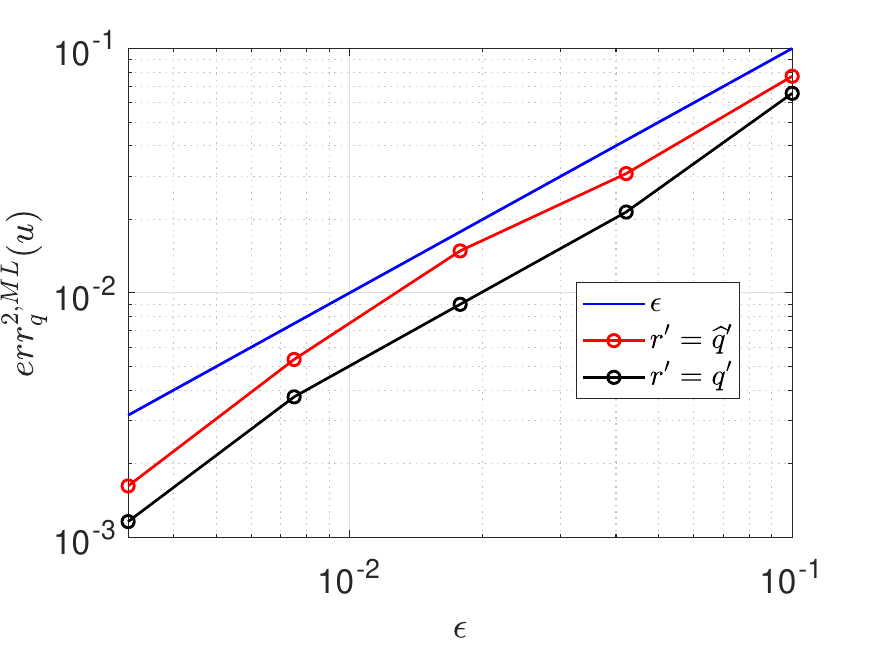}
\includegraphics[scale=0.275]{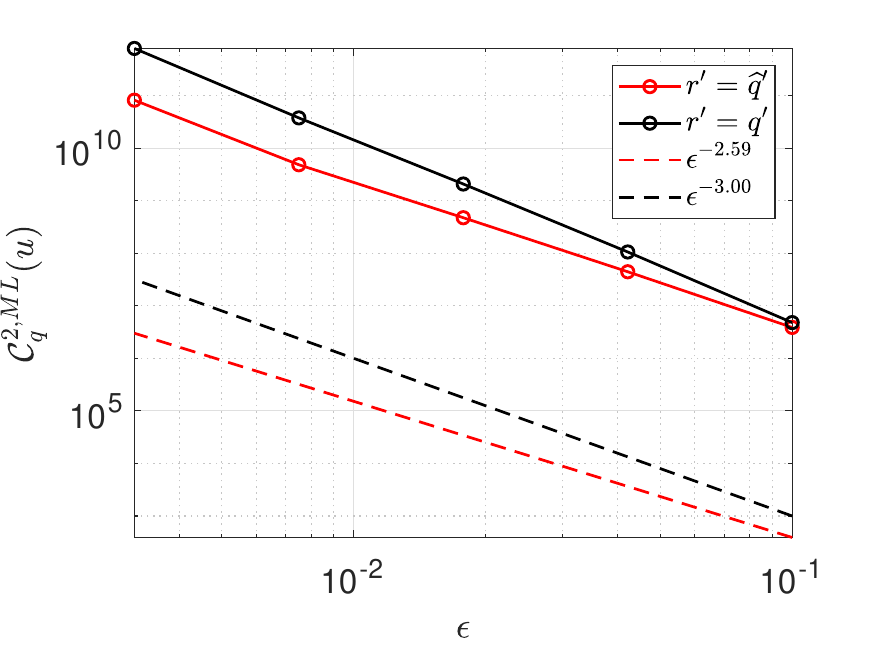}
\includegraphics[scale=0.275]{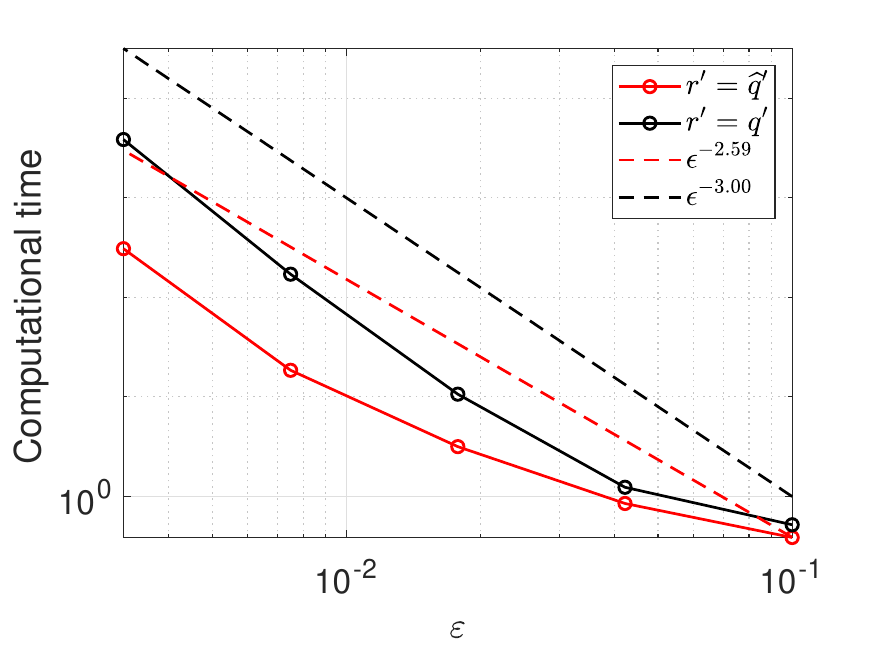}
\caption{
	Error decay and growth of the computational cost and computational time 
	for the multilevel Monte Carlo method to approximate the second moment 
	of $u$ in \eqref{eq:functiona_approximation} with $\eta=1$.} 
\label{Fig:MLMC_FA2}
\end{figure} 

\section{Conclusion}
\label{sec:Concl}
We investigated the convergence of single- and multilevel Monte Carlo methods 
for random variables with finite $k$th moments 
in Banach spaces $E$ of Rademacher type $p\in [1,2]$. 
We showed, theoretically and numerically, 
that Monte Carlo sample sizes to reach a prescribed target tolerance 
$\epsilon \in (0,1]$ must in general account for the Rademacher type of $E$.
In favorable cases, we have also shown that 
the theoretical convergence rate $1-\frac{1}{\min\left\{p,q\right\}}$ 
can be overly pessimistic for actual numerical computations 
due to the finite dimension of the approximation subspaces.
The present findings have implications for multilevel strategies 
in numerical statistical estimation 
for solutions of nonlinear PDEs 
with random data which are well-posed 
in non-Hilbertian Banach spaces
of Rademacher type $p\in [1,2)$ as, e.g., 
degenerate convection-diffusion equations \cite{koley2017multilevel} 
or compressible fluid flows \cite{lions1996mathematical}.

Future efforts are aimed at extending the present framework to anisotropic $k$-fold correlations 
of the form $\bbE[ X_1\otimes\cdots\otimes X_k]$, 
where $X_1,\dots,X_k$ may belong to different Banach spaces $E_1,\dots,E_k$ 
of different Rademacher types $p_1,\dots,p_k\in [1,2]$, and to the analysis of sparse estimators.

\appendix 

\section{Computation of the injective norm}\label{appendix:Injective_norm}
We here detail an algorithm to numerically approximate 
the injective tensor norm $\|\cdot\|_{\otimes_\varepsilon^2 W^{1,p}_{\left\{0\right\}}(I)}$, for $p\in (1,\infty)$. 
Recall that for a $U\in \tensorW$ with representation $U=\sum_{j=1}^M u_{j,1}\otimes u_{j,2}$, 
the injective tensor norm is defined as
\[
	\|U\|_{\tensorW}:=\sup\Biggl\{\biggl|\sum_{j=1}^M f_1(u_{j,1})f_2(u_{j,2})\biggr| : f_1,f_2 \in B_{\Wdual}\Biggr\},
\]
$B_{\Wdual}$ being the unit ball in $\Wdual$. 
Due to \cite[Proposition 8.14]{brezis2011}, 
for any $f\in \Wdual$ there exists an element $l\in L^{p'\!}(I)$ such that
\[
	f(u)=\int_I u'(x)l(x)\;\rd x=:l(u),\;\; \forall u\in W^{1,p}_{\left\{0\right\}}(I),
	\quad\text{and}\quad\|f\|_{\Wdual}=\|l\|_{L^{p'\!}(I)}.
\]
Hence, the injective tensor norm can be equivalently reformulated as
\begin{equation}\label{eq:equivalent_injective_norm}
	\begin{aligned}
		&\|U\|_{\tensorW}=\sup\Biggl\{\biggl|\sum_{j=1}^M l_1(u_{j,1})l_2(u_{j,2})\biggr| : 
		l_1,l_2 \in B_{{L^{p'\!}(I)}}\Biggr\}
		\\
		&=\sup\Biggl\{\biggl|\sum_{j=1}^M l_1(u_{j,1})l_2(u_{j,2})\biggr| : 
		l_1,l_2 \in {L^{p'\!}(I)} \text{ s.t.}\ \|l_1\|_{L^{p'\!}(I)}\leq 1,\;\|l_2\|_{L^{p'\!}(I)}\leq 1\Biggr\}.
	\end{aligned}
\end{equation}
Our procedure consists in a finite dimensional approximation of $l_1$ and $l_2$ 
in order to relate the computation of the supremum in \eqref{eq:equivalent_injective_norm} 
to a finite dimensional optimization problem. 
The latter is then solved using an alternating procedure.

Let $\mathcal{T}_h$ be a partition of $I$ (see Section \ref{sec:num_rbvp}), 
and denote the space of continuous, piecewise polynomials of order $k\geq 0$ by
\[
	S^k(I,\mathcal{T}_h)
	:=
	\bigl\{v\in C^0(\overline{I}):\;
	 v|_{I_h^j}\in \bbP_k,\;j\in \left\{0,\dots,N_h-1\right\}\bigr\}. 
\]
Although it is possible to consider any polynomial order, 
in our numerical experiments we set $k=1$. 
Furthermore, let $\Nht$ denote the dimension of $S^1(I,\mathcal{T}_h)$ 
and $\mathcal{B}:=\left\{\varphi_i\right\}_{i=1}^{\Nht}$ 
be the nodal basis for $S^1(I,\mathcal{T}_h)$. 
For any $l\in S^1(I,\mathcal{T}_h)$, ${\lb=(l_i)_{i=1}^{\Nht}\in \mathbb{R}^{\Nht}}$ 
is the vector expressing the coefficients of $l$ 
with respect to the basis $\mathcal{B}$. 

Next, we approximate the supremum in \eqref{eq:equivalent_injective_norm} by restricting $l_1$ and $l_2$ to $S^1(I,\mathcal{T}_h)$, leading to the maximization problem,  
\begin{equation}\label{eq:maximation}
\begin{aligned}
&\max_{l_1,l_2\in S^1(I,\mathcal{T}_h)} \biggl|\sum_{j=1}^M l_1(u_{j,1})l_2(u_{j,2})\biggr|\\
&\text{s.t.}\quad \|l_1\|_{L^{p'\!}(I)}\leq 1,\quad \|l_2\|_{L^{p'\!}(I)}\leq 1.
\end{aligned}
\end{equation}   
Problem \eqref{eq:maximation} is amenable to numerical solution, 
using numerical routines to approximate the norm on $L^{p'\!}(I)$,  
and observing that for any $l\in S^1(I,\mathcal{T}_h)$ and $u\in W^{1,p}_{\{0 \}}(I)$, 
$l(u)=\lb^\top \cb(u)$ where $\cb(u)_i= \int_I u^\prime \varphi_i$, 
$i\in\bigl\{1,\dots,\Nht \bigr\}$. 
The computational cost could however be prohibitive.
A favorable setting arises when $U=\sum_{j=1}^M u^h_{j,1}\otimes u^h_{j,2}$, 
with $u^h_{j,i}\in S^1_0(I,\mathcal{T}_h)\subset S^1(I,\mathcal{T}_h)$, 
$i=1,2$, $j\in \{1,\dots,M \}$, as it is the case in Subsection~\ref{sec:num_rbvp}. 
Indeed, for any $u\in S^1(I,\mathcal{T}_h)$ and $l\in S^1(I,\mathcal{T}_h)$, $l(u)=\lb^\top H \ub$, 
where $H\in \mathbb{R}^{\Nht\times \Nht}$ with $(H)_{i,j}:=\int_{I} \varphi_i\varphi^\prime_j$. 
Denoting with $U_M$ the matrix representation of $U$ into the tensorized basis $\mathcal{B}\otimes\mathcal{B}$, 
we then have
\[
	\sum_{j=1}^M l_1(u_{j,1})l_2(u_{j,2})=\lb_1^\top H U_M H^\top \lb_2.
\]
Next, for any $l\in S^1(I,\mathcal{T}_h)$ we approximate $\|l\|_{L^{p'\!}(I)}$ using a trapezoidal rule,
\[
	\|l\|^{p'}_{L^{p'}(I)}
	\approx \sum_{i=0}^{N_h-1} \frac{|l(x_i)|^{p'}+|l(x_{i+1})|^{p'\!}}{2}\, h
	=\sum_{i=1}^{\Nht} |l_i|^{p'} d_i^{p'} 
	= \|D \lb\|_{\ell^{p'}_{\Nht}}^{p'} 
	=:\| \lb\|^{p'}_{\ell^{p'}_{\Nht\!,D}} \!,
\] 
where $D=\operatorname{diag}(d_1,\dots,d_{\Nht})$, 
with $d_1=d_{\Nht}=\bigl(\tfrac{h}{2}\bigr)^{1/p'}$ 
and $d_i=h^{1/p'}\!$, for any $i\in\{ 2,\dots,\Nht-1\}$. 
Any quadrature formula can be used in what follows, 
provided that it leads to a positive diagonal matrix $D$.

We are thus led to consider the optimization problem
\begin{equation}\label{eq:maximation_2}
	\begin{aligned}
		&\max_{\lb_1,\lb_2\in \mathbb{R}^{\Nht}} |\lb_1^\top H U_M H^\top \lb_2|\\
		&\text{s.t.}\quad\| \lb_1\|_{\ell^{p'}_{\Nht,D}}\!\leq 1,
		\quad \| \lb_2\|_{\ell^{p'}_{\Nht,D}}\!\leq 1,
\end{aligned}
\end{equation}   
To solve \eqref{eq:maximation_2} we consider an iterative alternating procedure. 
Starting from two initial guesses $\lb_1^0,\lb_2^0$ satisfying the constraints, 
at each iteration $k\geq 1$ we solve separately,
\begin{equation}\label{eq:maximation_3}
	\begin{array}{cccc}
		&\max\limits_{\lb_1\in \mathbb{R}^{\Nht}} |\lb_1^\top \hb_2^{k-1}|\qquad 
		& \qquad 
		&\max\limits_{\lb_2\in \mathbb{R}^{\Nht}} |\lb_2^\top \hb_1^{k}|
		\\
		&\text{s.t.}\quad \| \lb_1\|_{\ell^{p'}_{\Nht,D}}\! \leq 1, &   
		&\text{s.t.}\quad \| \lb_2\|_{\ell^{p'}_{\Nht,D}}\! \leq 1,
	\end{array}
\end{equation}   
with $\hb_i^{k}:=H U_M H^\top \lb_i^k$, $i=1,2$, 
and $(\lb_1^k,\lb_2^k)$ denote the approximated maximizers computed at iteration $k$.
Each subproblem in \eqref{eq:maximation_3} consists in the maximization of the absolute value of a linear function over the unit ball of a weighted $\ell^{p'}_{\Nht,D}$ space. A close formula is available. 
Indeed, for a general problem 
\begin{gather*} 
		\max\limits_{\lb\in \mathbb{R}^{\Nht}} |\lb^\top \cb|\\
	    \text{s.t.}\quad \| \lb\|_{\ell^{p'}_{\Nht,D}}=1,  
\end{gather*}
we remark that  
\[
	|\lb^\top \cb|
	=
	\biggl|\sum_{i=1}^{\Nht} d_il_i\tfrac{1}{d_i}c_i\biggr|
	\leq 
	\|\lb\|_{\ell^{p'}_{\Nht,D}}\|\cb\|_{\ell^{p}_{\Nht,D^{-1}}}
	=\|\cb\|_{\ell^{p}_{\Nht,D^{-1}}},
\]
hence the objective functional is bounded from above by $\|\cb\|_{\ell^{p}_{\Nht,D^{-1}}}$. 
By direct construction, the vector $\lb^\star$ of components
\begin{equation}\label{eq:optimal_sol}
	l^\star_i
	=
	\frac{\operatorname{sign}(c_i)|c_i|^{p-1}}{d_i^p}
	\frac{1}{\left(\sum_{i=1}^{\Nht} d_i^{p'(1-p)} |c_i|^{p'(p-1)}\right)^{1/p'}},
\end{equation}
$i\in\{1,\dots,\Nht\}$, satisfies $\|\lb^\star\|_{\ell^{p'}_{\Nht,D}}=1$ 
and $(\lb^\star) ^\top \cb=\|\cb\|_{\ell^{p}_{\Nht,D^{-1}}}$, 
and thus is the sought maximizer. 
The overall procedure to approximate the injective norm is summarized in Algorithm 1.

\begin{algorithm}
\setlength{\columnwidth}{\linewidth}
\caption{Approximation of $\|U\|_{\otimes_\varepsilon^2 W^{1,p}_{\left\{0\right\}}(I)}$}
\begin{algorithmic}[1]\label{alg:Injective}
\State \textbf{Input}: $U_M$, $H$, $\lb_1^0$, $\lb_2^0$, max\_it, Tol.
\State $\lb_j^0=\lb_j^0/\|\lb^0_j\|_{\ell_{p',D}}$, $j=1,2$. (Normalize)
\State Set $k=1$, $\text{Err}=1$, $f^0_{\text{opt}}=(\lb_1^0)^\top H U_M H^\top \lb_2^0$.
\While{$k\leq \text{max\_it}$ and $\text{Err}>Tol$}
\State Compute $\lb_1^k$ solving the first subproblem in \eqref{eq:maximation_3} using \eqref{eq:optimal_sol}.
\State Compute $\lb_2^k$ solving the second subproblem in \eqref{eq:maximation_3} using \eqref{eq:optimal_sol}.
\State Compute $f^k_{\text{opt}}=|(\lb^k_1)^\top H U_M H^\top \lb^k_2|$.
\State Set $\text{Err}=|f^k_{\text{opt}}-f^{k-1}_{\text{opt}}|$, $k=k+1$.
\EndWhile
\State \textbf{Output}: $f^{k-1}_{\text{opt}}$.
\end{algorithmic}
\end{algorithm}

\section*{Funding}

KK acknowledges support of the research project 
\emph{Efficient spatiotemporal statistical
modelling with stochastic PDEs} 
(with project number VI.Veni.212.021) by the talent
program \emph{Veni} 
which is financed by the Dutch Research Council (NWO). 
TV has been partially supported by the INdAM-GNCS project {\em GNCS 2026 - CUP\_E53C25002010001}.
\section*{Acknowledgments} 

KK acknowledges helpful comments on 
Minkowski's integral inequality 
by Mark Veraar. 
TV is member of the INdAM-GNCS group.

\end{document}